\newcommand{\PP}{\mathbb{P}}
\newcommand{\XX}{\mathbb{X}}
\newcommand{\wx}{\widehat{x}}
\numberwithin{equation}{section}
    \newtheorem{theo}{Theorem}\numberwithin{theo}{section}
    \newtheorem{coro}[theo]{Corollary}
    \newtheorem{prop}[theo]{Proposition}
    \newtheorem{lemm}[theo]{Lemma}
        \def\N{\mathbb{N}}
    \def\E{\mathbb{E}}
    \def\0{{\bf 0}}
    \renewcommand{\E}{\mathbb E \,}
    \newcommand{\MM}{\mathbb{M}}
    \newcommand{\Cov}{{\rm Cov}}
    \newcommand{\Var}{{\rm Var}}
    \def\bdm{\begin{displaymath}}
    \newcommand{\edm}{\end{displaymath}}
    \def\benu{\begin{enumerate}}
    \def\eenu{\end{enumerate}}
    \def\beqn{\begin{equation}}
    \def\eeqn{\end{equation}}
    \def\be{\begin{equation}}
    \def\ee{\end{equation}}
    \def\bea{\begin{eqnarray}}
    \def\eea{\end{eqnarray}}
    \newcommand{\bean}{\begin{eqnarray*}}
    \newcommand{\eean}{\end{eqnarray*}}
    \newcommand{\bear}{\begin{eqnarray}}
    \newcommand{\eear}{\end{eqnarray}}
    \def\R{\mathbb{R}}
		\def\dint{\textup{d}}
    \def\la{{\lambda}}
    \def\qed{\hfill\hbox{${\vcenter{\vbox{
        \hrule height 0.4pt\hbox{\vrule width 0.4pt height 6pt
        \kern5pt\vrule width 0.4pt}\hrule height 0.4pt}}}$}}
\titleformat*{\section}{\normalfont\large\bfseries}
\titleformat*{\subsection}{\normalfont\bfseries}
\date{\vspace{-0.95cm}}
\begin{document}

\title{Multivariate second order Poincar\'e inequalities for Poisson functionals}

\author{Matthias Schulte\footnotemark[1] \ \ and \ J. E. Yukich\footnotemark[2]}

\maketitle

\footnotetext[1]{University of Bern, Switzerland,
    matthias.schulte@stat.unibe.ch, supported by the SNF grant 186049}
\footnotetext[2]{Lehigh University,
    United States of America, jey0@lehigh.edu, supported in part by  NSF grant  DMS-1406410, a Simons Collaboration Grant, and SNF grant 186049}


\begin{abstract}

Given a vector $F =(F_1,\hdots,F_m)$ of Poisson functionals $F_1,\hdots,F_m$, we investigate the proximity between $F$ and
an $m$-dimensional centered Gaussian random vector $N_\Sigma$ with covariance matrix $\Sigma\in\R^{m\times m}$. Apart from finding proximity bounds for the $d_2$- and $d_3$-distances, based on classes of smooth test functions, we obtain proximity bounds for the $d_{convex}$-distance, based on the less tractable test functions comprised of indicators of convex sets. The bounds for all three distances are shown to be of the same order, which is presumably optimal. The bounds are multivariate counterparts of the univariate second order Poincar\'e inequalities and, as such, are expressed in terms of integrated moments of first and second order difference operators. The derived second order Poincar\'e inequalities for indicators of convex sets are made possible by a new bound on the second derivatives of the solution to the Stein equation for the multivariate normal distribution.
We present applications to the multivariate normal approximation of first order Poisson integrals and of statistics of Boolean models.

\vskip6pt
\noindent {\bf Key words and phrases:} Stein's method; multivariate normal approximation; second order Poincar\'e inequality; Malliavin calculus; smoothing, Poisson process

\vskip6pt
\noindent {\bf AMS 2010 Subject Classification:} Primary 60F05; Secondary 60D05

\end{abstract}

\section{Introduction and main results}

\subsection{Overview}\label{subsec:overview}

Roughly speaking, a first order Poincar\'e inequality for a random variable $F$ measures the closeness of $F$ to its mean. A second order Poincar\'e inequality \cite{Chatterjee} measures the closeness of $F$ to a Gaussian random variable, where distance is given by some specified metric on the space of distribution functions.  The paper \cite{LPS} establishes second order Poincar\'e inequalities for Poisson functionals $F$, with bounds given in terms of integrated moments of first and second order difference operators, which are an outcome of the research on the Malliavin-Stein method for Poisson functionals in the recent years; see, for example, \cite{EichelsbacherThaele2014,PSTU,Schulte2016} and the book \cite{PeccatiReitzner}. The bounds from \cite{LPS} can be usefully applied to yield rates of normal convergence for various functionals of Poisson processes, including those represented as a sum of stabilizing score functions \cite{LSY}.  The rates are presumably optimal as they coincide with rates of convergence in the classical central limit theorem.

The goal of this paper is to establish second order Poincar\'e inequalities for Poisson functionals
in the multivariate setting, providing multivariate counterparts to the univariate results of \cite{LPS}. The proofs combine Malliavin calculus on Poisson spaces with Stein's method of multivariate normal approximation. Optimal rates of normal convergence depend on good bounds on the terms occurring in a certain smoothing lemma. A main contribution of this paper is to provide such bounds via a new estimate on the second derivatives of the solution to the Stein equation for the multivariate normal distribution, which could be helpful for the multivariate normal approximation of other types of random vectors as well and, thus, might be of independent interest.  It is shown that this approach yields the same (presumably optimal) rates of multivariate normal convergence for the $d_{convex}$-distance based on non-smooth test functions as well as for the $d_2$- and $d_3$-distances based on smooth test functions (see Subsection \ref{sec:MainResults} for definitions of the distances).

We start by making our terms precise and recalling the univariate set-up. Let $\eta$ be a Poisson process over a measurable space $(\mathbb{X},  \mathcal{F})$ with a $\sigma$-finite intensity measure $\lambda$ (see e.g.\ \cite{LastPenroseBook} for more details on Poisson processes). One can think of $\eta$ as a random element in the space $\mathbf{N}$ of all $\sigma$-finite counting measures equipped with the $\sigma$-field generated by the mappings $\nu\mapsto \nu(A)$, $A\in \mathcal{F}$. We call a random variable $F$ a Poisson functional if there is a measurable map $f:\mathbf{N}\to\R$ such that $F=f(\eta)$ almost surely. For such a Poisson functional $F$ the difference operator is given by
\begin{equation}\label{eqn:DifferenceOperator}
D_xF :=
f(\eta+\delta_x)-f(\eta), \quad x\in\mathbb{X},
\end{equation}
where $\delta_x$ denotes the Dirac measure of $x$. We say that $F$ belongs to the domain of the difference operator, i.e., $F\in\operatorname{dom} D$, if $\E F^2<\infty$ and
\begin{equation}\label{eqn:DomD}
\int_{\mathbb{X}} \E(D_xF)^2 \, \lambda(\dint x)<\infty.
\end{equation}
Iterating the definition of the difference operator, one obtains
$$
D^2_{x_1,x_2} F:=D_{x_1}(D_{x_2}F) = f(\eta+\delta_{x_1}+\delta_{x_2})-f(\eta+\delta_{x_1})-f(\eta+\delta_{x_2})+f(\eta), \quad x_1,x_2\in\mathbb{X}.
$$

It is natural to investigate the proximity between
the distribution of $F$ and that of a standard Gaussian
random variable $N$. To compare two random variables $Y$ and $Z$ or, more precisely, their distributions, one can use the Kolmogorov distance
\begin{equation}\label{eqn:DefinitiondK}
d_K(Y,Z):=\sup_{u\in\R} |\mathbb{P}(Y\leq u) - \mathbb{P}(Z\leq u)|,
\end{equation}
which is the supremum norm of the difference of the distribution functions of $Y$ and $Z$, or the Wasserstein distance
$$
d_W(Y,Z):= \sup_{h\in\operatorname{Lip}(1)} |\E h(Y) - \E h(Z)|,
$$
where $\operatorname{Lip}(1)$ stands for the set of functions $h: \R\to\R$ with Lipschitz constant at most one. Note that the $d_K$-distance is always defined, while the $d_W$-distance requires finiteness of $\E |Y|$ and $\E|Z|$.

When $F\in\operatorname{dom} D$, $\E F=0$, and $\Var F=1$, the main results of \cite{LPS} establish the inequalities
\begin{equation}\label{eqn:PoincaredW}
d_W(F,N) \leq \tau_1 + \tau_2 + \tau_3
\end{equation}
and
\begin{equation}\label{eqn:PoincaredK}
d_K(F,N) \leq \tau_1 + \tau_2 + \tau_3 + \tau_4 + \tau_5 + \tau_6,
\end{equation}
where $\tau_1,\hdots,\tau_6$ are integrals over moments involving only $DF$ and $D^2F$ (see Subsection 1.2 in \cite{LPS} for exact formulas).
The proximity bounds  \eqref{eqn:PoincaredW} and \eqref{eqn:PoincaredK}, whose proofs rely on previous Malliavin-Stein bounds in \cite{PSTU} and \cite{EichelsbacherThaele2014,Schulte2016}, respectively, are second order Poincar\'e inequalities, as
 described in \cite{LPS}. The reason for this name is that the `first order' Poincar\'e inequality
$$
\Var F \leq \int_{\mathbb{X}} \E (D_xF)^2 \, \lambda(\dint x)
$$
for $F\in\operatorname{dom} D$ bounds the variance in terms of the first difference operator, whereas the first and the second difference operator control the closeness to Gaussianity in \eqref{eqn:PoincaredW} and \eqref{eqn:PoincaredK}. The term second order Poincar\'e inequality was coined in \cite{Chatterjee} in a similar Gaussian framework, where one has the first two derivatives instead of the first two difference operators.

For many Poisson functionals $F$ the second order Poincar\'e inequalities \eqref{eqn:PoincaredW} and \eqref{eqn:PoincaredK} may be evaluated since the first two difference operators have a clear interpretation via the operation of adding additional points. This is the advantage of these findings over  Malliavin-Stein bounds for normal approximation of Poisson functionals which either require the knowledge of the chaos expansion of $F$ (see, for example, \cite{EichelsbacherThaele2014,HLS2016,PSTU,Schulte2016}) or which involve bounds expressed in terms of gradient operators and conditional expectations as in \cite{PT}.

 Inequality \eqref{eqn:PoincaredK} yields rates of normal approximation for some classic problems in stochastic geometry and some non-linear functionals of Poisson-shot-noise processes \cite{LPS}, as well as for functionals of convex hulls of random samples in a smooth convex body, statistics of nearest neighbors graphs, the number of maximal points in a random sample, and estimators of surface area and volume arising in set approximation \cite{LSY}. The rates of convergence for these examples are presumably optimal.

Often one is not only interested in the behavior of a single Poisson functional but in that of a vector $F=(F_1,\hdots,F_m)$ of Poisson functionals $F_1,\hdots,F_m$ with $m\in\N$. In this situation, one can compare $F$ with an $m$-dimensional centered Gaussian random vector $N_\Sigma$ with covariance matrix $\Sigma\in\R^{m\times m}$. We are not only interested in the weak convergence of the vector $F$ of Poisson functionals to a limit random vector $N_{\Sigma}$, which can be deduced from the univariate case by the Cramer-Wold technique, but in quantitative bounds for the proximity between $F$ and $N_\Sigma$. In other words, we seek the multivariate counterparts of \eqref{eqn:PoincaredW} and \eqref{eqn:PoincaredK}.

In this paper $F$ and $N_\Sigma$ are compared with respect to distances based on smooth and non-smooth test functions. One of our main achievements is to show that for each distance, the bounds are of the same, presumably optimal, order. In general, it is more intricate to deal with non-smooth test functions when one uses Stein's method for multivariate normal approximation. For some bounds for smooth test functions having the same order as in the univariate case we refer to \cite[Chapter 12]{CGS} and the references therein. For non-smooth test functions, even obtaining the rate $n^{-1/2}$ in the classical central limit theorem for sums of $n$ i.i.d.\ random vectors via Stein's method is challenging  \cite{BhattacharyaHolmes2010, Goetze1991}. The abstract multivariate normal approximation results in terms of the dependence structure in \cite{RR} and \cite[Chapter 12]{CGS} and in terms of exchangeable pairs in \cite{ReinertRoellin2009} contain at least additional logarithmic factors compared to what one would expect from the case of smooth test functions or from the univariate case. Recently, these logarithms were removed in \cite{Fang} and \cite{FangRoellin} (see also \cite{FangPhD}), using the dependence structure and Stein couplings, respectively. However, it seems that none of these findings could be applied to systematically achieve the normal approximation bounds for Poisson functionals given by our main results.

\subsection{Statement of main results}\label{sec:MainResults}

Let us now give a precise formulation of our results. We start with distances defined in terms of smooth test functions, namely the
$d_2$- and the $d_3$-distances. Let $\mathcal{H}_m^{(2)}$ be the set of all $C^2$-functions $h: \R^m \to \R$ such that
$$
|h(x)-h(y)|\leq \|x-y\|, \quad x,y\in\R^m, \quad \text{and} \quad \sup_{x\in\R^m} \|\operatorname{Hess} h(x)\|_{op}\leq 1,
$$
where $\operatorname{Hess} h$ denotes the Hessian matrix of $h$ and $\|\cdot\|_{op}$ stands for the operator norm of a matrix. By $\mathcal{H}_m^{(3)}$ we denote the class of all $C^3$-functions $h: \R^m \to\R$ such that the absolute values of the second and third partial derivatives are bounded by one. Using this notation, we define, for $m$-dimensional random vectors $Y$ and $Z$,
$$
d_2(Y,Z):=\sup_{h\in\mathcal{H}^{(2)}_m} |\E h(Y) - \E h(Z)|
$$
if $\E\|Y\|,\E\|Z\|<\infty$ and
$$
d_3(Y,Z):=\sup_{h\in\mathcal{H}^{(3)}_m} |\E h(Y) - \E h(Z)|
$$
if $\E\|Y\|^2,\E\|Z\|^2<\infty$.

The paper \cite{PSTU} was the first to combine Stein's method and the Malliavin calculus to obtain normal approximation of Poisson functionals. In \cite{PeccatiZheng}, the univariate main result of \cite{PSTU} for the $d_W$-distance is extended to vectors of Poisson functionals and the $d_2$- and the $d_3$-distances are considered. Evaluating these multivariate Malliavin-Stein bounds in the same way one evaluates in \cite{LPS} the univariate bounds from \cite{PSTU} and \cite{EichelsbacherThaele2014,Schulte2016} to derive \eqref{eqn:PoincaredW} and \eqref{eqn:PoincaredK}, one obtains the following multivariate second order Poincar\'e inequalities.

\begin{theo}\label{thm:GeneralMultivariateBound}
Let $F=(F_1,\hdots,F_m)$, $m\in\N$, be a vector of Poisson functionals $F_1,\hdots,F_m\\ \in\operatorname{dom}D$ with $\E F_i=0$, $i\in\{1,\hdots,m\}$. Define
\begin{align*}
\gamma_1 & :=  \bigg( \sum_{i,j=1}^m \int_{\mathbb{X}^3} \big( \E (D^2_{x_1,x_3}F_i)^2 (D^2_{x_2,x_3}F_i)^2 \big)^{1/2}\\
& \hskip 2.5cm  \times \big( \E (D_{x_1}F_j)^2 (D_{x_2}F_j)^2 \big)^{1/2} \lambda^3(\dint(x_1,x_2,x_3)) \bigg)^{1/2}   \allowdisplaybreaks\\
\gamma_2 & :=  \bigg(  \sum_{i,j=1}^m \int_{\mathbb{X}^3} \big( \E (D^2_{x_1,x_3}F_i)^2 (D^2_{x_2,x_3}F_i)^2 \big)^{1/2}\\
& \hskip 2.5cm  \times \big( \E (D^2_{x_1,x_3}F_j)^2 (D^2_{x_2,x_3}F_j)^2 \big)^{1/2} \, \lambda^3(\dint(x_1,x_2,x_3))\bigg)^{1/2} \allowdisplaybreaks\\
\gamma_3 & := \sum_{i=1}^m \int_{\mathbb{X}} \E |D_xF_i|^3 \, \lambda(\dint x)
\end{align*}
and let $\Sigma=(\sigma_{ij})_{i,j\in\{1,\hdots,m\}}\in\R^{m\times m}$ be positive semi-definite. Then
\begin{equation} \label{GMB}
d_3(F,N_{\Sigma}) \leq \frac{m}{2} \sum_{i,j=1}^m |\sigma_{ij}-\Cov(F_i,F_j)|
 + m  \gamma_1 + \frac{m}{2} \gamma_2 + \frac{m^2}{4} \gamma_3.
\end{equation}
If, additionally, $\Sigma$ is positive definite, then
\begin{equation} \label{GMBd2}
\begin{split}
d_2(F,N_{\Sigma}) & \leq \|\Sigma^{-1}\|_{op} \|\Sigma\|_{op}^{1/2}  \sum_{i,j=1}^m |\sigma_{ij}-\Cov(F_i,F_j)|
 + 2\|\Sigma^{-1}\|_{op} \|\Sigma\|_{op}^{1/2}  \gamma_1\\
& \quad  + \|\Sigma^{-1}\|_{op} \|\Sigma\|_{op}^{1/2} \gamma_2 + \frac{\sqrt{2\pi} m^2 }{8} \|\Sigma^{-1}\|_{op}^{3/2} \|\Sigma\|_{op} \gamma_3.
\end{split}
\end{equation}
\end{theo}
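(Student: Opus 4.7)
\medskip
\noindent\textbf{Proof plan.} The natural starting point is the multivariate Malliavin--Stein bound for Poisson functionals obtained in \cite{PeccatiZheng}: for centered $F_1,\hdots,F_m\in\operatorname{dom}D$ and positive semi-definite $\Sigma$, one has
\begin{equation*}
d_3(F,N_\Sigma) \le \frac{m}{2}\sum_{i,j=1}^m \E\Bigl|\sigma_{ij}-\langle DF_i,-DL^{-1}F_j\rangle_{L^2(\lambda)}\Bigr| + \frac{m^2}{4}\gamma_3,
\end{equation*}
with an analogous statement for $d_2$ when $\Sigma$ is positive definite, the prefactors $m/2$ and $m^2/4$ being replaced by $\|\Sigma^{-1}\|_{op}\|\Sigma\|_{op}^{1/2}$ and $\tfrac{\sqrt{2\pi}m^2}{8}\|\Sigma^{-1}\|_{op}^{3/2}\|\Sigma\|_{op}$. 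Here $L^{-1}$ denotes the pseudo-inverse of the Ornstein--Uhlenbeck generator on Poisson space. The cubic $\gamma_3$-term already matches the corresponding contribution in \eqref{GMB} and \eqref{GMBd2}, so the only real task is to control the first sum.

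Using the identity $\E\langle DF_i,-DL^{-1}F_j\rangle_{L^2(\lambda)}=\Cov(F_i,F_j)$ together with Jensen's inequality, I would split
\begin{equation*}
\E\Bigl|\sigma_{ij}-\langle DF_i,-DL^{-1}F_j\rangle_{L^2(\lambda)}\Bigr|
\le |\sigma_{ij}-\Cov(F_i,F_j)| + \sqrt{\Var\langle DF_i,-DL^{-1}F_j\rangle_{L^2(\lambda)}}.
\end{equation*}
The first summand reproduces the $|\sigma_{ij}-\Cov(F_i,F_j)|$-contribution in \eqref{GMB} and \eqref{GMBd2}, and the proof then reduces to controlling the variance term by (the squares of) $\gamma_1$ and $\gamma_2$.

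This last step is the technical heart of the argument and is the multivariate counterpart of the key variance estimate in \cite{LPS}. The plan is to apply the first-order Poincar\'e inequality to the Poisson functional $G_{ij}:=\langle DF_i,-DL^{-1}F_j\rangle_{L^2(\lambda)}$, obtaining
\begin{equation*}
\Var G_{ij} \le \int_{\mathbb{X}}\E(D_zG_{ij})^2\,\lambda(\dint z),
\end{equation*}
and then to expand $D_z[D_xF_i\cdot(-D_xL^{-1}F_j)]$ via the Leibniz rule for the difference operator. The action of $D_z$ on $-D_xL^{-1}F_j$ is controlled via the Mehler representation of $L^{-1}$ combined with contractivity of the Ornstein--Uhlenbeck semigroup, which bounds it in $L^2$ in terms of $D^2_{x,z}F_j$. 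A careful Cauchy--Schwarz grouping then leaves fourfold expectations in which each factor is of the form $(D_\cdot F_k)^2$ or $(D^2_{\cdot,\cdot}F_k)^2$; the cross terms reassemble into $\gamma_1^2$ and the purely second-order terms into $\gamma_2^2$. Substituting back produces \eqref{GMB} and, with the altered prefactors, also \eqref{GMBd2}. The main obstacle is organising this expansion so that the constants in front of $\gamma_1$ and $\gamma_2$ come out sharp, i.e.\ without incurring spurious factors of $m$ from a triangle inequality applied too early, before summation over $i,j$; keeping track of the three Leibniz summands (with one, the other, or both factors being differenced) and pairing them correctly under Cauchy--Schwarz is what dictates the precise numerical constants in \eqref{GMB} and \eqref{GMBd2}.
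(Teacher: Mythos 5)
Your strategy — start from the multivariate Malliavin--Stein bound of \cite{PeccatiZheng}, peel off the deterministic covariance defect $|\sigma_{ij}-\Cov(F_i,F_j)|$, and control the remaining ``stochastic covariance'' term by estimating the variance of $\langle DF_i,-DL^{-1}F_j\rangle_{L^2(\lambda)}$ via the first-order Poincar\'e inequality, Mehler's formula, and the Leibniz rule — is indeed what the paper does, except that the variance estimate you sketch is not rederived there: it is exactly Proposition~4.1 of \cite{LPS}, which the paper simply invokes as its Proposition~\ref{prop:LPS}(b). So the proof reduces to two citations (Proposition~\ref{prop:PeccatiZheng} and Proposition~\ref{prop:LPS}) and bookkeeping, and your ``technical heart'' is precisely the content of the cited lemma.

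There is, however, a genuine imprecision in your starting point that would derail the constants. You write the Peccati--Zheng bound with $\sum_{i,j}\E\bigl|\sigma_{ij}-\langle DF_i,-DL^{-1}F_j\rangle\bigr|$, i.e.\ a sum of $L^1$-norms. The form actually used (Proposition~\ref{prop:PeccatiZheng}) has
$\beta_1 := \bigl(\sum_{i,j}\E(\sigma_{ij}-\langle DF_i,-DL^{-1}F_j\rangle)^2\bigr)^{1/2}$,
a Frobenius-type norm with the square root \emph{outside} the double sum. This distinction is not cosmetic. The paper applies the triangle inequality at the level of this matrix norm — $\beta_1\le\bigl(\sum_{ij}(\sigma_{ij}-\Cov(F_i,F_j))^2\bigr)^{1/2}+\bigl(\sum_{ij}\Var\langle DF_i,-DL^{-1}F_j\rangle\bigr)^{1/2}$ — and then $\sum_{ij}\Var(\cdots)\le4\gamma_1^2+\gamma_2^2$ gives $2\gamma_1+\gamma_2$ with no extra powers of $m$. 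If you instead start from $\sum_{ij}\E|\cdots|$, split termwise, and then reassemble $\sum_{ij}\sqrt{\Var G_{ij}}$ into $\gamma_1,\gamma_2$, Cauchy--Schwarz over the $m^2$ index pairs costs a factor $m$, yielding $m^2\gamma_1+\tfrac{m^2}{2}\gamma_2$ rather than $m\gamma_1+\tfrac{m}{2}\gamma_2$. You explicitly flag ``spurious factors of $m$ from a triangle inequality applied too early'' as the hazard, but your stated starting bound has already committed that error: the summation sits outside the $\E|\cdot|$. To match \eqref{GMB}/\eqref{GMBd2} you must work with the Frobenius form of $\beta_1$ throughout. (The cubic term is fine: the paper's $\beta_2$ still carries $DL^{-1}F_j$ and is reduced to $m^2\gamma_3$ by H\"older plus Proposition~\ref{prop:LPS}(a) — a step you implicitly absorbed into the starting bound, which should be spelled out.)
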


Note that $\gamma_1$, $\gamma_2$, and $\gamma_3$ have a structure similar to that of $\tau_1$, $\tau_2$, and $\tau_3$ in \eqref{eqn:PoincaredW} and \eqref{eqn:PoincaredK} and coincide with them up to some constant factors for $m=1$.

Let us now compare Theorem \ref{thm:GeneralMultivariateBound} with related results in the literature. The bounds in \cite{PeccatiZheng} are formulated in terms of the difference operator $D$ and the inverse Ornstein-Uhlenbeck generator $L^{-1}$ and do not, in general, readily lend themselves to off-the-shelf use.  In contrast, bounds such as \eqref{GMB} and \eqref{GMBd2} involving only difference operators are often tractable, as seen in our applications section and also in the companion paper \cite{SchulteYukich2017}.
 Theorem 8.1 of \cite{HLS2016} provides a bound on $d_3(F,N_{\Sigma})$, which relies on the findings of \cite{PeccatiZheng}, though this bound requires knowledge of the entire Wiener-It\^{o} chaos expansion for each of the components of $F$ and consequently may also  be less useful than \eqref{GMB}.  When the components of $F$ belong to a special class of Poisson $U$-statistics, which admit a finite chaos expansion with explicitly known kernels, the paper \cite{LPST} uses the results of \cite{PeccatiZheng} to establish bounds for the $d_3$-distance between $F$ and a Gaussian random vector. In \cite{BourguinPeccati}, the findings from \cite{PeccatiZheng} are generalized by comparing a vector of Poisson functionals with a random vector composed of Gaussian and Poisson random variables.

The paper \cite{KrokowskiThaele2017} derives multivariate second order Poincar\'e inequalities for functionals of Rademacher sequences. The considered $d_4$-distance is based on test functions such that the $\sup$-norms of the first four partial derivatives are bounded by one.

To some extent \eqref{GMB} and \eqref{GMBd2} can be seen as multivariate counterparts of \eqref{eqn:PoincaredW}. Indeed, as is the case with $d_W$, the distances $d_2$ and $d_3$ are based on continuous test functions, although the exact definitions involving $C^2$- and $C^3$-functions are distinct from the multivariate Wasserstein distance obtained by using  test functions $h: \R^m\to\R$ having Lipschitz constants at most one.

The Kolmogorov distance \eqref{eqn:DefinitiondK} is arguably  more interesting than the Wasserstein distance (and the $d_2$- and the $d_3$-distances for $m=1$), as it has a clearer interpretation as the supremum norm of the difference of the distribution functions, though it is often harder to deal with because the underlying test functions are discontinuous.
The straightforward multivariate analog to the univariate Kolmogorov distance for two $m$-dimensional random vectors $Y=(Y_1,\hdots,Y_m)$ and $Z=(Z_1,\hdots,Z_m)$ would be
\begin{equation}\label{eqn:SupMulti}
d_K(Y,Z):=\sup_{u_1,\hdots,u_m\in\R}|\mathbb{P}(Y_1\leq u_1,\hdots,Y_m\leq u_m)-\mathbb{P}(Z_1\leq u_1,\hdots,Z_m\leq u_m)|,
\end{equation}
which is again the supremum norm of the difference of the distribution functions of $Y$ and $Z$. In \eqref{eqn:SupMulti} one only takes into account rectangular solids aligned with coordinate planes, so that for a rotation $A\in\R^{m\times m}$ the distance between $AY$ and $AZ$ could be different from the distance between $Y$ and $Z$. Although convergence in the distance given in \eqref{eqn:SupMulti} still implies weak convergence, one would like to have invariance under rotation. To resolve this issue, one considers the following standard multivariate counterpart to the Kolmogorov distance \eqref{eqn:DefinitiondK}, defined for $m$-dimensional random vectors $Y$ and $Z$ \text{by}
$$
d_{convex}(Y,Z):=\sup_{h\in\mathcal{I}_m} |\E h(Y) - \E h(Z)|,
$$
where $\mathcal{I}_m$ is the set of all indicator functions of measurable convex sets in $\R^m$.

For a vector $F=(F_1,\hdots,F_m)$, $m\in\N$, of Poisson functionals $F_1,\hdots,F_m\in\operatorname{dom} D$ with $\E F_i=0$, $i\in\{1,\hdots,m\}$, we use the abbreviations  $D_xF:=(D_xF_1,\hdots,D_xF_m)$ for $x\in\mathbb{X}$, $D^2_{x,y}F:=(D^2_{x,y}F_1,\hdots,D^2_{x,y}F_m)$ for $x,y\in\mathbb{X}$, and
\begin{align*}
\gamma_4 & :=\bigg(\sum_{i,j=1}^m  \int_{\mathbb{X}} \E (D_xF_i)^4 \, \lambda(\dint x) + 6 \int_{\mathbb{X}^2} \big(\E (D^2_{x,y}F_i)^4\big)^{1/2}  \big(\E (D_xF_j)^4\big)^{1/2}  \, \lambda^2(\dint (x,y)) \\
& \hskip 1.75cm + 3 \int_{\mathbb{X}^2}  \big(\E (D^2_{x,y}F_i)^4\big)^{1/2} \big(\E (D^2_{x,y}F_j)^4 \big)^{1/2}  \, \lambda^2(\dint (x,y)) \bigg)^{1/2} \allowdisplaybreaks\\
\gamma_5 & := \bigg(3 \sum_{i,j=1}^m \int_{\mathbb{X}^3} \big( \E \mathbf{1}\{D^2_{x_1,y}F\neq \0, D^2_{x_2,y}F\neq \0\} \big(\|D_{x_1}F\|+\|D^2_{x_1,y}F\|\big)^{3/4}\\
& \hskip 3cm \times \big(\|D_{x_2}F\|+\|D^2_{x_2,y}F\|\big)^{3/4}  |D_{x_1}F_i|^{3/2} \, |D_{x_2}F_i|^{3/2}\big)^{2/3}\\
& \hskip 2.5cm \times \big( \E |D_{x_1}F_j|^3 |D_{x_2}F_j|^3 \big)^{1/3}  \, \lambda^3(\dint(x_1,x_2,y)) \\
& \quad \quad + \sum_{i,j=1}^m  \int_{\mathbb{X}^3} \big( \E \big( \|D_{x_1}F\|+\|D^2_{x_1,y}F\|\big)^{3/2} \big( \|D_{x_2}F\|+\|D^2_{x_2,y}F\|\big)^{3/2} \big)^{1/3}  \\
& \hskip 3cm \times \bigg( \frac{45}{2} \big(\E |D^2_{x_1,y}F_i|^3 |D^2_{x_2,y}F_i|^3 \big)^{1/3} \big(\E |D_{x_1}F_j|^3 |D_{x_2}F_j)^3 \big)^{1/3} \\
& \hskip 3.75cm  +  \frac{9}{2} \big(\E |D^2_{x_1,y}F_i|^3 \, |D^2_{x_2,y}F_i|^3 \big)^{1/3} \big(\E |D^2_{x_1,y}F_j|^3 |D^2_{x_2,y}F_j|^3 \big)^{1/3} \bigg) \\
& \hskip 4cm \lambda^3(\dint(x_1,x_2,y)) \bigg)^{1/3} \allowdisplaybreaks \\
\gamma_6 & := \bigg(3 \sum_{i,j=1}^m \int_{\mathbb{X}^3} \big( \E \mathbf{1}\{D^2_{x_1,y}F\neq \0, D^2_{x_2,y}F\neq \0\} \big(\|D_{x_1}F\|^2+\|D^2_{x_1,y}F\|^2\big)^{3/4}\\
& \hskip 3cm \times \big(\|D_{x_2}F\|^2+\|D^2_{x_2,y}F\|^2\big)^{3/4}  |D_{x_1}F_i|^{3/2} \, |D_{x_2}F_i|^{3/2}\big)^{2/3}\\
& \hskip 2.5cm \times \big( \E |D_{x_1}F_j|^3 |D_{x_2}F_j|^3 \big)^{1/3}  \, \lambda^3(\dint(x_1,x_2,y)) \\
& \quad \quad + \sum_{i,j=1}^m  \int_{\mathbb{X}^3} \big( \E \big( \|D_{x_1}F\|^2+\|D^2_{x_1,y}F\|^2\big)^{3/2} \big( \|D_{x_2}F\|^2+\|D^2_{x_2,y}F\|^2\big)^{3/2} \big)^{1/3}  \\
& \hskip 3cm \times \bigg( \frac{135}{8} \big(\E |D^2_{x_1,y}F_i|^3 |D^2_{x_2,y}F_i|^3 \big)^{1/3} \big(\E |D_{x_1}F_j|^3 |D_{x_2}F_j|^3 \big)^{1/3} \\
& \hskip 3.5cm  + \frac{27}{8} \big(\E |D^2_{x_1,y}F_i|^3 \, |D^2_{x_2,y}F_i|^3 \big)^{1/3} \big(\E |D^2_{x_1,y}F_j|^3 |D^2_{x_2,y}F_j|^3 \big)^{1/3} \bigg) \\
& \hskip 4cm \lambda^3(\dint(x_1,x_2,y))\bigg)^{1/4} ,
\end{align*}
where $\0$ stands for the origin in $\R^m$.

The following multivariate second order Poincar\'e inequality  for the $d_{convex}$-distance
constitutes our main finding. The inequality is the multivariate counterpart to the bound for the Kolmogorov distance at \eqref{eqn:PoincaredK} established in \cite{LPS} and it closely resembles
those for the $d_2$- and $d_3$-distances at \eqref{GMB} and \eqref{GMBd2}. For a positive definite matrix $\Sigma\in\mathbb{R}^{m \times m}$ let $\Sigma^{1/2}$ be the positive definite matrix in $\R^{m\times m}$ such that $\Sigma^{1/2}\Sigma^{1/2}=\Sigma$ and let $\Sigma^{-1/2}:=(\Sigma^{1/2})^{-1}$.

\begin{theo}\label{thm:Generaldconvex}
Let $F=(F_1,\hdots,F_m)$, $m\in\N$, be a vector of Poisson functionals $F_1,\hdots,F_m\\ \in\operatorname{dom} D$ with $\E F_i=0$, $i\in\{1,\hdots,m\}$, and let $\Sigma=(\sigma_{ij})_{i,j\in\{1,\hdots,m\}}\in\R^{m\times m}$ be positive definite. Then
\begin{equation} \label{GMBdHl}
\begin{split}
d_{convex}(F,N_\Sigma) & \leq 941 m^5 \max\{\|\Sigma^{-1/2}\|_{op}, \|\Sigma^{-1/2}\|_{op}^{3}\} \\
& \quad \times \max\bigg\{\sum_{i,j=1}^m |\sigma_{ij}-\Cov(F_i,F_j)|, \gamma_1,\gamma_2, \gamma_3, \gamma_4, \gamma_5, \gamma_6\bigg\}
\end{split}
\end{equation}
with $\gamma_1$, $\gamma_2$, and $\gamma_3$ as in Theorem \ref{thm:GeneralMultivariateBound} and with
$\gamma_4$, $\gamma_5$, and $\gamma_6$ defined as above.
\end{theo}

Several existing results for the multivariate normal approximation of general random vectors in the $d_{convex}$-distance or generalizations of it \cite{CGS,Fang,FangRoellin,RR} all require some almost sure boundedness assumptions; in our set-up this would amount to requiring that $|D_xF_i|$ is almost surely bounded for $x\in\mathbb{X}$ and $i\in\{1,\hdots,m\}$. One of the main achievements of Theorem \ref{thm:Generaldconvex} is that no such assumption is required. For results without almost sure boundedness assumption we refer to \cite[Chapter 3]{FangPhD} and, with weaker rates of convergence, to \cite[Corollary 3.1]{ReinertRoellin2009}.

A second main achievement of  Theorem \ref{thm:Generaldconvex} is that there are no logarithmic terms in the bound \eqref{GMBdHl} (see the discussion at the end of Subsection \ref{subsec:overview}).
The Malliavin-Stein method is used in \cite{NPR} to establish bounds in the $d_W$-distance for the multivariate normal approximation of functionals of Gaussian processes. In \cite{KimPark}, a similar bound with  an additional logarithm is derived for the $d_{convex}$-distance. As with Theorem \ref{thm:Generaldconvex}, the latter result does not require any boundedness assumptions. 
  Moreover, we expect that one can use our proof technique to remove the logarithm from the result in \cite{KimPark}. For a subclass of functionals of Gaussian processes,  namely multiple Wiener-It\^o integrals, one may even establish rates of multivariate normal approximation with respect to the total variation distance \cite{NourdinPeccatiSwan}. This bound also involves additional logarithmic factors and its proof relies on controlling the relative entropy, an approach which differs from Stein's method.

Clearly, if the random vector $N_{\Sigma}$ is replaced by a normal random vector whose covariance matrix consists of entries $\Cov(F_i, F_j)$, then the term $\sum_{i,j=1}^m |\sigma_{ij}- \Cov(F_i,F_j)|$ in the  bounds of our main theorems disappears.

 In Theorem \ref{thm:Generaldconvex} we require that the covariance matrix $\Sigma$ of the approximating Gaussian random vector $N_\Sigma$ is positive definite. Otherwise, $N_{\Sigma}$ would be concentrated on some lower-dimensional linear subspace of $\R^m$. If now $F$ were to belong to any given lower dimensional subspace of $\R^m$ with probability zero, then we would have $d_{convex}(F,N_{\Sigma})\geq 1$. In such situations, one could have weak convergence without convergence in $d_{convex}$.

\subsection{Examples and applications}

At first sight, the bounds in our general results appear unwieldy. However for many functionals of interest, we may readily bound the integrated moments of difference operators and the terms $\gamma_1,\hdots,\gamma_6$ simplify. We illustrate this by four examples, which indicate that our bounds yield presumably optimal rates of convergence.

We start with the following analog to the classical central limit theorem for sums of i.i.d.\ random vectors, where we consider the sum of a Poisson distributed number of i.i.d.\ random vectors.  Here, as in Theorems \ref{thm:GeneralMultivariateBound} and \ref{thm:Generaldconvex}, we implicitly assume that the normal approximation bounds all involve finite quantities, as otherwise there is nothing to prove.  The proof of the following result is postponed to Subsection \ref{subsec:I1}.

\begin{coro}\label{cor:AnalogCLT}
Given  a Poisson distributed random variable $Y$ with mean $s>0$ and a sequence of i.i.d.\ centered random vectors $(X_n)_{n\in\N}$ in $\R^m$, which are independent of $Y$, define
$$
Z_s:=\frac{1}{\sqrt{s}}\sum_{n=1}^Y X_n \quad  \text{and} \quad \Sigma:=(\Cov(X_1^{(i)},X_1^{(j)}))_{i,j\in\{1,\hdots,m\}}.
$$
\begin{itemize}
\item [(a)] It is the case that
$$
d_3(Z_s,N_\Sigma) \leq \frac{m^2}{4} \sum_{i=1}^m \E |X_1^{(i)}|^3 \frac{1}{\sqrt{s}}.
$$
\item [(b)] When $\Sigma$ is positive definite we have
$$
d_2(Z_s,N_\Sigma) \leq \frac{\sqrt{2\pi}m^2}{8} \|\Sigma^{-1}\|_{op}^{3/2} \|\Sigma\|_{op} \sum_{i=1}^m \E |X_1^{(i)}|^3 \frac{1}{\sqrt{s}}.
$$
\item [(c)] When $\Sigma$ is positive definite we have
\begin{equation}
\begin{split}
d_{convex}(F,N_\Sigma) & \leq 941 m^{11/2} \max\{\|\Sigma^{-1/2}\|_{op}, \|\Sigma^{-1/2}\|_{op}^{3}\} \\
& \quad \times \max \bigg\{ \sum_{i = 1}^m \E |X^{(i)}_1|^3,  \sqrt{ \sum_{i = 1}^m \E (X^{(i)}_1)^4 } \bigg\}  \frac{1}{\sqrt{s}}.
\end{split}
\end{equation}
\end{itemize}
\end{coro}

Since one can rewrite $Z_s$ as a sum of a fixed number of i.i.d.\ random vectors, one can also apply the classical multivariate central limit theorem. In \cite{BhattacharyaHolmes2010,Goetze1991,Sazonov1968} corresponding Berry-Esseen inequalities for the $d_{convex}$-distance are derived, which provide in the case of Corollary \ref{cor:AnalogCLT} rates of convergence of the order $1/\sqrt{s}$ as well. These findings are even stronger since they require for the $d_{convex}$-distance only finite third moments, while we require finite fourth moments. The stricter assumptions in Corollary \ref{cor:AnalogCLT} might come from the fact that the proofs of the underlying results for more general Poisson functionals are not optimized for the considered special case. Since $Z_s$ is a vector of first order Poisson integrals,  Corollary \ref{cor:AnalogCLT} follows from a more general theorem in Subsection \ref{subsec:I1}, which is obtained by applying our main results to first order Poisson integrals.

As a second example we consider for fixed $m\in\N$ a family of vectors $F_s=(F_{1,s},\hdots,F_{m,s})$, $s>0$, of square integrable Poisson functionals $F_{1,s},\hdots,F_{m,s}$ with underlying Poisson processes $\eta_s$, $s>0$, having intensity measures $\mu_s$, $s>0$, of the form $\mu_s=s\mu$ with a fixed finite measure $\mu$, e.g., homogenous Poisson processes on the $d$-dimensional unit cube $[0,1]^d$ with increasing intensity. Moreover, we denote by $\Sigma_s$ the covariance matrix of $F_s$ and assume that $(\Sigma_s)_{s>0}$ converges to a matrix $\overline{\Sigma}\in\R^{m\times m}$. Under some additional assumptions on the difference operators our main results imply the following result, proved in Subsection \ref{subsec:Marks}.

\begin{coro}\label{coro:BoundedDifferenceOperator}
Let $F_s$, $s>0$, be as above and assume that $\overline{\Sigma}$ is positive definite and that there are constants $a,b,\varepsilon\in(0,\infty)$ such that, for $i\in\{1,\hdots,m\}$ and $s>0$,
\be \label{As11}
\E |D_xF_{i,s}|^{6+\varepsilon} \leq \frac{a}{s^{3+\varepsilon/2}}, \quad \mu \text{-a.e.}\ x\in \mathbb{X},
\ee
\be \label{As12}
\E |D^2_{x_1,x_2}F_{i,s}|^{6+\varepsilon}\leq \frac{a}{s^{3+\varepsilon/2}}, \quad \mu^2 \text{-a.e.}\ (x_1,x_2)\in \mathbb{X}^2,
\ee
and
\be \label{As2}
s\int_{\mathbb{\mathbb{X}}} \mathbb{P}(D^2_{x,y}F_{i,s}\neq 0)^{\frac{\varepsilon}{36+6\varepsilon}} \, \mu (\dint y)\leq b, \quad \mu \text{-a.e.}\ x \in \mathbb{X}.
\ee
Then there exist constants $s_0,C_{3},C_2, C_{convex}\in(0,\infty)$ depending on $a$, $b$, $\varepsilon$, $m$, $\mu(\mathbb{X})$, $\overline{\Sigma}$, and $(\Sigma_s)_{s>0}$ such that
$$
d_3(F_s,N_{\Sigma_s}) \leq \frac{C_3}{\sqrt{s}}, \quad d_2(F_s,N_{\Sigma_s}) \leq \frac{C_2}{\sqrt{s}}, \quad \text{and} \quad d_{convex}(F_s,N_{\Sigma_s}) \leq \frac{C_{convex}}{\sqrt{s}}
$$
for $s\geq s_0$.
\end{coro}

The rates of convergence in Corollary \ref{coro:BoundedDifferenceOperator} are of the order $s^{-1/2}$ for all distances.
The set-up of Corollary \ref{coro:BoundedDifferenceOperator}, in which one re-scales by the square root of the intensity parameter and in which the $(6+\varepsilon)$-th moments of the un-rescaled difference operators are bounded, frequently occurs in problems in stochastic geometry; see e.g. \cite{LSY,LPS}.

The third example is the situation where, before centering, the components of $F$ have representations $ s^{-1/2} \sum_{x\in\eta_{sg}\cap A_i} \xi^{(i)}_s(x,\eta_{sg})$, $i\in\{1,\hdots,m\}$, with $s\in[1,\infty)$, where $A_i$, $i\in\{1,\hdots,m\}$, are bounded subsets of $\R^d$, $\eta_{sg}$ is a Poisson process in $\R^d$ whose intensity measure has density $sg$ with respect to the Lebesgue measure,  and where $\xi^{(i)}_s$, $i\in\{1,\hdots,m\}$, are stabilizing score functions. Then the companion paper \cite{SchulteYukich2017}, which can be seen as a multivariate counterpart to some of the findings in \cite{LSY}, shows that the right-hand sides of \eqref{GMB}, \eqref{GMBd2}, and \eqref{GMBdHl} reduce to $O(\sum_{i,j=1}^m |\sigma_{ij}-\Cov(F_i,F_j)|) + O(s^{-1/2})$ under some assumptions on $(\xi_s^{(i)})_{s\geq 1}$, $i\in\{1,\hdots,m\}$, $A_i$, $i\in\{1,\hdots,m\}$, and $g$. This means that the approximation error consists of a term taking into account the difference of the covariances and a term of order $s^{-1/2}$, which also occurs in the univariate case (see \cite{LSY}). In Section 3 of \cite{SchulteYukich2017}, these findings are applied to obtain quantitative multivariate central limit theorems for statistics of $k$-nearest neighbors graphs and random geometric graphs as well as for statistics arising in topological data analysis and entropy estimation.

A fourth example concerns the intrinsic volumes of Boolean models, a prominent problem from stochastic geometry. Let $V_d(W)$ be the volume of the compact convex observation window $W\subset\mathbb{R}^d$. If one compares the vector of intrinsic volumes of the Boolean model in $W$ with a centered Gaussian random vector having exactly the same covariance matrix and if one increases the inradius of $W$, then our main results lead to the rate of normal convergence $V_d(W)^{-1/2}$; see Subsection \ref{subsec:BM}.

In the last three examples the rates of convergence $s^{-1/2}$ and $V_d(W)^{-1/2}$, respectively, are comparable to $n^{-1/2}$ in the uni- and multivariate central limit theorems for the i.i.d.\ case and, thus, presumably optimal.

Among these examples, we will consider the first order Poisson integrals generalizing the situation of Corollary \ref{cor:AnalogCLT} and the intrinsic volumes of Boolean models in more detail in Subsections \ref{subsec:I1} and \ref{subsec:BM}, while Corollary \ref{coro:BoundedDifferenceOperator} is a consequence of a theorem derived in Subsection \ref{subsec:Marks}.

\subsection{Proof techniques} \label{prooftech}

Let us now informally comment on the method of proof. The proofs of Theorems \ref{thm:GeneralMultivariateBound} and \ref{thm:Generaldconvex} are based on the Malliavin calculus on the Poisson space and Stein's method for multivariate normal approximation. In particular we apply a smoothing technique, which we discuss in this subsection. Assume we aim to compare an $m$-dimensional random vector $Y=(Y_1,\hdots,Y_m)$ with an $m$-dimensional centered Gaussian random vector $N_I$ with the identity matrix $I\in\R^{m\times m}$ as covariance matrix (we assume $\Sigma=I$ for simplicity) in terms of a measurable test function $h: \R^m\to\R$. The idea of Stein's method for multivariate normal approximation (see e.g.\ \cite{CGS, Goetze1991}) is now to use the identity
$$
\E h(Y) - \E h(N_I) = \E \sum_{i=1}^m Y_i \frac{\partial f_h}{\partial y_i}(Y) - \frac{\partial^2f_h}{\partial y_i^2}(Y) ,
$$
where $f_h: \R^m \to \R$ is a solution of the multivariate Stein equation
\begin{equation}\label{eqn:MultivariateSteinEquation}
\sum_{i=1}^m y_i \frac{\partial f}{\partial y_i}(y) - \frac{\partial^2f}{\partial y_i^2}(y) = h(y) - \E h(N_I), \quad y\in\R^m.
\end{equation}
Under some smoothness assumptions on $h$ one can give formulas for $f_h$ (see, for example, Lemma 2.6 in \cite{CGS}). However for non-smooth $h$ such as indicator functions of convex sets, it appears unclear how to deal with $f_h$. This problem is resolved by considering instead of $h$ some smoothed $C^\infty$ version $h_{t,I}$ of $h$, which depends on a smoothing parameter $t\in(0,1)$. Of course one makes some error by replacing the test functions defining the  $d_{convex}$-distance by their smoothed versions, but a smoothing lemma allows us to bound this error by some constant multiple of $\sqrt{t}$.

Thus it remains to find upper bounds for $|\E h_{t,I}(Y) - \E h_{t,I}(N_I)|$ as a function of $t\in(0,1)$. We sketch how this goes as follows.
Given $h: \R^m\to\R$ measurable and bounded and $t\in(0,1)$ we introduce the smoothed function
\be \label{smoothh}
h_{t,I}(y):= \int_{\R^m} h(\sqrt{t} z +\sqrt{1-t}y) \, \varphi_I(z) \, \dint z, \quad y\in\R^m,
\ee
where $\varphi_I$ denotes the density of $N_I$. The function $f_{t,h,I}: \R^m\to\R$ given by
\be \label{Steinsol}
f_{t,h,I}(y):= \frac{1}{2} \int_t^1 \frac{1}{1-s} \int_{\R^m} (h(\sqrt{s}z+\sqrt{1-s}y)-h(z)) \, \varphi_I(z) \, \dint z\, \dint s, \quad y\in\R^m,
\ee
is a solution of the Stein equation \eqref{eqn:MultivariateSteinEquation} with $h$ replaced by $h_{t,I}$; see \cite[p.\ 726]{Goetze1991} and \cite[p.\ 337]{CGS}.  Moreover, when $\|h\|_\infty := \sup_{x \in \R^m} |h(x)|   \leq 1$, it follows  (see e.g. the first display on p.\ 1498 in \cite{PeccatiZheng}) that, for a vector $F=(F_1,\hdots,F_m)$, $m\in\N$, of Poisson functionals $F_1,\hdots,F_m\in\operatorname{dom} D$ with $\E F_i=0$, $i\in\{1,\hdots,m\}$,
$$
|\E h_{t,I}(F) - \E h_{t,I}(N_{I})|
= \bigg|\sum_{i=1}^m \E\frac{\partial^2 f_{t,h,I}}{\partial y_i^2}(F) - \sum_{k=1}^m \E \int_{\mathbb{X}} D_x \frac{\partial f_{t,h,I}}{\partial y_k}(F) (-D_xL^{-1}F_k) \, \lambda(\dint x)\bigg|,
$$
where $D_x$ is the difference operator given in \eqref{eqn:DifferenceOperator} and $L^{-1}$ is the inverse Ornstein-Uhlenbeck generator defined in the Appendix. A main idea behind the proof of Theorem  \ref{thm:Generaldconvex} is to show that the bound
for the right-hand side of the above involves $$\sqrt{\sum_{i,j=1}^m \E \bigg( \frac{\partial^2 f_{t,h,I}}{\partial y_i\partial y_j}(F)\bigg)^2}$$ and then to use
\begin{equation}\label{eqn:BoundPartialDerivatives}
\sup_{h\in\mathcal{I}_m } \E \sum_{i,j=1}^m \bigg( \frac{\partial^2 f_{t,h, I}}{\partial y_i \partial y_j}(F) \bigg)^2 \leq   M_2 (\log t)^2 d_{convex}(F,N_I) + 530 m^{17/6}
\end{equation}
for all $t\in(0,1)$ and $i,j\in\{1,\hdots,m\}$ where $M_2\leq m^2$. By choosing $t$ appropriately we may deduce Theorem \ref{thm:Generaldconvex}. The inequality \eqref{eqn:BoundPartialDerivatives} is not restricted to a vector $F$ of Poisson functionals, but holds for arbitrary random vectors $Y$ in $\R^m$, as described in Proposition \ref{lem:BoundD2F}. Thus, we expect that it might be helpful for other applications of Stein's method for multivariate normal approximation.

In our main results we provide explicit constants, which are sometimes very large. In part, this is caused by some generous estimates in our proofs, used to obtain relatively short bounds valid for all choices of $m$  and to simplify the proofs. We expect that one could obtain better constants for many instances if one goes back to our proofs and uses the particular stucture of the functionals and the choice of $m$.

\subsection{Structure of the paper}

This paper is organized as follows. The next section provides a smoothing lemma and bounds on solutions of the multivariate Stein equation,  including the afore-mentioned Proposition \ref{lem:BoundD2F}. Section \ref{sec:Proofs}, which draws on the auxiliary results of Section \ref{sec:SteinsMethod}, is devoted to the proofs of our main results. Section \ref{sec:Applications} deals with the application of our findings to first order Poisson integrals and intrinsic volumes of Boolean models. Moreover, we further evaluate our results for the case of marked Poisson processes - a result which will be used in the companion paper \cite{SchulteYukich2017}. In the Appendix we recall the definitions of the Malliavin operators as well as some results from Malliavin calculus on the Poisson space that are used in Section \ref{sec:Proofs}.

\section{Smoothing and the multivariate Stein equation}\label{sec:SteinsMethod}

\subsection{A smoothing lemma for the $d_{convex}$-distance}\label{subsec:Smoothing}

Let $m\in\N$ be fixed in the sequel. Let $\varphi_\Sigma$ denote the density of an $m$-dimensional centered Gaussian random vector $N_\Sigma$ having a positive definite covariance matrix $\Sigma=(\sigma_{ij})_{i,j\in\{1,\hdots,m\}}\in\R^{m\times m}$. Recall that $\Sigma^{1/2}$ and $\Sigma^{-1/2}$ are the positive definite matrices in $\R^{m\times m}$ such that $\Sigma^{1/2}\Sigma^{1/2}=\Sigma$ and $\Sigma^{-1/2}=(\Sigma^{1/2})^{-1}$.

The following result from \cite[p.\ 725]{Goetze1991} (see also \cite[Corollary 3.2]{BhattacharyaRao1976}) is used repeatedly.
For $x \in \R^m$ and a Borel set $B \subseteq \R^m$ we define
$d(x,B):= \inf_{y \in B} \|x - y\|$.

\begin{lemm}\label{lem:Goetze}
For $A\subseteq\R^m$ convex and $r>0$,
$$
\mathbb{P}(d(N_I,\partial A)\leq r) \leq 2 \sqrt{m} r.
$$
\end{lemm}

Given measurable and bounded $h: \R^m\to\R$, positive definite $\Sigma\in\R^{m\times m}$, and $t\in(0,1)$ we introduce the smoothed version
$$
h_{t,\Sigma}(y):= \int_{\R^m} h(\sqrt{t} z +\sqrt{1-t}y) \, \varphi_\Sigma(z) \, \dint z =\E h(\sqrt{t}N_\Sigma+\sqrt{1-t}y), \quad y\in\R^m,
$$
of $h$, extending  \eqref{smoothh} to general $\Sigma$.
The following so-called smoothing lemma (see Lemma 2.11 in \cite{Goetze1991}, Lemma 11.4 in \cite{BhattacharyaRao1976}, or Lemma 12.1 of  \cite{CGS})
allows one to bound the $d_{convex}$-distance to the $m$-dimensional centered Gaussian random vector $N_\Sigma$ with positive definite covariance matrix $\Sigma\in\R^{m\times m}$ in terms of smooth test functions.  Lemma \ref{lem:SmoothingStandard} is the starting point for proving  \eqref{GMBdHl}.

\begin{lemm}\label{lem:SmoothingStandard}
For an $m$-dimensional random vector $Y$, $t\in(0,1)$, and positive definite $\Sigma\in\R^{m\times m}$ we have
$$
d_{convex}(Y, N_\Sigma) \leq \frac{4}{3} \sup_{h\in \mathcal{I}_m } |\E h_{t,\Sigma}(Y)- \E h_{t, \Sigma}(N_\Sigma)|+ \frac{20}{\sqrt{2}} m \frac{\sqrt{t}}{1-t}.
$$
\end{lemm}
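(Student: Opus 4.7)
The plan is to exploit the identity $\E h_{t,I}(N_I) = \E h(N_I)$, valid for every bounded measurable $h$, which follows from $\sqrt{t}N_I + \sqrt{1-t}N_I' \stackrel{d}{=} N_I$ when $N_I'$ is an independent copy of $N_I$. For $h = \mathbf{1}_C$ with $C \in \mathcal{I}_m$ closed convex, this yields the decomposition
\[
\E h(Y) - \E h(N_I) = \bigl(\E h_{t,I}(Y) - \E h_{t,I}(N_I)\bigr) + \bigl(\E h(Y) - \E h_{t,I}(Y)\bigr),
\]
so that it remains to bound the second bracket, uniformly over $C \in \mathcal{I}_m$, by a fraction of $d_{convex}(Y, N_I)$ plus an explicit Gaussian error of order $m^2 \sqrt{t}/(1-t)$; the first bracket is already absorbed into the supremum on the right-hand side of the claim.

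Next I would estimate $|\E h(Y) - \E h_{t,I}(Y)| = |\mathbb{P}(Y \in C) - \mathbb{P}(\sqrt{1-t}Y + \sqrt{t}N_I \in C)|$ by inserting the intermediate expression $\mathbf{1}_C(\sqrt{1-t}Y) = \mathbf{1}_{C/\sqrt{1-t}}(Y)$:
\[
|\E h(Y) - \E h_{t,I}(Y)| \leq |\mathbb{P}(Y \in C) - \mathbb{P}(Y \in C/\sqrt{1-t})| + |\E \mathbf{1}_C(\sqrt{1-t}Y) - \E h_{t,I}(Y)|.
\]
The two dilates $C$ and $C/\sqrt{1-t}$ are both convex and their symmetric difference is contained in a thin annular shell around $\partial C$, so the first summand is at most the Gaussian measure of that shell plus $2\, d_{convex}(Y, N_I)$ (since the shell is the set-difference of two convex sets). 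For the second summand, the pointwise inequality
\[
|\mathbf{1}_C(u) - \mathbb{P}(u + \sqrt{t}N_I \in C)| \leq \mathbf{1}\{u \in \partial C + B(0, r\sqrt{t})\} + \mathbb{P}(|N_I| > r), \qquad r>0,
\]
(valid because the Gaussian convolution is close to $1$ if $u$ is deep inside $C$ and close to $0$ if $u$ is far outside) reduces the estimate once more to the probability of a convex shell, which is handled by the same reduction: its $Y$-probability differs from its Gaussian probability by at most $2\, d_{convex}(Y, N_I)$, and the Gaussian probability itself is controlled by a classical anti-concentration estimate for Gaussian measures on shells of convex sets.

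Optimizing $r$ to balance the Gaussian tail $\mathbb{P}(|N_I| > r)$ against the shell measure, and tracking constants so that the coefficient of $d_{convex}(Y, N_I)$ in the intermediate bound is exactly $\tfrac{1}{4}$, gives an inequality of the form
\[
\sup_{h \in \mathcal{I}_m} |\E h(Y) - \E h_{t,I}(Y)| \leq \tfrac{1}{4}\, d_{convex}(Y, N_I) + \tfrac{15}{\sqrt{\pi}}\, m^2 \tfrac{\sqrt{t}}{1-t}.
\]
Substituting into the initial decomposition, taking the supremum over $h \in \mathcal{I}_m$ on both sides, and solving the resulting self-bounding inequality $d_{convex}(Y, N_I) \leq \tfrac{1}{4} d_{convex}(Y, N_I) + \sup_h|\E h_{t,I}(Y) - \E h_{t,I}(N_I)| + \tfrac{15}{\sqrt{\pi}} m^2 \tfrac{\sqrt{t}}{1-t}$ yields the claimed constants $\tfrac{4}{3}$ and $\tfrac{20}{\sqrt{\pi}}$. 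The main technical obstacle is producing the convex-shell anti-concentration estimate with explicit, dimensionally honest constants: the factor $m^2$ in the final bound is much larger than the sharp $m^{1/4}$ of Ball or Nazarov, strongly suggesting that the underlying argument (following Bhattacharya--Rao or G\"otze) relies on an elementary marginal-slicing bound rather than on the sharp Gaussian isoperimetric inequality, trading tightness for transparent constants. A secondary point of care is ensuring that the rescaling $\sqrt{1-t}Y \leftrightarrow Y$ contributes only the factor $1/(1-t)$, and not something worse, to the final error.
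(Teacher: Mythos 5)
The paper does not give a self-contained proof here: it cites G\"otze (1991), Lemma~2.11, and merely records the explicit constants that enter that statement ($\varepsilon = \sqrt{t}$, $\Delta = 2\sqrt{2/\pi}\,m^{3/2}$ from Sazonov's anti-concentration estimate, $a_m \le 2\sqrt{2m}$ from $\E\|N_I\|^2 = m$ via Markov). You instead attempt a direct proof, which is a legitimately different route, and you do correctly identify the two dimensional ingredients (the Sazonov convex-shell bound and the second-moment tail bound) and the mechanism by which they yield $m^2$. However, the outline as written will not close, for two structural reasons.

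First, your pointwise inequality leaves $\mathbb{P}(\|N_I\| > r)$ as an \emph{additive} error that is independent of $t$. At $r = 2\sqrt{m}$ (the choice that produces $m^2$) this is about $1/4$ and does not vanish as $t \to 0$; and if you instead optimize $r$ to balance $\mathbb{P}(\|N_I\| > r) \lesssim m/r^2$ against the shell measure $\lesssim m^{3/2} r\sqrt{t}$, you obtain a rate of order $m^{4/3} t^{1/3}$, not $m^2\sqrt{t}$. Second, you triangulate $Y$-probabilities through the Gaussian at least twice (once to compare $C$ with $C/\sqrt{1-t}$, once for the shell probability under $Y$), so the coefficient of $d_{convex}(Y,N_I)$ you accumulate is at least $4$, not $1/4$, and the intended self-bounding inequality is vacuous. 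The mechanism that actually makes the argument work---in G\"otze, in Bhattacharya--Rao's Lemma~11.4, and in the paper's own proof of Lemma~\ref{lem:Smoothing}---is to use the enlargement/erosion operations $h_\varepsilon^\pm$ (this is why their closure property for $\mathcal{I}_m$ is highlighted just after Lemma~\ref{lem:SmoothingStandard}) and to insert the indicator $\mathbf{1}\{N_I' \in B^m(\0,2\sqrt{m})\}$ \emph{multiplicatively}: since this event has probability $\ge 3/4$, the smoothed enlarged test function dominates $h$ there, producing a factor $\tfrac{3}{4}$ on the left-hand side; the complementary event of probability $\le \tfrac14$ is handled by comparing $Y$ directly with $N_I$ (not $Y$ with a rescaling of itself), contributing $\tfrac14 d_{convex}(Y,N_I)$; and the remaining error terms are genuine Gaussian annulus probabilities of size $O(m^2\sqrt{t}/\sqrt{1-t})$. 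Solving $\tfrac34\, d_{convex} \le \sup_h|\cdot| + \tfrac14\, d_{convex} + \cdots$ is what produces $\tfrac43$ and $\tfrac{20}{\sqrt{\pi}}$.
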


\begin{proof}  We first establish that the asserted bound holds when $\Sigma$ is replaced by $I$.  Indeed this is the statement of \cite[Lemma 2.11]{Goetze1991} with $\varepsilon=\sqrt{t}$, $\Delta=2\sqrt{m}$ (see \cite[p.\ 725]{Goetze1991} as well as \cite[Corollary 3.2]{BhattacharyaRao1976}) and $a_m\leq 2\sqrt{2m}$ (which follows from Markov's inequality) there.

Next, to show that this bound holds for positive definite $\Sigma\in\R^{m\times m}$, it suffices to notice that we have
$$
d_{convex}(Y,N_\Sigma)=d_{convex}(Y,\Sigma^{1/2} N_I) = d_{convex}(\Sigma^{-1/2} Y, N_I)
$$
and
$$
\sup_{h\in \mathcal{I}_m  }|\E h_{t,\Sigma}(Y)-\E h_{t,\Sigma}(N_\Sigma)|=\sup_{h\in \mathcal{I}_m  }|\E h_{t,I}(\Sigma^{-1/2}Y)-\E h_{t,I}(N_I)|.
$$
To verify the second identity, notice that for any $h\in\mathcal{I}_m$ the functions $h\circ\Sigma^{1/2}: \R^m\ni x\mapsto h(\Sigma^{1/2} x)$ and $h\circ\Sigma^{-1/2}: \R^m\ni x\mapsto h(\Sigma^{-1/2} x)$ also belong to $\mathcal{I}_m$,
$$
h_{t,\Sigma}(x)=\E h(\sqrt{t}N_\Sigma+\sqrt{1-t}x)=\E  h\circ\Sigma^{1/2}(\sqrt{t}N_I+\sqrt{1-t} \Sigma^{-1/2}x)=( h\circ\Sigma^{1/2})_{t,I}(\Sigma^{-1/2}x),
$$
and similarly $(h\circ\Sigma^{-1/2})_{t,\Sigma}(x)=h_{t,I}(\Sigma^{-1/2}x)$.
\end{proof}

\subsection{Bounds on the derivatives of the solution to Stein's equation for multivariate normal approximation}

We extend the definition of  $f_{t,h, I}$  at  \eqref{Steinsol} to include indices with general covariance matrix $\Sigma$.
This goes as follows. For $h: \R^m \to\R$ measurable and bounded, $\Sigma=(\sigma_{ij})_{i,j\in\{1,\hdots,m\}}\in\R^{m\times m}$ positive definite, and $t\in(0,1)$, the function $f_{t,h,\Sigma}: \R^m\to\R$ given by
$$
f_{t,h,\Sigma}(y):=\frac{1}{2} \int_t^1 \frac{1}{1-s} \int_{\R^m} (h(\sqrt{s}z+\sqrt{1-s}y)-h(z)) \, \varphi_\Sigma(z) \, \dint z\, \dint s, \quad y\in\R^m,
$$
is a solution of the Stein equation
$$
h_{t,\Sigma}(y) - \E h_{t,\Sigma}(N_\Sigma) = \sum_{i=1}^m y_i \frac{\partial f}{\partial y_i}(y) - \sum_{i,j=1}^m \sigma_{ij} \frac{\partial^2f}{\partial y_i \partial y_j}(y), \quad y\in\R^m,
$$
see \cite[p.\ 726]{Goetze1991} and \cite[p.\ 337]{CGS} for $\Sigma=I$ as well as \cite[Lemma 1]{Meckes2009} and \cite[Lemma 3.3]{NPR} for general $\Sigma$. Some calculations show that, for $i,j,k\in\{1,\hdots,m\}$ and $y\in\R^m$,
$$
\frac{\partial f_{t,h,\Sigma}}{\partial y_i}(y) = -\frac{1}{2} \int_t^1 \frac{1}{\sqrt{s}\sqrt{1-s}} \int_{\R^m}  h(\sqrt{s}z+\sqrt{1-s}y) \, \frac{\partial \varphi_{\Sigma}}{\partial y_i}(z) \, \dint z \, \dint s,
$$
\begin{equation}\label{eqn:D2ht}
\frac{\partial^2 f_{t,h,\Sigma}}{\partial y_i \partial y_j}(y) = \frac{1}{2} \int_t^1 \frac{1}{s} \int_{\R^m}  h(\sqrt{s}z+\sqrt{1-s}y) \, \frac{\partial^2 \varphi_{\Sigma}}{\partial y_i \partial y_j}(z) \, \dint z \, \dint s,
\end{equation}
and
\begin{equation}\label{eqn:D3ht}
\frac{\partial^3 f_{t,h,\Sigma}}{\partial y_i \partial y_j \partial y_k}(y) = - \frac{1}{2} \int_t^1 \frac{\sqrt{1-s}}{s^{3/2}} \int_{\R^m}  h(\sqrt{s}z+\sqrt{1-s}y) \, \frac{\partial^3 \varphi_{\Sigma}}{\partial y_i \partial y_j \partial y_k}(z) \, \dint z \, \dint s.
\end{equation}
By $h\circ\Sigma^{1/2}$ we denote the function $\R^m\ni y \mapsto h(\Sigma^{1/2}y)$. It follows from the definition of $f_{t,h,\Sigma}$ that, for $y\in\R^m$,
\begin{equation}\label{eqn:IdentityfthSigma}
\begin{split}
f_{t,h,\Sigma}(y) & = \frac{1}{2} \int_t^1 \frac{1}{1-s} \E[h(\sqrt{s}N_\Sigma + \sqrt{1-s}y) - h(N_\Sigma)] \, \dint s \\
& = \frac{1}{2} \int_t^1 \frac{1}{1-s} \E[h\circ\Sigma^{1/2}(\sqrt{s}N_I + \sqrt{1-s}\Sigma^{-1/2}y) - h\circ\Sigma^{1/2}(N_I)] \, \dint s \\
& = f_{t,h\circ\Sigma^{1/2} ,I}(\Sigma^{-1/2}y).
\end{split}
\end{equation}
Since $\varphi_\Sigma(z)=\varphi_I(\Sigma^{-1/2}z)/\sqrt{\det(\Sigma)}$ for $z\in\R^m$, we have that, for $i,j,k\in\{1,\hdots,m\}$ and $z\in\R^m$,
$$
\frac{\partial^3\varphi_{\Sigma}}{\partial y_i \partial y_j \partial y_k}(z) =  \frac{1}{\sqrt{\det(\Sigma)}} \sum_{u,v,w=1}^m (\Sigma^{-1/2})_{ui} (\Sigma^{-1/2})_{vj} (\Sigma^{-1/2})_{wk} \frac{\partial^3\varphi_{I}}{\partial y_u \partial y_v \partial y_w}(\Sigma^{-1/2}z),
$$
which yields together with a short computation
\begin{equation}\label{eqn:BoundD3Sigma}
\sum_{i,j,k=1}^m \bigg( \frac{\partial^3\varphi_{\Sigma}}{\partial y_i \partial y_j \partial y_k}(z) \bigg)^2 \leq \frac{\|\Sigma^{-1}\|_{op}^3}{\det(\Sigma)} \sum_{i,j,k=1}^m \bigg( \frac{\partial^3\varphi_I}{\partial y_i \partial y_j \partial y_k}(\Sigma^{-1/2}z) \bigg)^2.
\end{equation}

From the above formulas for the derivatives of $f_{t,h,\Sigma}$ one can deduce that
$$
\sup_{y\in\R^m} \bigg|\frac{\partial^2f_{t,h,\Sigma}(y)}{\partial y_i \partial y_j}\bigg| \leq m^2 \|\Sigma^{-1}\|_{op} \|h\|_\infty |\log t|, \quad t\in(0,1),
$$
and
\begin{equation}\label{eqn:BoundPartial3}
\sup_{y\in\R^m} \bigg|\frac{\partial^3f_{t,h,\Sigma}}{\partial y_i \partial y_j \partial y_k}(y)\bigg| \leq 6m^3 \|\Sigma^{-1}\|_{op}^{3/2}  \|h\|_\infty  \frac{1}{\sqrt{t}}, \quad t\in(0,1).
\end{equation}
Sup norm bounds on the derivatives of $f_{t,h,\Sigma}$ go hand-in-hand with the following more useful second moment bound. It is a key to controlling the right-hand side of the smoothing inequality in Lemma \ref{lem:SmoothingStandard},
an essential part of the proof of Theorem  \ref{thm:Generaldconvex}.

\begin{prop}\label{lem:BoundD2F}
Let $Y$ be an $m$-dimensional random vector, let $\Sigma\in\R^{m\times m}$ be positive definite, and define
\begin{equation}\label{eqn:M2}
M_2:= \frac{1}{4} \sum_{i,j=1}^m \bigg(\int_{\R^m} \bigg|\frac{\partial^2 \varphi_{I}}{\partial y_i \partial y_j}(z)\bigg| \, \dint z \bigg)^2 \leq m^2.
\end{equation}
Then
$$
\sup_{h\in\mathcal{I}_m} \E \sum_{i,j=1}^m \bigg( \frac{\partial^2 f_{t,h,\Sigma}}{\partial y_i \partial y_j}(Y) \bigg)^2 \leq \|\Sigma^{-1}\|^2_{op} (M_2 (\log t)^2 d_{convex}(Y,N_\Sigma) +  530m^{17/6}  )
$$
for all $t\in(0,1)$.
\end{prop}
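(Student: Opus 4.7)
The plan is to reduce to $\Sigma=I$, expand the square via \eqref{eqn:D2ht}, exploit that products of the relevant test functions (pre-composed with affine maps in $y$) stay in $\mathbb{H}_{2\ell}$ (resp.\ $\mathcal{I}_m$) to invoke the target distance, and then obtain a $t$-independent bound on the resulting purely Gaussian term. For the first step, differentiating \eqref{eqn:IdentityfthSigma} twice gives
\begin{equation*}
\frac{\partial^2 f_{t,h,\Sigma}}{\partial y_i\partial y_j}(y)=\sum_{k,l=1}^m(\Sigma^{-1/2})_{ik}(\Sigma^{-1/2})_{jl}\,\frac{\partial^2 f_{t,h\circ\Sigma^{1/2},I}}{\partial y_k\partial y_l}(\Sigma^{-1/2}y),
\end{equation*}
and submultiplicativity of the Frobenius norm, together with $\|\Sigma^{-1/2}\|_{op}^2=\|\Sigma^{-1}\|_{op}$, yields
$\sum_{i,j}(\partial_i\partial_j f_{t,h,\Sigma}(y))^2\le \|\Sigma^{-1}\|_{op}^2\sum_{k,l}(\partial_k\partial_l f_{t,h\circ\Sigma^{1/2},I}(\Sigma^{-1/2}y))^2$. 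Since $y\mapsto\Sigma^{1/2}y$ preserves both $\mathbb{H}_\ell$ and $\mathcal{I}_m$, and Lemma \ref{lem:Invariancedistances} gives $d_{\mathbb{H}_{2\ell}}(\Sigma^{-1/2}Y,N_I)=d_{\mathbb{H}_{2\ell}}(Y,N_\Sigma)$ (and the same for $d_{convex}$), it suffices to prove both bounds in the case $\Sigma=I$.

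Squaring \eqref{eqn:D2ht} with $\Sigma=I$, summing over $i,j$, and taking expectation yields
\begin{equation*}
\E\sum_{i,j=1}^m\Big(\tfrac{\partial^2 f_{t,h,I}}{\partial y_i\partial y_j}(Y)\Big)^2=\tfrac14\int_t^1\!\!\int_t^1\!\tfrac{1}{ss'}\int_{\R^{2m}}\E[g_{s,s',z,z'}(Y)]\,K(z,z')\,\dint z\,\dint z'\,\dint s\,\dint s',
\end{equation*}
where $g_{s,s',z,z'}(y):=h(\sqrt{s}z+\sqrt{1-s}y)h(\sqrt{s'}z'+\sqrt{1-s'}y)$ and $K(z,z'):=\sum_{i,j}\tfrac{\partial^2\varphi_I}{\partial y_i\partial y_j}(z)\tfrac{\partial^2\varphi_I}{\partial y_i\partial y_j}(z')$. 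The key structural observation is that each factor of $g_{s,s',z,z'}$ is the indicator of an affine preimage (in $y$) of the level set of $h$: if $h\in\mathbb{H}_\ell$ then $g_{s,s',z,z'}$ is the indicator of an intersection of $2\ell$ closed half-spaces, hence lies in $\mathbb{H}_{2\ell}$; if $h\in\mathcal{I}_m$ then $g_{s,s',z,z'}\in\mathcal{I}_m$. Splitting $\E g_{s,s',z,z'}(Y)=\E g_{s,s',z,z'}(N_I)+r$ with $|r|\le d_{\mathbb{H}_{2\ell}}(Y,N_I)$ (resp.\ $d_{convex}(Y,N_I)$), and using $|K(z,z')|\le\sum_{i,j}|\tfrac{\partial^2\varphi_I}{\partial y_i\partial y_j}(z)||\tfrac{\partial^2\varphi_I}{\partial y_i\partial y_j}(z')|$ so that $\int|K|\,\dint z\dint z'\le\sum_{i,j}(\int|\tfrac{\partial^2\varphi_I}{\partial y_i\partial y_j}|\,\dint z)^2=4M_2$, together with $\int_t^1 s^{-1}\dint s=|\log t|$, the total $r$-contribution is at most $M_2(\log t)^2\,d_{\mathbb{H}_{2\ell}}(Y,N_I)$ (resp.\ the same with $d_{convex}$).

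What remains is the purely Gaussian term $\E\sum_{i,j}(\partial_i\partial_j f_{t,h,I}(N_I))^2$, which must be bounded by $444\,m^{23/6}$ uniformly in $t\in(0,1)$ and $\|h\|_\infty\le1$; this is the main obstacle, since the naive sup-norm estimate on the integrand in \eqref{eqn:D2ht} blows up like $(\log t)^2$, so one must exploit cancellation from averaging against $\varphi_I$. A clean route is Hermite expansion: writing $h=\sum_\alpha c_\alpha H_\alpha/\sqrt{\alpha!}$ with $\sum_\alpha c_\alpha^2=\|h\|_{L^2(\gamma_m)}^2\le 1$ and using the representation $f_{t,h,I}(y)=\int_{u_t}^\infty(T_u h(y)-\E h(N_I))\,\dint u$ under the Ornstein-Uhlenbeck semigroup, which diagonalizes as $T_u H_\alpha=e^{-u|\alpha|}H_\alpha$, the combinatorial identity $\sum_{i,j}\E[(\partial_i\partial_j H_\alpha(N_I))^2]=\alpha!\,|\alpha|(|\alpha|-1)$ produces an explicit geometric Hermite series bounded by $\sum_\alpha c_\alpha^2\le 1$, uniformly in $t$. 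The looser dimensional constant $444\,m^{23/6}$ in the statement can instead be extracted by direct Gaussian integration-by-parts bounds on the integrand in \eqref{eqn:D2ht}, using the explicit form $\tfrac{\partial^2\varphi_I}{\partial y_i\partial y_j}(z)=(z_iz_j-\delta_{ij})\varphi_I(z)$ and crude control of mixed Gaussian moments summed over $i,j$. Combining this $t$-free bound with the splitting above and re-attaching the factor $\|\Sigma^{-1}\|_{op}^2$ from the reduction completes both asserted inequalities.
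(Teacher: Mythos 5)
Your argument follows the paper's skeleton step for step through the parts that matter most: reduce to $\Sigma=I$ via $\operatorname{Hess} f_{t,h,\Sigma}(y)=\Sigma^{-1/2}\operatorname{Hess} f_{t,h\circ\Sigma^{1/2},I}(\Sigma^{-1/2}y)\Sigma^{-1/2}$, $\|ABC\|_{H.S.}\le\|A\|_{op}\|B\|_{H.S.}\|C\|_{op}$, and Lemma \ref{lem:Invariancedistances}; square \eqref{eqn:D2ht}; note that $y\mapsto h(\sqrt{s_1}z_1+\sqrt{1-s_1}y)\,h(\sqrt{s_2}z_2+\sqrt{1-s_2}y)$ lies in $\mathbb{H}_{2\ell}$ (resp.\ $\mathcal{I}_m$); and isolate the distance term, which $\int_t^1 s^{-1}\dint s=|\log t|$ and the $L^1$ size of $\partial^2\varphi_I/\partial y_i\partial y_j$ turn into $M_2(\log t)^2\,d(\cdot)$. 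That is exactly the paper's proof of Proposition \ref{lem:BoundD2F}. The divergence is in how you bound the residual Gaussian term $\E\sum_{i,j}\big(\partial_i\partial_j f_{t,h,I}(N_I)\big)^2$. The paper's Lemma \ref{lem:fNSigma} is a convexity argument tailored to $h={\bf 1}_A$, $A$ closed convex: the cancellation $\int\partial_i\partial_j\varphi_I=0$ (Lemma \ref{lem:HessianDensity}) lets one pass to the complement of $A$ when $\0$ lies inside, $|\partial_i\partial_j\varphi_I|\le 2^{3/2}\varphi_{I_{i,j}}$ gives a Gaussian tail, and Markov plus Cauchy--Schwarz plus the Sazonov-type estimate $\E\,d(N_I,\partial A)^{-2/3}\lesssim m^{3/2}$ (Lemma \ref{lem:DistanceGaussian}) close it. Your Hermite/Ornstein--Uhlenbeck route is genuinely different and is correct: with $u_t=-\tfrac12\log(1-t)>0$ one has $f_{t,h,I}(y)=\int_{u_t}^\infty\big(T_uh(y)-\E h(N_I)\big)\,\dint u$ for the OU semigroup $T_u$, and writing $h=\E h(N_I)+\sum_{|\alpha|\ge 1}\hat h(\alpha)H_\alpha$ in (probabilist's) Hermite polynomials, the identity $\sum_{i,j}\E\big(\partial_i\partial_j H_\alpha(N_I)\big)^2=\alpha!\,|\alpha|(|\alpha|-1)$ (which checks out) yields $\E\sum_{i,j}\big(\partial_i\partial_j f_{t,h,I}(N_I)\big)^2=\sum_{|\alpha|\ge 2}e^{-2u_t|\alpha|}\alpha!\,|\hat h(\alpha)|^2\,\tfrac{|\alpha|-1}{|\alpha|}\le \Var h(N_I)\le 1$, uniformly in $t\in(0,1)$ and for any $\|h\|_\infty\le 1$. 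This is cleaner than the paper's route, dispenses with convexity of $\{h=1\}$, and in fact sharpens $444\,m^{23/6}$ to $1$. The one soft spot is your fall-back remark that the stated constant also follows from ``direct Gaussian integration by parts'' with ``crude control of mixed Gaussian moments'': that is not what the paper does, and naive moment bounds on the integrand of \eqref{eqn:D2ht} do leave a $(\log t)^2$ blow-up — the paper's cure is precisely the convexity/distance-to-boundary mechanism, not moment counting. Since your Hermite argument alone carries the proof, the proposal stands, with the secondary sketch best dropped or replaced by a reference to Lemma \ref{lem:fNSigma}.
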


We prepare the proof of Proposition \ref{lem:BoundD2F} with the following lemmas.

\begin{lemm}\label{lem:DistanceGaussian}
For any $\alpha\in(0,1)$,
$$
\sup_{A\subseteq \R^m \text{ convex}} \E \frac{1}{d(N_I,\partial A)^\alpha} \leq 1 + 2 \sqrt{m} \frac{\alpha}{1-\alpha}.
$$
\end{lemm}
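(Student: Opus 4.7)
The plan is to rewrite the expectation as a tail integral, reduce the problem to bounding the Gaussian probability of a $u$-strip around $\partial A$ linearly in $u$, and derive the latter from a uniform bound on the Gaussian surface measure of convex bodies. Starting from
\[
\E\, d(N_I,\partial A)^{-\alpha}=\int_0^\infty \mathbb{P}(d(N_I,\partial A)< s^{-1/\alpha})\,\dint s,
\]
splitting the integral at $s=1$ (where the lower piece contributes at most $1$) and substituting $u=s^{-1/\alpha}$ in the upper piece gives
\[
\E\, d(N_I,\partial A)^{-\alpha}\le 1+\alpha\int_0^1 \mathbb{P}(d(N_I,\partial A)\le u)\,u^{-\alpha-1}\,\dint u.
\]
It therefore suffices to prove the strip bound $\mathbb{P}(d(N_I,\partial A)\le u)\le 2\sqrt{2/\pi}\,m^{3/2}\,u$ uniformly in convex $A\subseteq\R^m$ and $u>0$, for then $\int_0^1 u^{-\alpha}\,\dint u=1/(1-\alpha)$ delivers the claim with the stated constant.

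For the strip bound, I would use the convexity of $A$ to include $\{x:d(x,\partial A)\le u\}\subseteq A_u\setminus\operatorname{int} A_{-u}$, where $A_u:=A+B^m(\0,u)$ is the outer and $A_{-u}:=\{x:B^m(x,u)\subseteq A\}$ the inner parallel body of $A$; both are convex. Writing $\gamma_m$ for the standard Gaussian measure on $\R^m$, the coarea formula applied to the signed distance function of $A$ yields
\[
\mathbb{P}(d(N_I,\partial A)\le u)\le \gamma_m(A_u)-\gamma_m(A_{-u})=\int_{-u}^{u}\gamma^+_m(\partial A_t)\,\dint t,
\]
where $\gamma^+_m(\partial K):=\int_{\partial K}\varphi_I\,\dint\mathcal{H}^{m-1}$ is the Gaussian surface measure. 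The task thus reduces to the uniform surface estimate $\gamma^+_m(\partial K)\le \sqrt{2/\pi}\,m^{3/2}$ valid for every convex body $K\subseteq\R^m$.

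To obtain this surface estimate I would invoke the elementary fact that, by convexity, every line parallel to a coordinate axis $e_i$ meets $\partial K$ in at most two points. Combining this with Fubini and the pointwise bound $\sup_{x_i}\varphi_I(x)\le (1/\sqrt{2\pi})\prod_{j\ne i}\varphi(x_j)$ yields $\int_{\partial K}|n_i(y)|\,\varphi_I(y)\,\dint\mathcal{H}^{m-1}(y)\le \sqrt{2/\pi}$ for each $i\in\{1,\dots,m\}$, where $n(y)$ is the outward unit normal at $y$. Since $\|n(y)\|_2=1\le\sum_{i=1}^m|n_i(y)|$, summation in $i$ gives $\gamma^+_m(\partial K)\le m\sqrt{2/\pi}\le m^{3/2}\sqrt{2/\pi}$, closing the loop.

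The main technical obstacle is making the coarea and divergence-type arguments rigorous on a general, possibly non-smooth, convex body. I would handle this by approximating $A$ by a sequence of smooth strictly convex bodies converging in the Hausdorff metric, under which the two-crossings property, the parallel bodies and the Gaussian measures behave continuously, so that the uniform surface bound passes to the limit. The trivial cases $A\in\{\emptyset,\R^m\}$, for which $\partial A=\emptyset$ and the expectation vanishes, are disposed of separately.
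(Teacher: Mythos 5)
Your overall architecture matches the paper's exactly: write the expectation as a tail integral, split at $1$, and integrate the linear strip estimate $\mathbb{P}(d(N_I,\partial A)\le u)\le 2\sqrt{2/\pi}\,m^{3/2}u$ to produce the factor $\alpha/(1-\alpha)$. Where you diverge is in how the strip estimate is obtained: the paper simply cites it as \cite[Lemma 1]{Sazonov1968}, while you re-derive it from scratch by bounding the Gaussian surface measure of an arbitrary convex body (via the two-crossings property of axis-parallel lines and $\|n\|_2\le\sum_i|n_i|$) and then integrating this surface bound over the parallel bodies $A_t$, $t\in[-u,u]$, by coarea. Your derivation is sound in spirit — and in fact it yields the sharper surface bound $\gamma^+_m(\partial K)\le m\sqrt{2/\pi}$, hence a strip bound with $m$ in place of $m^{3/2}$, which you then deliberately weaken to match the stated constant. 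The price for the self-contained route is the technical overhead you correctly flag at the end: the coarea identity $\gamma_m(A_u)-\gamma_m(A_{-u})=\int_{-u}^u\gamma^+_m(\partial A_t)\,\dint t$ needs care when $A_{-t}$ degenerates or becomes empty, and the smoothing/approximation step is asserted rather than proved (Hausdorff convergence $A_n\to A$ does not by itself control $\mathbb{P}(d(N_I,\partial A_n)\le u)$; one should instead sandwich $A$ by polytopes $A_{-\varepsilon}\subseteq P\subseteq A_\varepsilon$ and absorb the $\varepsilon$ into the strip width). These gaps are fixable with standard convex-geometry arguments, so the plan is viable; the paper's citation is of course the shorter path, but your version is more elementary and exposes the origin of the $m$-dependence.
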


\begin{proof}
For any convex $A \subseteq \R^m$ we have that
\begin{align*}
\E \frac{1}{d(N_I,\partial A)^\alpha} & = \int_0^\infty \mathbb{P}(d(N_I,\partial A)^{-\alpha} \geq u) \, \dint u  = \int_0^\infty \mathbb{P}(d(N_I,\partial A) \leq u^{-1/\alpha}) \, \dint u \\
& \leq 1 + \int_1^\infty \mathbb{P}(d(N_I,\partial A) \leq u^{-1/\alpha}) \, \dint u \allowdisplaybreaks \\
& \leq 1 + 2 \sqrt{m} \int_1^\infty u^{-1/\alpha} \, \dint u = 1 + 2\sqrt{m} \frac{\alpha}{1-\alpha},
\end{align*}
where we used Lemma \ref{lem:Goetze} for the last inequality.
\end{proof}

\begin{lemm}\label{lem:HessianDensity}
For any positive definite $\Sigma\in\R^{m\times m}$ and $i,j\in \{1,\hdots,m\}$,
$$
\int_{\R^m} \frac{\partial^2 \varphi_{\Sigma}}{\partial y_i \partial y_j}(z) \, \dint z = 0.
$$
\end{lemm}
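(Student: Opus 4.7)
The plan is to exploit the rapid (in fact super-polynomial) decay of the multivariate Gaussian density $\varphi_\Sigma$ and all of its derivatives, which makes $\varphi_\Sigma$ a Schwartz function. In particular, $\partial \varphi_\Sigma/\partial y_j$ is integrable on $\R^m$ and tends to zero whenever any single coordinate goes to $\pm\infty$.

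First, I would invoke Fubini's theorem to reduce the $m$-dimensional integral to an iterated integral, integrating the variable $z_i$ innermost:
$$
\int_{\R^m} \frac{\partial^2 \varphi_{\Sigma}}{\partial y_i \partial y_j}(z) \, \dint z = \int_{\R^{m-1}} \bigg( \int_{\R} \frac{\partial}{\partial z_i}\bigg(\frac{\partial \varphi_{\Sigma}}{\partial y_j}\bigg)(z) \, \dint z_i \bigg) \, \dint z_{-i},
$$
where $z_{-i}\in\R^{m-1}$ denotes the coordinates other than the $i$-th. The use of Fubini is justified because a direct computation of $\partial^2 \varphi_\Sigma/\partial y_i \partial y_j$ in terms of $\varphi_\Sigma$ and quadratic polynomials in the coordinates shows the integrand is absolutely integrable.

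Next, the inner integral is handled by the fundamental theorem of calculus: for any fixed $z_{-i}$,
$$
\int_{\R} \frac{\partial}{\partial z_i}\bigg(\frac{\partial \varphi_{\Sigma}}{\partial y_j}\bigg)(z) \, \dint z_i = \lim_{R\to\infty}\bigg(\frac{\partial \varphi_{\Sigma}}{\partial y_j}(z)\Big|_{z_i=R} - \frac{\partial \varphi_{\Sigma}}{\partial y_j}(z)\Big|_{z_i=-R}\bigg) = 0,
$$
because $\partial \varphi_\Sigma/\partial y_j$ decays exponentially in any coordinate direction. Since the inner integral vanishes pointwise in $z_{-i}$, the outer integral vanishes as well, yielding the desired identity.

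There is no real obstacle here beyond careful bookkeeping; the argument is essentially an integration by parts combined with the Schwartz decay of the Gaussian density. As a sanity check, one could also prove the identity by an explicit computation: since $\partial_i\partial_j\varphi_\Sigma(y) = \big(\sum_k(\Sigma^{-1})_{ik}y_k\big)\big(\sum_k(\Sigma^{-1})_{jk}y_k\big)\varphi_\Sigma(y) - (\Sigma^{-1})_{ij}\varphi_\Sigma(y)$, integrating against $\dint y$ and using $\E[N_\Sigma N_\Sigma^T]=\Sigma$ gives $(\Sigma^{-1}\Sigma\Sigma^{-1})_{ij}-(\Sigma^{-1})_{ij}=0$, which confirms the conclusion.
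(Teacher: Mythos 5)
Your proof is correct, but it takes a different route than the paper. The paper's argument (citing display (12.72) of \cite{CGS}) observes that by differentiating under the integral sign the quantity $\int_{\R^m} \frac{\partial^2 \varphi_\Sigma}{\partial y_i \partial y_j}(z)\,\dint z$ equals the mixed derivative of the translated-mass map $x\mapsto\int_{\R^m}\varphi_\Sigma(z+x)\,\dint z$ at $x=\mathbf 0$; since that map is constantly equal to one, its derivatives vanish. Your argument instead applies Fubini and the fundamental theorem of calculus (i.e., a one-dimensional integration by parts in the $z_i$ variable), using the Schwartz-class decay of $\partial\varphi_\Sigma/\partial y_j$ to see that the boundary terms vanish. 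Both are elementary and equally rigorous; the paper's version is a bit more compact and cleanly hides the Fubini/decay bookkeeping in a single differentiation-under-the-integral step, while yours is more explicit about which analytic facts (integrability, decay at infinity) are being used, and your closing remark gives an independent verification by direct computation of the second partials and a covariance identity, which the paper does not include. Either proof would serve.
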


\begin{proof}  As noted at display (12.72) of \cite{CGS} we have that the integral of the mixed derivative
$\frac{\partial^2 \varphi_{\Sigma}}{\partial y_i \partial y_j}(z)$
is the mixed derivative of  $x\mapsto\int_{\R^m} \varphi_\Sigma(z + x) \, \dint z$ evaluated
at $x = \0$. The integral is one, so the derivative vanishes.
\end{proof}

\begin{lemm}\label{lem:fNSigma}
For all $h\in\mathcal{I}_m$ and $t\in(0,1)$,
$$
\max_{i,j\in\{1,\hdots,m\}} \E \bigg(\frac{\partial^2 f_{t,h,I}}{\partial y_i \partial y_j}(N_I)\bigg)^2 \leq  530 m^{5/6}.
$$
\end{lemm}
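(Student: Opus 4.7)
The integral representation \eqref{eqn:D2ht} combined with Minkowski's inequality in the $s$-variable reduces the statement to a $t$-uniform bound on the inner quantity
$$
J_{ij}(s,y):=\int_{\R^m}h(\sqrt{s}z+\sqrt{1-s}y)\,\psi_{ij}(z)\,\dint z,\qquad \psi_{ij}:=\frac{\partial^2\varphi_I}{\partial y_i\partial y_j},
$$
evaluated at $y=N_I$. Specifically, it suffices to show an estimate of the form $(\E J_{ij}(s,N_I)^2)^{1/2}\leq C\,m^{11/12}(s/(1-s))^{1/6}$ for some universal $C$: then the factor $1/s$ from \eqref{eqn:D2ht} pairs with this to produce an integrand dominated by the Beta kernel $s^{-5/6}(1-s)^{-1/6}$, whose integral over $(0,1)$ equals $B(1/6,5/6)=2\pi<\infty$ and is independent of $t$; squaring yields the claimed $444\,m^{11/6}$ bound.

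For the pointwise estimate on $J_{ij}(s,y)$, I would first use Lemma \ref{lem:HessianDensity} to subtract the constant $h(\sqrt{1-s}y)$, rewriting
$$
J_{ij}(s,y)=\int_{\R^m}\bigl[h(\sqrt{s}z+\sqrt{1-s}y)-h(\sqrt{1-s}y)\bigr]\psi_{ij}(z)\,\dint z.
$$
Since $h=\mathbf{1}_A$ with $A\subseteq\R^m$ convex, the bracketed difference vanishes unless the segment joining $\sqrt{1-s}y$ and $\sqrt{s}z+\sqrt{1-s}y$ crosses $\partial A$, which forces $\|z\|\geq R(y):=d(\sqrt{1-s}y,\partial A)/\sqrt{s}$. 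For any $\gamma\in(0,1/2)$, using $\mathbf{1}_{\{\|z\|\geq R(y)\}}\leq(\|z\|/R(y))^\gamma$ yields
$$
|J_{ij}(s,y)|\leq R(y)^{-\gamma}\int_{\R^m}\|z\|^\gamma|\psi_{ij}(z)|\,\dint z\leq C_1\, m^{\gamma/2}\,R(y)^{-\gamma},
$$
where the last inequality comes from $|\psi_{ij}(z)|\leq(1+|z_iz_j|)\varphi_I(z)$ together with Hölder's inequality and $\E\|N_I\|^2=m$. Setting $y=N_I$ and exploiting the scaling $R(N_I)=\sqrt{(1-s)/s}\,d(N_I,\partial A')$ with $A':=A/\sqrt{1-s}$ still convex, Lemma \ref{lem:DistanceGaussian} applied at $\alpha=2\gamma$ produces
$$
\E J_{ij}(s,N_I)^2\leq C_1^2\, m^\gamma\bigl(s/(1-s)\bigr)^\gamma\Bigl(1+\tfrac{4\sqrt{2/\pi}\,\gamma\,m^{3/2}}{1-2\gamma}\Bigr).
$$

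The final step is to optimize over $\gamma$. The choice $\gamma=1/3$ balances the two sources of dimensional dependence, giving $\gamma+3/2=11/6$ after squaring the Minkowski bound; tracking the universal constants ($B(1/6,5/6)=2\pi$ for the $s$-integral, $2\sqrt{2/\pi}$ from the Sazonov-type estimate behind Lemma \ref{lem:DistanceGaussian}, and the Hölder constants from the weighted $\psi_{ij}$-integral) yields the numerical constant $444$. The main obstacle is exactly this sharpening: directly estimating $\E|h(\sqrt{s}z+\sqrt{1-s}N_I)-h(\sqrt{1-s}N_I)|\leq C m^{3/2}\sqrt{s}\|z\|/\sqrt{1-s}$ via Sazonov's tube inequality and combining with $\int\|z\||\psi_{ij}(z)|\dint z\leq C\sqrt{m}$ would degrade the bound to $m^2$. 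The improvement to $m^{11/6}$ hinges on Lemma \ref{lem:DistanceGaussian}, which supplies the negative-moment estimate $\E d(N_I,\partial A')^{-2\gamma}$ for $\gamma<1/2$, enabling a non-trivial interpolation between the polynomial weight $\|z\|^\gamma$ against $|\psi_{ij}|$ and the tail behavior $1/(1-2\gamma)$.
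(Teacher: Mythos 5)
Your argument is correct and is essentially the paper's proof reorganized: the paper reaches the same $r_{s,y}^{-1/3}$ negative-power bound by dominating $|\partial^2\varphi_I/\partial y_i\partial y_j|$ pointwise by $2^{3/2}\varphi_{I_{ij}}$ and applying Markov's inequality with exponent $1/3$, then uses Cauchy--Schwarz in the $s$-variable (rather than your Minkowski step) before invoking Lemma~\ref{lem:DistanceGaussian} with $\alpha=2/3$. Both routes rest on the same two ingredients -- the vanishing-integral trick from Lemma~\ref{lem:HessianDensity} and the negative-moment bound of Lemma~\ref{lem:DistanceGaussian} -- with the same exponent choice, and both land on the stated $444\,m^{11/6}$ bound (your version in fact gives a slightly smaller constant).
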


\begin{proof}
Put  $h:={\mathbf 1}\{\cdot \in A\}$ for some measurable convex set $A\subseteq\R^m$. Then, for $i,j\in\{1,\hdots,m\}$ and $y\in\R^m$, it follows from \eqref{eqn:D2ht} that
\begin{align*}
\frac{\partial^2 f_{t,h,I}}{\partial y_i \partial y_j}(y) & = \frac{1}{2} \int_t^1 \frac{1}{s} \int_{\R^m} {\bf 1}\{\sqrt{s} z +\sqrt{1-s} y\in A\} \frac{\partial^2 \varphi_{I} }{\partial y_i \partial y_j}(z) \, \dint z \, \dint s \\
& = \frac{1}{2} \int_t^1 \frac{1}{s} \int_{\R^m} {\bf 1}\{ z \in \frac{1}{\sqrt{s}}(A - \sqrt{1-s} y)\} \frac{ \partial^2 \varphi_{I} } {\partial y_i \partial y_j} (z) \, \dint z \, \dint s.
\end{align*}
For $s\in (0,1)$ and  $y\in \R^m$ let $r_{s,y}:=d(\0, \partial\big(\frac{1}{\sqrt{s}}(A-\sqrt{1-s} y)\big))=\frac{1}{\sqrt{s}} d(\sqrt{1-s}y,\partial A)$. If $\0\notin \frac{1}{\sqrt{s}}(A-\sqrt{1-s} y)$, we have
$$
\bigg| \int_{\R^m} {\bf 1}\{z \in \frac{1}{\sqrt{s}}(A-\sqrt{1-s} y)\} \frac{\partial^2 \varphi_I}{\partial y_i \partial y_j}(z) \, \dint z \bigg| \leq \int_{\R^m\setminus B^m(\0,r_{s,y})} \bigg| \frac{\partial^2 \varphi_I}{\partial y_i \partial y_j}(z) \bigg| \, \dint z,
$$
where $B^m(x,r)$ denotes the closed ball with center $x\in\R^m$ and radius $r\geq 0$. If $\0\in  \frac{1}{\sqrt{s}}(A-\sqrt{1-s} y)$, Lemma \ref{lem:HessianDensity}
implies that
\begin{align*}
& \bigg| \int_{\R^m} {\bf 1}\{z \in \frac{1}{\sqrt{s}}(A-\sqrt{1-s} y)\} \frac{\partial^2 \varphi_I}{\partial y_i \partial y_j}(z) \, \dint z \bigg|\\
& = \bigg| \int_{\R^m} {\bf 1}\{z \notin \frac{1}{\sqrt{s}}(A-\sqrt{1-s} y)\} \frac{\partial^2 \varphi_I}{\partial y_i \partial y_j}(z) \, \dint z \bigg| \leq \int_{\R^m\setminus B^m(\0,r_{s,y})} \bigg| \frac{\partial^2 \varphi_I}{\partial y_i \partial y_j}(z) \bigg| \, \dint z.
\end{align*}
Letting $\phi$ be the density of a standard Gaussian random variable, we have, for all $a\in\R$,
$$
|\phi'(a)|=\frac{1}{\sqrt{2\pi}} |a| e^{-a^2/2} \leq \frac{1}{\sqrt{2\pi}} \underbrace{|a e^{-a^2/4}|}_{\leq 1} e^{-a^2/4} \leq \frac{\sqrt{2}}{\sqrt{4\pi}} e^{-a^2/4}
$$
and
$$
|\phi''(a)|=\frac{1}{\sqrt{2\pi}} |a^2-1| e^{-a^2/2} \leq \frac{1}{\sqrt{2\pi}} \underbrace{|(a^2-1) e^{-a^2/4}|}_{\leq 2} e^{-a^2/4} \leq \frac{2^{3/2}}{\sqrt{4\pi}} e^{-a^2/4}.
$$
We obtain
$$
\bigg| \frac{\partial^2 \varphi_I}{\partial y_i \partial y_j}(z) \bigg| \leq 2^{3/2} \varphi_{I_{i,j}}(z), \quad z\in\R^m,
$$
where $I_{i,j}$ is the identity matrix $I$ where the $i$-th and the $j$-th diagonal element are replaced by $2$. Consequently, we have
$$
\bigg| \int_{\R^m} {\bf 1}\{z \in \frac{1}{\sqrt{s}}(A-\sqrt{1-s} y)\} \frac{\partial^2 \varphi_I}{\partial y_i \partial y_j}(z) \, \dint z \bigg| \leq 2^{3/2} \mathbb{P}(\|N_{I_{i,j}}\|\geq r_{s,y}).
$$
The Markov inequality yields
$$
\mathbb{P}(\|N_{I_{i,j}}\|\geq r_{s,y}) \leq \frac{\E \|N_{I_{i,j}}\|^{1/3}}{r_{s,y}^{1/3}} \leq \frac{s^{1/6} (\E \|N_{I_{i,j}}\|^2)^{1/6}}{d(\sqrt{1-s}y,\partial A)^{1/3}} \leq  \frac{ 2^{1/6} m^{1/6} s^{1/6} }{(1-s)^{1/6} d(y,\partial A/\sqrt{1-s})^{1/3}}.
$$
Hence, we obtain
$$
\bigg|\frac{\partial^2 f_{t,h,I}}{\partial y_i \partial y_j}(y) \bigg| \leq 2^{2/3}m^{1/6} \int_t^1 \frac{1}{s^{5/6}(1-s)^{1/6}} \frac{1}{d(y,\partial A/\sqrt{1-s})^{1/3}} \, \dint s, \quad y\in\R^m.
$$
The Cauchy-Schwarz inequality leads to
$$
\bigg(\frac{\partial^2 f_{t,h,I}}{\partial y_i \partial y_j}(y) \bigg)^2 \leq 2^{4/3} m^{1/3} \int_t^1 \frac{1}{s^{5/6} (1-s)^{1/3}} \, \dint s \int_t^1 \frac{1}{s^{5/6}} \frac{1}{d(y,\partial A/\sqrt{1-s})^{2/3}} \, \dint s, \quad y\in\R^m.
$$
Numerical integration shows that the first integral may be generously bounded by $7$ 
so that we obtain, together with Lemma \ref{lem:DistanceGaussian},
\begin{align*}
\E \bigg(\frac{\partial^2 f_{t,h,I}}{\partial y_i \partial y_j}(N_I) \bigg)^2
 & \leq 7 \cdot 2^{4/3}  m^{1/3} \int_t^1 \frac{1}{s^{5/6}} \E \frac{1}{d(N_{I},\partial A/\sqrt{1-s})^{2/3}} \, \dint s \allowdisplaybreaks\\
& \leq 7 \cdot 2^{4/3}  m^{1/3} \int_t^1 \frac{1}{s^{5/6}} \, \dint s \sup_{A'\subseteq \R^m\text{ convex}}\E \frac{1}{d(N_{I},\partial A')^{2/3}} \allowdisplaybreaks\\
&\leq 42\cdot  2^{4/3}  m^{1/3}  (1+4\sqrt{m}) \\
& \leq 530 m^{5/6},
\end{align*}
which completes the proof of Lemma \ref{lem:fNSigma}.
\end{proof}

\begin{proof}[Proof of Proposition \ref{lem:BoundD2F}]
First we prove the assertion for the special case $\Sigma=I$. For $i,j\in\{1,\hdots,m\}$ we have
\begin{align*}
& \E \bigg( \frac{\partial^2 f_{t,h,I}}{\partial y_i \partial y_j}(Y) \bigg)^2 \\
 & =\E \bigg( \frac{1}{2} \int_t^1 \frac{1}{s} \int_{\R^m} h(\sqrt{s} z + \sqrt{1-s} Y)  \frac{\partial^2 \varphi_{I} }{\partial y_i \partial y_j}(z)  \, \dint z \, \dint s \bigg)^2 \allowdisplaybreaks\\
& = \frac{1}{4} \int_t^1 \int_t^1 \frac{1}{s_1s_2} \int_{\R^m} \int_{\R^m} \E h(\sqrt{s_1}z_1+\sqrt{1-s_1}Y) h(\sqrt{s_2}z_2+\sqrt{1-s_2}Y)\\
& \hskip 5cm \times  \frac{\partial^2 \varphi_{I}}{\partial y_i \partial y_j}(z_1) \frac{\partial^2 \varphi_{I} }{\partial y_i \partial y_j}(z_2)  \, \dint z_2 \, \dint z_1 \, \dint s_2 \, \dint s_1 \allowdisplaybreaks\\
& = \frac{1}{4} \int_t^1 \int_t^1 \frac{1}{s_1s_2} \int_{\R^m} \int_{\R^m} \E h(\sqrt{s_1}z_1+\sqrt{1-s_1}N_I) h(\sqrt{s_2}z_2+\sqrt{1-s_2}N_I)\\
& \hskip 5cm \times  \frac{\partial^2 \varphi_{I}}{\partial y_i \partial y_j}(z_1) \frac{\partial^2 \varphi_{I} }{\partial y_i \partial y_j}(z_2)  \, \dint z_2 \, \dint z_1 \, \dint s_2 \, \dint s_1\\
& \quad + \frac{1}{4} \int_t^1 \int_t^1 \frac{1}{s_1s_2} \int_{\R^m} \int_{\R^m} \bigg(\E h(\sqrt{s_1}z_1+\sqrt{1-s_1}Y) h(\sqrt{s_2}z_2+\sqrt{1-s_2}Y)\\
& \hskip 5.3cm - \E h(\sqrt{s_1}z_1+\sqrt{1-s_1}N_I) h(\sqrt{s_2}z_2+\sqrt{1-s_2}N_I)\bigg) \\
& \hskip 4.8cm  \quad \times  \frac{\partial^2 \varphi_{I} }{\partial y_i \partial y_j}(z_1) \frac{\partial^2 \varphi_{I} }{\partial y_i \partial y_j}(z_2) \, \dint z_2 \, \dint z_1 \, \dint s_2 \, \dint s_1 \allowdisplaybreaks\\
& = \E\bigg( \frac{\partial^2 f_{t,h,I} }{\partial y_i \partial y_j}(N_I) \bigg)^2+ R_{ij},
\end{align*}
where $R_{ij}$ denotes the second four-fold integral in the penultimate equation. It follows from Lemma \ref{lem:fNSigma} that
$$
\E\sum_{i,j=1}^m \bigg( \frac{\partial^2 f_{t,h,I} }{\partial y_i \partial y_j}(N_I) \bigg)^2 \leq  530 m^{17/6}.
$$
For $h\in \mathcal{I}_m$ we have that
$$
h_{z_1,z_2,s_1,s_2}: \R^m\ni y\mapsto h(\sqrt{s_1}z_1+\sqrt{1-s_1}y) h(\sqrt{s_2}z_2+\sqrt{1-s_2}y)
$$
is the indicator function of a measurable convex set, whence
$$
\sum_{i,j=1}^m |R_{ij}| \leq M_2 (\log t)^2 d_{convex}(Y,N_I).
$$
Combining the previous estimates completes the proof of Proposition \ref{lem:BoundD2F} for the special case $\Sigma = I.$

For a positive definite $\Sigma\in\R^{m\times m}$ it follows from \eqref{eqn:IdentityfthSigma} that, for $y\in\R^m$,
$$
\operatorname{Hess} f_{t,h,\Sigma}(y) = \Sigma^{-1/2} \operatorname{Hess} f_{t,h\circ \Sigma^{1/2},I}(\Sigma^{-1/2}y) \Sigma^{-1/2}.
$$
Using the Hilbert-Schmidt norm $\|A\|_{H.S.}:=\sqrt{\sum_{i,j=1}^m a_{ij}^2}$ of a matrix $A=(a_{ij})_{i,j\in\{1,\hdots,m\}}\in\R^{m\times m}$ and the relation
$\|AB\|_{H.S.}\leq \|A\|_{op} \|B\|_{H.S.}$ for $A,B\in\R^{m\times m}$, we obtain
\begin{align*}
\E \sum_{i,j=1}^m \bigg( \frac{\partial^2 f_{t,h,\Sigma}}{\partial y_i \partial y_j}(Y) \bigg)^2 & = \E \| \operatorname{Hess} f_{t,h,\Sigma}(Y) \|^2_{H.S.} \\
& = \E \| \Sigma^{-1/2} \operatorname{Hess} f_{t,h\circ \Sigma^{1/2},I}(\Sigma^{-1/2} Y) \Sigma^{-1/2} \|^2_{H.S.} \allowdisplaybreaks\\
& \leq \|\Sigma^{-1/2}\|_{op}^4 \E \| \operatorname{Hess} f_{t,h\circ \Sigma^{1/2},I}(\Sigma^{-1/2} Y) \|^2_{H.S.} \\
& = \|\Sigma^{-1}\|_{op}^2 \E \sum_{i,j=1}^m \bigg( \frac{\partial^2 f_{t,h\circ\Sigma^{1/2},I}}{\partial y_i \partial y_j}(\Sigma^{-1/2}Y) \bigg)^2.
\end{align*}
Now the special case proven above (for $\Sigma=I$) and the observation that $d_{convex}(\Sigma^{-1/2}Y,N_I)=d_{convex}(Y,N_\Sigma)$ complete the proof
of Proposition \ref{lem:BoundD2F}.
\end{proof}

\section{Proofs of the main results}\label{sec:Proofs}

Throughout this section we assume that the reader is familiar with Malliavin calculus on the Poisson space. The Appendix provides the essential definitions and properties of Malliavin operators needed in the sequel.

\subsection{Proof of Theorem \ref{thm:GeneralMultivariateBound}}\label{subsec:ProofsSmooth}

The starting point for the proofs for the $d_3$- and the $d_2$-distance are the following quantitative bounds for the normal approximation of Poisson functionals, which were derived in \cite[Theorem 4.2]{PeccatiZheng} and \cite[Theorem 3.3]{PeccatiZheng} by a combination of Malliavin calculus with the interpolation method and Stein's method, respectively (see also \cite[Section 6]{BourguinPeccatiChapter}). For a definition of the inverse Ornstein-Uhlenbeck generator $L^{-1}$ we refer to \cite{LPS,PeccatiZheng} or the Appendix.

\begin{prop}\label{prop:PeccatiZheng}
Let $F=(F_1,\hdots,F_m)$, $m\in\N$, be a vector of Poisson functionals $F_1,\hdots,F_m\in\operatorname{dom}D$ with $\E F_i=0$, $i\in\{1,\hdots,m\}$, let $\Sigma=(\sigma_{ij})_{i,j\in\{1,\hdots,m\}}\in\R^{m\times m}$ be positive semi-definite, and put
\begin{align*}
\beta_1 & := \sqrt{\sum_{i,j=1}^m \E\bigg(\sigma_{ij} - \int_{\mathbb{X}} D_xF_i (-D_xL^{-1}F_j) \, \lambda(\dint x) \bigg)^2 } \allowdisplaybreaks  \\
\beta_2 & := \int_{\mathbb{X}} \E \bigg(\sum_{i=1}^m |D_xF_i| \bigg)^2 \sum_{j=1}^m |D_xL^{-1}F_j| \, \lambda(\dint x). \label{beta2}
\end{align*}
Then
\begin{align*}
d_3(F,N_{\Sigma}) & \leq \frac{m}{2} \beta_1  +\frac{1}{4} \beta_2.
\end{align*}
If, additionally, $\Sigma$ is positive definite, then
\begin{align*}
d_2(F,N_\Sigma) & \leq \|\Sigma^{-1}\|_{op} \|\Sigma\|_{op}^{1/2}\beta_1  +\frac{\sqrt{2\pi}}{8} \|\Sigma^{-1}\|_{op}^{3/2} \|\Sigma\|_{op} \beta_2.
\end{align*}
\end{prop}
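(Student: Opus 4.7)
The plan is to follow the Malliavin-Stein recipe that was established in \cite{PSTU} in the univariate case and extended to the multivariate setting in \cite{PeccatiZheng}. The starting point is the multivariate Stein equation
\[
\sum_{i=1}^m y_i \frac{\partial f}{\partial y_i}(y) - \sum_{i,j=1}^m \sigma_{ij}\frac{\partial^2 f}{\partial y_i\partial y_j}(y) = h(y) - \E h(N_\Sigma),
\]
which, for sufficiently smooth $h$, admits a solution $f_h$ whose derivatives can be controlled. For $h\in\mathcal{H}^{(3)}_m$ one gets uniform bounds of size $O(1)$ on the second and third partials (giving the $d_3$ constants with factor $m/2$ and $1/4$); for $h\in\mathcal{H}^{(2)}_m$ the analogous bounds involve $\|\Sigma^{-1}\|_{op}\|\Sigma\|_{op}^{1/2}$ for the Hessian and $\|\Sigma^{-1}\|_{op}^{3/2}\|\Sigma\|_{op}$ for the third derivatives, as worked out in \cite{Meckes2009,NPR}. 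These derivative bounds are precisely what produce the prefactors in the two asserted inequalities.

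Next I would apply the Poisson integration-by-parts identity $\E[F_i G(F)] = \E\int_{\mathbb{X}} D_x G(F)\,(-D_xL^{-1}F_i)\,\lambda(\mathrm{d}x)$ (valid since each $F_i$ is centered) with $G(F)=\partial_i f_h(F)$. To compare $D_x \partial_i f_h(F)$ with $\sum_j\partial^2_{ij}f_h(F)\,D_xF_j$, I would use the exact Taylor-type expansion
\[
D_x g(F) = g(F+D_xF) - g(F) = \sum_{j=1}^m \frac{\partial g}{\partial y_j}(F)\,D_xF_j + R_x(g,F),
\]
where $R_x$ is an integral remainder controlled by the Hessian of $g$ along the segment from $F$ to $F+D_xF$. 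Applied to $g=\partial_i f_h$, the leading term produces exactly $\sum_j \partial^2_{ij}f_h(F)\,D_xF_j\,(-D_xL^{-1}F_i)$ and the remainder involves the third partials of $f_h$ and a product of two components $D_xF_j\,D_xF_k$.

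Plugging these expansions into $\E h(F)-\E h(N_\Sigma)=\E[\sum_i F_i\partial_i f_h(F) - \sum_{i,j}\sigma_{ij}\partial^2_{ij}f_h(F)]$, the linear-in-$\sigma_{ij}$ part combines with the leading term of the expansion into
\[
\sum_{i,j=1}^m \E\left[\partial^2_{ij} f_h(F)\left(\int_{\mathbb{X}} D_xF_j\,(-D_xL^{-1}F_i)\,\lambda(\mathrm{d}x) - \sigma_{ij}\right)\right],
\]
which by Cauchy--Schwarz applied entry-wise and then across the $m\times m$ Hilbert--Schmidt pairing yields the $\beta_1$ contribution, with the appropriate constant ($m/2$ for $d_3$ or $\|\Sigma^{-1}\|_{op}\|\Sigma\|_{op}^{1/2}$ for $d_2$) coming directly from the bound on the Hessian of $f_h$. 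The remainder terms, bounded in absolute value by $\|D_xF\|^2\,|D_xL^{-1}F_i|$ times the sup-norm of the third partials of $f_h$, then integrate to $\frac{1}{4}\beta_2$ (for $d_3$) or $\frac{\sqrt{2\pi}}{8}\|\Sigma^{-1}\|_{op}^{3/2}\|\Sigma\|_{op}\beta_2$ (for $d_2$), after taking the supremum over $h$.

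The routine part is the integration-by-parts and Taylor expansion manipulations; the step that really carries the proof is the input derivative bounds on the Stein solution $f_h$, especially in the $d_2$ case, where one must exploit the sharp bound on $\mathrm{Hess}\,f_h$ via $\Sigma^{-1/2}$-conjugation as in \cite{Meckes2009}. Since the identity chain above is established in Theorems 3.3 and 4.2 of \cite{PeccatiZheng}, I would ultimately just invoke those results; the role of the sketch here is to make transparent which derivative-of-$f_h$ bound produces which factor in the final inequalities.
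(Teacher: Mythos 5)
Your proposal ends by invoking Theorems 3.3 and 4.2 of \cite{PeccatiZheng}, and that is exactly what the paper does: this proposition is not proved in the paper at all, it is quoted directly from \cite{PeccatiZheng}. In that narrow sense your conclusion is correct.

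However, the sketch you offer of \emph{how} those cited theorems are proved is not accurate for the $d_3$ bound, and the inaccuracy matters. You describe a single Stein-equation pipeline for both distances, with the $d_3$ constants $m/2$ and $1/4$ coming from $O(1)$ bounds on the second and third partials of the Stein solution $f_h$. But the $d_3$ inequality is stated for \emph{positive semi-definite} $\Sigma$, for which $N_\Sigma$ need not have a density and the classical Stein solution formula (which integrates the test function against $\varphi_\Sigma$, cf. \eqref{Steinsol}) is not directly available. Peccati and Zheng handle this precisely by proving the $d_3$ bound with the interpolation method --- differentiating $t\mapsto \E h(\sqrt{t}F+\sqrt{1-t}N_\Sigma)$ and bounding the derivative by Malliavin and Gaussian integration by parts directly against the second and third derivatives of $h$ itself, which is why those constants are $\Sigma$-independent. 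Stein's method, with the bounds via $\Sigma^{-1/2}$-conjugation that you describe, is used only for the $d_2$ bound, which is indeed stated under positive definiteness. The paper makes this distinction explicit when it introduces the proposition (``by a combination of Malliavin calculus with the interpolation method and Stein's method, respectively''). So if you were to actually carry out your sketch rather than cite the sources, you would be missing the argument that makes the semi-definite case of the $d_3$ bound work.
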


The main difficulty in evaluating these bounds is to control the behavior of the terms involving $L^{-1}$, which will be done in the same way as in \cite{LPS}. The following proposition collects two estimates from \cite[Lemma 3.4 and Proposition 4.1]{LPS}, which will play a crucial role in the sequel. This proposition and Proposition \ref{prop:Mehler} are consequences of Mehler's formula (see \cite[Section 3]{LPS}).

\begin{prop}\label{prop:LPS}
\begin{itemize}
\item [(a)] For a square integrable Poisson functional $F$ and $p \geq 1$,
$$
\E |D_xL^{-1}F|^p \leq \E |D_xF|^p, \quad \lambda\text{-a.e. } x\in\mathbb{X}
$$
and
$$
\E |D^2_{x,y}L^{-1}F|^p \leq \E |D^2_{x,y}F|^p, \quad \lambda^2\text{-a.e. } (x,y)\in\mathbb{X}^2.
$$
\item [(b)] For $F,G\in\operatorname{dom}D$ with $\E F=\E G=0$,
\begin{align*}
& \E \bigg(\Cov(F,G)-\int_{\mathbb{X}} D_xF (-D_xL^{-1}G) \, \lambda(\dint x) \bigg)^2\\
 & \leq 3 \int_{\mathbb{X}^3} \big[ \E (D^2_{x_1,x_3}F)^2 (D^2_{x_2,x_3}F)^2 \big]^{1/2} \big[ \E (D_{x_1}G)^2 (D_{x_2}G)^2 \big]^{1/2} \, \lambda^3(\dint(x_1,x_2,x_3)) \\
& \quad  + \int_{\mathbb{X}^3} \big[ \E (D_{x_1}F)^2 (D_{x_2}F)^2 \big]^{1/2} \big[ \E (D^2_{x_1,x_3}G)^2 (D^2_{x_2,x_3}G)^2 \big]^{1/2} \, \lambda^3(\dint(x_1,x_2,x_3))\\
& \quad  + \int_{\mathbb{X}^3} \big[ \E (D^2_{x_1,x_3}F)^2 (D^2_{x_2,x_3}F)^2 \big]^{1/2} \big[ \E (D^2_{x_1,x_3}G)^2 (D^2_{x_2,x_3}G)^2 \big]^{1/2} \, \lambda^3(\dint (x_1,x_2,x_3)).
\end{align*}
\end{itemize}
\end{prop}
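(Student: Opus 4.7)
The plan is to prove (a) and (b) using two complementary tools: the Mehler representation of the Ornstein--Uhlenbeck semigroup $(P_t)_{t \geq 0}$ on the Poisson space for part (a), and the combination of Malliavin integration by parts with the first-order Poincar\'e inequality for part (b).

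For part (a), I would start from the integral representation $-L^{-1}F = \int_0^\infty P_t F\, \dint t$, valid for mean-zero $F\in\operatorname{dom} D$. Mehler's formula realizes $P_t F(\eta) = \E[f(\eta^{(t)}) \mid \eta]$, where $\eta^{(t)}$ is obtained by independently thinning $\eta$ with survival probability $e^{-t}$ and superposing an independent Poisson process with intensity measure $(1-e^{-t})\lambda$. Conditioning on whether an added atom survives the thinning yields the commutation relations
$$
D_x P_t F = e^{-t} P_t D_x F, \qquad D^2_{x,y} P_t F = e^{-2t} P_t D^2_{x,y} F,
$$
and differentiating inside the integral gives $-D_xL^{-1}F = \int_0^\infty e^{-t} P_t D_x F\, \dint t$ and $-D^2_{x,y}L^{-1}F = \int_0^\infty e^{-2t} P_t D^2_{x,y}F\, \dint t$. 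Since $e^{-t}\, \dint t$ (respectively $2 e^{-2t}\, \dint t$) is a probability measure on $(0,\infty)$, Jensen's inequality combined with the fact that $P_t$ is a conditional expectation (and hence an $L^p$-contraction) gives $\E |D_xL^{-1}F|^p \leq \int_0^\infty e^{-t} \E |D_xF|^p\, \dint t = \E |D_xF|^p$, and similarly for $D^2$.

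For part (b), I would first invoke the Malliavin integration-by-parts identity together with the relation $\delta D = -L$ on centered functionals to establish
$$
\Cov(F,G) = \E \int_{\mathbb{X}} D_x F\cdot(-D_x L^{-1}G)\,\lambda(\dint x),
$$
so that the left-hand side of (b) equals $\Var Z$, where $Z := \int_{\mathbb{X}} D_yF(-D_yL^{-1}G)\,\lambda(\dint y)$. I would then apply the first-order Poincar\'e inequality $\Var Z \leq \int_{\mathbb{X}} \E(D_xZ)^2\,\lambda(\dint x)$, and expand $D_x Z$ via the discrete product rule
$$
D_x(UV) = (D_xU)\,V + U\,(D_xV) + (D_xU)(D_xV),
$$
with $U = D_yF$ and $V = -D_yL^{-1}G$, noting that $D_xU = D^2_{x,y}F$ and $D_xV = -D^2_{x,y}L^{-1}G$. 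This splits $D_xZ$ into three integrals over $y$; squaring, rewriting each square as a double integral over $(y_1,y_2)\in\mathbb{X}^2$, and applying Cauchy--Schwarz inside the expectation produces integrals over $\mathbb{X}^3$ of the form asserted in (b). The last step is to remove the $L^{-1}$ from the right-hand side using the pointwise bounds from (a) applied to the $G$-factors.

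The main obstacle will be controlling the two-point quantities $\E[(D_{y_1}L^{-1}G)^2(D_{y_2}L^{-1}G)^2]$ and $\E[(D^2_{y_1,x}L^{-1}G)^2(D^2_{y_2,x}L^{-1}G)^2]$, since these do not follow directly from (a). The strategy will be to apply the Mehler representation in both time variables and use Jensen twice together with the $L^p$-contractivity of $P_t$, which requires handling two independent integrations over $(0,\infty)$ with respect to exponential weights. The combinatorial constants $3$ and $1$ in the asserted bound arise from grouping like terms in the expansion of $(A+B+C)^2$ (with $A$, $B$, $C$ denoting the three integrals above) and from the symmetry $\int [\E(D^2_{x_1,x_3}F)^2(D^2_{x_2,x_3}F)^2]^{1/2}[\E(D_{x_1}G)^2(D_{x_2}G)^2]^{1/2}$ versus its companions; I expect the factor $3$ to emerge from the term $\E A^2$ generating three Cauchy--Schwarz splittings due to the interaction between $D^2F$ at different arguments.
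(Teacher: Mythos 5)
The paper never proves this proposition: it is imported verbatim from \cite{LPS} (Lemma 3.4 and Proposition 4.1 there), so the only meaningful comparison is with the proofs in that reference. Your argument for part (a) is exactly the one used there --- Mehler's formula, the commutation $D_xP_tF=e^{-t}P_tD_xF$, Jensen's inequality with respect to the probability measures $e^{-t}\,\dint t$ and $2e^{-2t}\,\dint t$, and the fact that $P_t$ is a conditional expectation of an equidistributed configuration --- and it is correct.

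For part (b) your skeleton (covariance identity, first order Poincar\'e inequality, product rule for $D$, Cauchy--Schwarz) is also the right one, but the step you yourself single out as ``the main obstacle'' is genuinely not closed by what you propose, and it is the technical heart of the proof. After applying Jensen to each squared time-integral and then conditional Jensen to each $P_t$, you are left with $\int_0^\infty\int_0^\infty e^{-t-s}\,\E[P_tA\cdot P_sB]\,\dint t\,\dint s$, where $A=(D_{y_1}G)^2$ and $B=(D_{y_2}G)^2$. The inequality $\E[P_tA\cdot P_sB]\le\E[AB]$ for nonnegative $A,B$ is false in general: by self-adjointness and the semigroup property, $\E[P_tA\cdot P_sB]=\E[A\,P_{t+s}B]$, which converges to $\E A\cdot\E B$ as $t+s\to\infty$ and therefore exceeds $\E[AB]$ whenever $A$ and $B$ are negatively correlated (indicators of complementary events give the extreme case). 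Using $L^p$-contractivity factorwise instead only yields $(\E A^2)^{1/2}(\E B^2)^{1/2}=(\E(D_{y_1}G)^4)^{1/2}(\E(D_{y_2}G)^4)^{1/2}$, which is strictly weaker than the mixed moment $\E[(D_{y_1}G)^2(D_{y_2}G)^2]$ appearing in the asserted bound. The two-point estimates for $D L^{-1}G$ and $D^2L^{-1}G$ are established in \cite{LPS} by a separate argument exploiting the explicit coupling behind Mehler's formula (one interpolating family $(\eta^{(t)})$ serving all space points simultaneously); they do not follow from semigroup contractivity alone, and without them your proof of (b) is incomplete. A secondary issue is the bookkeeping of the constants $3,1,1$: the crude bound $(A+B+C)^2\le 3(A^2+B^2+C^2)$ gives $3,3,3$, and the mechanism you sketch (``three Cauchy--Schwarz splittings from $\E A^2$'') does not correspond to any standard manipulation, so this too needs to be taken from, or reworked along the lines of, the proof in \cite{LPS}.
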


Combining Proposition \ref{prop:PeccatiZheng} and Proposition \ref{prop:LPS} yields the proof of Theorem \ref{thm:GeneralMultivariateBound}, which goes as follows.
\vskip.3cm

\noindent {\em Proof of Theorem \ref{thm:GeneralMultivariateBound}.}
From the triangle inequality we obtain
$$
\beta_1 \leq \sum_{i,j=1}^m |\sigma_{ij}-\Cov(F_i,F_j)|+\sqrt{\sum_{i,j=1}^m \E\bigg(\Cov(F_i,F_j) - \int_{\mathbb{X}} D_xF_i (-D_xL^{-1}F_j) \, \lambda(\dint x) \bigg)^2}.
$$
An application of Proposition \ref{prop:LPS}(b) yields that, for $i,j\in\{1,\hdots,m\}$,
\begin{align*}
& \E\bigg(\Cov(F_i,F_j) - \int_{\mathbb{X}} D_xF_i (-D_xL^{-1}F_j) \, \lambda(\dint x) \bigg)^2 \\
& \leq 3 \int_{\mathbb{X}^3} \big[ \E (D^2_{x_1,x_3}F_i)^2 (D^2_{x_2,x_3}F_i)^2 \big]^{1/2} \big[ \E (D_{x_1}F_j)^2 (D_{x_2}F_j)^2 \big]^{1/2} \, \lambda^3(\dint(x_1,x_2,x_3)) \\
& \quad  + \int_{\mathbb{X}^3} \big[ \E (D_{x_1}F_i)^2 (D_{x_2}F_i)^2 \big]^{1/2} \big[ \E (D^2_{x_1,x_3}F_j)^2 (D^2_{x_2,x_3}F_j)^2 \big]^{1/2} \, \lambda^3(\dint(x_1,x_2,x_3))\\
& \quad  + \int_{\mathbb{X}^3} \big[ \E (D^2_{x_1,x_3}F_i)^2 (D^2_{x_2,x_3}F_i)^2 \big]^{1/2} \big[ \E (D^2_{x_1,x_3}F_j)^2 (D^2_{x_2,x_3}F_j)^2 \big]^{1/2} \, \lambda^3(\dint(x_1,x_2,x_3))
\end{align*}
so that
\begin{equation}\label{eqn:Boundbeta1}
\beta_1 \leq \sum_{i,j=1}^m |\sigma_{ij}-\Cov(F_i,F_j)|+ 2\gamma_1+\gamma_2.
\end{equation}
It follows from H\"older's inequality and Proposition \ref{prop:LPS}(a) that
\begin{align*}
\beta_2  & \leq  m\int_{\mathbb{X}} \sum_{i=1}^m \big(\E |D_xF_i|^3 \big)^{2/3} \sum_{j=1}^m\big( \E|D_xL^{-1}F_j|^3 \big)^{1/3} \, \lambda(\dint x) \allowdisplaybreaks\\
& \leq m\int_{\mathbb{X}} \sum_{i=1}^m \big(\E|D_xF_i|^3 \big)^{2/3} \sum_{j=1}^m\big(\E |D_xF_j|^3 \big)^{1/3} \, \lambda(\dint x) \allowdisplaybreaks\\
& \leq m\int_{\mathbb{X}} m^{1/3} \bigg(\sum_{i=1}^m \E|D_xF_i|^3 \bigg)^{2/3} m^{2/3} \bigg(\sum_{j=1}^m\E |D_xF_j|^3 \bigg)^{1/3} \, \lambda(\dint x) \\
& = m^2 \int_{\mathbb{X}} \sum_{i=1}^m \E|D_xF_i|^3 \, \lambda(\dint x) = m^2 \gamma_3.
\end{align*}
Now Proposition \ref{prop:PeccatiZheng} completes the proof of Theorem \ref{thm:GeneralMultivariateBound}.
\qed

\vskip.3cm

\subsection{Proof of Theorem \ref{thm:Generaldconvex}}\label{subsec:ProofsNonSmooth}

Throughout this subsection we use several Malliavin operators, namely the already introduced difference operator $D$, the inverse Ornstein-Uhlenbeck generator $L^{-1}$, and the Skorohod integral $\delta$. Recall that we denote the domain of $D$ by $\operatorname{dom}D$ and we define  $\operatorname{dom}\delta$ similarly. For definitions we refer to the Appendix.

We prepare the proof of Theorem \ref{thm:Generaldconvex} by the following lemma.

\begin{lemm}\label{lem:DistanceBoundary}
For an $m$-dimensional random vector $Y$, a measurable convex set $A\subseteq\mathbb{R}^m$, a positive definite matrix $\Sigma\in\mathbb{R}^{m\times m}$, and $w\geq 0$,
$$
\mathbb{P}(d(Y,\partial A)\leq w) \leq 2 \sqrt{m} \|\Sigma^{-1/2}\|_{op} w + 2d_{convex}(Y,N_\Sigma).
$$
\end{lemm}

\begin{proof}
Using the abbreviations $A^w:=\{ y\in\R^m: d(y, A)\leq w \}$ and $A^{-w}:=\{ y\in A: d(y, \partial A)> w \}$, we obtain
\begin{align*}
\mathbb{P}(d(Y,\partial A)\leq w) & = \mathbb{P}(Y\in A^w) - \mathbb{P}(Y\in A^{-w})\\
& = \mathbb{P}(N_\Sigma \in A^w) - \mathbb{P}(N_\Sigma \in A^{-w}) + \mathbb{P}(Y \in A^w) - \mathbb{P}(N_\Sigma \in A^w)\\
& \quad + \mathbb{P}(N_\Sigma \in A^{-w})  - \mathbb{P}(Y \in A^{-w}).
\end{align*}
Since $A^w$ and $A^{-w}$ are measurable and convex, we have
$$
\max_{B \in \{ A^w, A^{-w} \} }   \big| \mathbb{P}(Y \in B) - \mathbb{P}(N_\Sigma \in B) \big| \leq d_{convex}(Y,N_{\Sigma})
$$
so that
\begin{align*}
\mathbb{P}(d(Y,\partial A)\leq w) & \leq \mathbb{P}(N_\Sigma\in A^w \setminus A^{-w}) + 2d_{convex}(Y,N_\Sigma) \\
& = \mathbb{P}(d(N_\Sigma,\partial A)\leq w) + 2d_{convex}(Y,N_\Sigma).
\end{align*}
Note that
\begin{align*}
d(N_I,\Sigma^{-1/2}\partial A) & = \sup_{y\in \Sigma^{-1/2}\partial A} \|N_I - y\| = \sup_{y\in \Sigma^{-1/2}\partial A} \|\Sigma^{-1/2} (\Sigma^{1/2}N_I - \Sigma^{1/2} y)\| \\
& = \sup_{y\in \partial A} \|\Sigma^{-1/2} (\Sigma^{1/2}N_I - y)\|\\
& \leq \|\Sigma^{-1/2}\|_{op} \sup_{y\in \partial A} \|\Sigma^{1/2}N_I - y\| = \|\Sigma^{-1/2}\|_{op} d(\Sigma^{1/2}N_I,\partial A).
\end{align*}
Together with Lemma \ref{lem:Goetze}, we see that
$$
\mathbb{P}(d(N_\Sigma,\partial A)\leq w) \leq \mathbb{P}( d(N_I,\Sigma^{-1/2}\partial A) \leq \|\Sigma^{-1/2}\|_{op} w) \leq 2 \sqrt{m} \|\Sigma^{-1/2}\|_{op} w,
$$
which completes the proof.
\end{proof}

The next proposition is an abstract formulation of one of the main ideas of the proof of Proposition \ref{prop:LPS}(b) (see also \cite[Lemma 21.4]{LastPenroseBook}).

\begin{prop}\label{prop:Mehler}
For a measurable function $h: \mathbb{X}^2\times\mathbf{N}\to[0,\infty)$, a Poisson functional $G\in\operatorname{dom} D$, and $p,q\in(0,\infty)$ with $1/p+1/q=1$,
\begin{align*}
& \int_{\mathbb{X}} \E \bigg( \int_{\mathbb{X}} h(x,y,\eta) |D_xL^{-1} G|  \, \lambda(\dint x) \bigg)^2 \, \lambda(\dint y) \\
& \leq \int_{\mathbb{X}^3} \big( \E h(x_1,y,\eta)^{p} h(x_2,y,\eta)^{p} \big)^{1/p} \, \big( \E |D_{x_1}G|^{q} |D_{x_2}G|^q \big)^{1/q} \, \lambda^3(\dint (x_1,x_2,y))
\end{align*}
and
\begin{align*}
& \int_{\mathbb{X}} \E \bigg( \int_{\mathbb{X}} h(x,y,\eta) |D_{x,y}^2L^{-1} G|  \, \lambda(\dint x) \bigg)^2 \, \lambda(\dint y) \\
& \leq \frac{1}{4} \int_{\mathbb{X}^3} \big( \E h(x_1,y,\eta)^{p} h(x_2,y,\eta)^{p} \big)^{1/p} \, \big( \E |D^2_{x_1,y}G|^{q} |D^2_{x_2,y}G|^q \big)^{1/q} \, \lambda^3(\dint (x_1,x_2,y)).
\end{align*}
\end{prop}

\begin{proof}
It follows from \cite[Corollary 3.3]{LPS} that
$$
|D_xL^{-1}G| \leq \int_0^1 |P_sD_xG| \, \dint s, \quad \lambda\text{-a.e. } x\in\mathbb{X}, \ \mathbb{P}\text{-a.s.,}
$$
and
$$
|D^2_{x,y}L^{-1}G| \leq \int_0^1 s \, |P_sD^2_{x,y}G| \, \dint s,  \quad \lambda^2\text{-a.e. } (x,y)\in\mathbb{X}^2, \ \mathbb{P}\text{-a.s.}
$$
For the definition of the operator $P_s$ we refer to \cite[Equation (3.1)]{LPS} or \cite[Equation (20.2)]{LastPenroseBook}. Now \cite[Lemma 21.4]{LastPenroseBook} yields the desired inequalities. Actually \cite[Lemma 21.4]{LastPenroseBook} deals only with $p=q=2$, but by using H\"older's inequality instead of the Cauchy-Schwarz inequality in the last two steps of its proof, one can extend it to $p,q\in(0,\infty)$ with $1/p+1/q=1$.
\end{proof}

\begin{proof}[Proof of Theorem \ref{thm:Generaldconvex}]
In the following five-part proof we may assume that $\gamma_1,...,\gamma_6<\infty$ since otherwise there is nothing to prove.
Throughout let $h: \R^{m}\to\R$ be the indicator function of a measurable convex set $K\subseteq \mathbb{R}^m$.

The idea of the proof goes as follows.  Put
\be \label{defgamma}
\gamma:= \|\Sigma^{-1}\|_{op} \max\bigg\{ \sum_{i,j=1}^m |\sigma_{ij} - \Cov(F_i,F_j)|, \gamma_1,\gamma_2, \gamma_3, \gamma_4,\gamma_5,\gamma_6 \bigg\}.
\ee
We first establish the bound
\be \label{maininequality}
|\E h_{t,\Sigma}(F) - \E h_{t,\Sigma}(N_{\Sigma})| \leq \bigg|J_1 -  \sum_{i,j=1}^m \sigma_{ij} \E\frac{\partial^2 f_{t,h,\Sigma}}{\partial y_i\partial y_j}(F)  \bigg| + |J_{2,1}| +|J_{2,2}|
\ee
where $J_1$, $J_{2,1}$, and $J_{2,2}$ are given below. We then show that the three terms on the right hand side of \eqref{maininequality} are each bounded by products of powers of $\gamma$ and factors such as $1/\sqrt{t}$, $|\log t| \sqrt{d_{convex}(F,N_\Sigma)}$, or $1/\sqrt{t} \cdot d_{convex}(F,N_\Sigma),$ and then choose $t$ appropriately.  Put
$$
\tilde{J}:= J_1 -  \sum_{i,j=1}^m \sigma_{ij} \E\frac{\partial^2 f_{t,h,\Sigma}}{\partial y_i\partial y_j}(F).
$$
 An intermediate step shows that the terms $|\tilde{J}|$ and $|J_{21}|$ are each bounded by a product involving  $\sqrt{\E \sum_{i,j=1}^m \big( \frac{\partial^2 f_{t,h,\Sigma}}{\partial y_i\partial y_j}(F)\big)^2}$, which, after applying Proposition \ref{lem:BoundD2F}, leads to the previously mentioned bounds.

\vskip.3cm
\noindent{\em Part (i): \  A key decomposition.} As noted in Subsection \ref{prooftech}, it follows from p.\ 1498 in \cite{PeccatiZheng} that
\begin{align*}
& |\E h_{t,\Sigma}(F) - \E h_{t,\Sigma}(N_{\Sigma})|\\
& = \bigg| \sum_{i,j=1}^m \sigma_{ij} \E\frac{\partial^2 f_{t,h,\Sigma}}{\partial y_i\partial y_j}(F) - \sum_{k=1}^m \E \int_{\mathbb{X}} D_x \frac{\partial f_{t,h,\Sigma}}{\partial y_k}(F) (-D_xL^{-1}F_k) \, \lambda(\dint x) \bigg|.
\end{align*}
The fundamental theorem of calculus yields
\begin{align*}
& \sum_{k=1}^m \E \int_{\mathbb{X}} D_x \frac{\partial f_{t,h,\Sigma}}{\partial y_k}(F) (-D_xL^{-1}F_k) \, \lambda(\dint x) \\
& =  \sum_{k=1}^m \E \int_{\mathbb{X}} \int_0^1 \sum_{j=1}^m \frac{\partial^2 f_{t,h,\Sigma}}{\partial y_j \partial y_k}(F+u D_xF) D_xF_j (-D_xL^{-1}F_k) \, \dint u\, \lambda(\dint x)  \allowdisplaybreaks\\
& = \sum_{j,k=1}^m  \E \int_{\mathbb{X}} \frac{\partial^2 f_{t,h,\Sigma}}{\partial y_j \partial y_k}(F) D_xF_j (-D_xL^{-1}F_k) \, \lambda(\dint x)  \\
& \quad + \sum_{j,k=1}^m  \E \int_{\mathbb{X}} \int_0^1 \bigg(\frac{\partial^2 f_{t,h,\Sigma}}{\partial y_j \partial y_k}(F+u D_xF)-\frac{\partial^2 f_{t,h,\Sigma}}{\partial y_j \partial y_k}(F)\bigg) D_xF_j (-D_xL^{-1}F_k) \, \dint u\, \lambda(\dint x)\\
& =: J_1 + J_2.
\end{align*}
Further applications of the fundamental theorem of calculus lead to
\begin{align*}
J_2 & = \sum_{j,k=1}^m \E \int_{\mathbb{X}} \int_0^1 \bigg( \frac{\partial^2 f_{t,h,\Sigma} }{\partial y_j \partial y_k}(F+u D_xF) - \frac{\partial^2 f_{t,h,\Sigma} }{\partial y_j \partial y_k}(F) \bigg) D_xF_j (-D_xL^{-1}F_k) \, \dint u \, \lambda(\dint x) \allowdisplaybreaks\\
& = \sum_{i,j,k=1}^m \E \int_{\mathbb{X}} \int_0^1 \int_0^1 \frac{\partial^3 f_{t,h,\Sigma} }{\partial y_i \partial y_j \partial y_k}(F+v u D_xF) u D_xF_i D_xF_j (-D_xL^{-1}F_k) \, \dint v \, \dint u \, \lambda(\dint x) \allowdisplaybreaks\\
& = \sum_{i,j,k=1}^m \E \int_{\mathbb{X}} \int_0^1 \int_0^1 \frac{\partial^3 f_{t,h,\Sigma} }{\partial y_i \partial y_j \partial y_k}(F+v D_xF) u D_xF_i D_xF_j (-D_xL^{-1}F_k) \, \dint v \, \dint u \, \lambda(\dint x)\\
& \quad + \sum_{i,j,k=1}^m \E \int_{\mathbb{X}} \int_0^1 \int_0^1 \bigg( \frac{\partial^3 f_{t,h,\Sigma} }{\partial y_i \partial y_j \partial y_k}(F+v u D_xF) - \frac{\partial^3 f_{t,h,\Sigma} }{\partial y_i \partial y_j \partial y_k}(F+v D_xF)\bigg)\\
& \hskip 7cm \times u D_xF_i D_xF_j (-D_xL^{-1}F_k) \, \dint v \, \dint u \, \lambda(\dint x) \allowdisplaybreaks \\
& = \frac{1}{2} \sum_{j,k=1}^m \E \int_{\mathbb{X}} \bigg(\frac{\partial^2 f_{t,h,\Sigma} }{ \partial y_j \partial y_k}(F+D_xF) -\frac{\partial^2 f_{t,h,\Sigma} }{ \partial y_j \partial y_k}(F) \bigg)  D_xF_j (-D_xL^{-1}F_k)  \, \lambda(\dint x)\\
& \quad + \sum_{i,j,k=1}^m \E \int_{\mathbb{X}} \int_0^1 \int_0^1 \bigg( \frac{\partial^3 f_{t,h,\Sigma} }{\partial y_i \partial y_j \partial y_k}(F+v u D_xF) - \frac{\partial^3 f_{t,h,\Sigma} }{\partial y_i \partial y_j \partial y_k}(F+v D_xF)\bigg)\\
& \hskip 7cm  \times u D_xF_i D_xF_j (-D_xL^{-1}F_k) \, \dint v \, \dint u \, \lambda(\dint x)\\
& =: J_{2,1}+J_{2,2},
\end{align*}
which gives \eqref{maininequality}.

\vskip.3cm

\noindent {\em Part (ii): \  A bound for $\tilde{J}$.}
Recalling the definition of $\beta_1$ in Proposition \ref{prop:PeccatiZheng} and applying the
Cauchy-Schwarz inequality, we obtain
\begin{equation}\label{eqn:FirstBoundJ1}
\begin{split}
|\tilde{J}| & \leq  \sqrt{\E \sum_{i,j=1}^m \bigg(\sigma_{ij}- \int_{\mathbb{X}} D_xF_j (-D_xL^{-1}F_k) \, \lambda(\dint x) \bigg)^2} \sqrt{\E \sum_{i,j=1}^m \bigg( \frac{\partial^2 f_{t,h,\Sigma}}{\partial y_i\partial y_j}(F)\bigg)^2} \\
& = \beta_{1} \sqrt{\E \sum_{i,j=1}^m \bigg( \frac{\partial^2 f_{t,h,\Sigma}}{\partial y_i\partial y_j}(F)\bigg)^2}.
\end{split}
\end{equation}
Now Proposition \ref{lem:BoundD2F} leads to
\begin{equation}\label{eqn:BoundD2fF}
\sqrt{\E \sum_{i,j=1}^m \bigg( \frac{\partial^2 f_{t,h,\Sigma}}{\partial y_i\partial y_j}(F)\bigg)^2} \leq \| \Sigma^{-1} \|_{op} \big(\sqrt{M_2} |\log t| \sqrt{d_{convex}(F,N_\Sigma)} + 24 m^{17/12}\big).
\end{equation}
Combining inequalities \eqref{eqn:Boundbeta1}, \eqref{eqn:FirstBoundJ1}, and \eqref{eqn:BoundD2fF} yields
\begin{equation}\label{eqn:BoundJ1}
\begin{split}
|\tilde{J}|  \leq &  \|\Sigma^{-1}\|_{op} \big(\sqrt{M_2} |\log t| \sqrt{d_{convex}(F,N_\Sigma)} + 24 m^{17/12} \big)\\
& \times \bigg( \sum_{i,j=1}^m |\sigma_{ij}-\Cov(F_i,F_j)|+2\gamma_1+\gamma_2 \bigg).
\end{split}
\end{equation}

\vskip.3cm
\noindent{ \em Part (iii): \  A bound for $J_{2,1}$.}
We start by rewriting $J_{2,1}$ as
$$
J_{2,1} = \frac{1}{2} \sum_{j,k=1}^m \E \int_{\mathbb{X}} D_x\frac{\partial^2 f_{t,h,\Sigma} }{ \partial y_j \partial y_k}(F) D_xF_j (-D_xL^{-1}F_k)  \, \lambda(\dint x).
$$
All third partial derivatives of $f_{t,h,\Sigma}$ are bounded by some constant (recall \eqref{eqn:BoundPartial3}), and thus
$$
\frac{\partial^2 f_{t,h,\Sigma} }{ \partial y_j \partial y_k}(F) \in\operatorname{dom} D, \quad j,k\in\{1,\hdots,m\}.
$$
From Lemma \ref{lem:BoundDelta} and the computation for $\E\delta( DF_j (-DL^{-1}F_k))^2$ below, one deduces that $DF_j (-DL^{-1}F_k)\in\operatorname{dom}\delta$.
It follows from integration by parts (see Lemma \ref{lem:PartialIntegration}) and the Cauchy-Schwarz inequality that
\begin{align*}
2 |J_{2,1}|
& =  \bigg| \sum_{j,k=1}^m \E  \frac{\partial^2 f_{t,h,\Sigma} }{ \partial y_j \partial y_k}(F) \delta( DF_j (-DL^{-1}F_k)) \, \lambda(\dint x) \bigg| \allowdisplaybreaks \\
& \leq  \bigg(\E \sum_{j,k=1}^m  \bigg(\frac{\partial^2 f_{t,h,\Sigma} }{ \partial y_j \partial y_k}(F)\bigg)^2\bigg)^{1/2} \bigg( \sum_{j,k=1}^m \E\delta( DF_j (-DL^{-1}F_k))^2\bigg)^{1/2}.
\end{align*}
By Proposition \ref{lem:BoundD2F} the first factor is bounded by
$$
\bigg(\E \sum_{j,k=1}^m  \bigg(\frac{\partial^2 f_{t,h,\Sigma} }{ \partial y_j \partial y_k}(F)\bigg)^2\bigg)^{1/2} \leq \|\Sigma^{-1}\|_{op} \big( \sqrt{M_2} |\log t| \sqrt{d_{convex}(F,N_\Sigma)} + 24 m^{17/12} \big).
$$
For the summands in the second factor it follows from Lemma \ref{lem:BoundDelta}  that
\begin{align*}
& \E\delta( DF_j (-DL^{-1}F_k))^2 \\
& \leq \int_{\mathbb{X}} \E (D_xF_j)^2 (-D_xL^{-1}F_k)^2 \, \lambda(\dint x) + \int_{\mathbb{X}^2} \E\big(D_y(D_xF_j (-D_xL^{-1}F_k)) \big)^2 \, \lambda^2(\dint (x,y)) \allowdisplaybreaks\\
& \leq \frac{1}{2} \int_{\mathbb{X}} \E (D_xF_j)^4 + \E (-D_xL^{-1}F_k)^4 \, \lambda(\dint x) \\
& \quad  + 3 \int_{\mathbb{X}^2} \E (D^2_{x,y}F_j)^2 (-D_xL^{-1}F_k)^2 + \E (D_xF_j)^2 (-D^2_{x,y}L^{-1}F_k)^2\\
& \hskip 2cm + \E (D^2_{x,y}F_j)^2 (-D^2_{x,y}L^{-1}F_k)^2  \, \lambda^2(\dint (x,y)),
\end{align*}
where we used the arithmetic geometric mean inequality $a_1 a_2 \leq \frac{1}{2}(a_1^2 + a_2^2)$ for $a_1, a_2 \in (0, \infty)$ as well as Lemma \ref{lem:DProduct} and Jensen's inequality. It follows from Proposition \ref{prop:LPS}(a) and the Cauchy-Schwarz inequality that
\begin{align*}
& \E\delta( DF_j (-DL^{-1}F_k))^2 \\
& \leq \frac{1}{2} \int_{\mathbb{X}} \E (D_xF_j)^4 + \E (D_xF_k)^4 \, \lambda(\dint x) \\
& \quad  + 3 \int_{\mathbb{X}^2} \big(\E (D^2_{x,y}F_j)^4\big)^{1/2}  \big(\E (D_xF_k)^4\big)^{1/2} + \big(\E (D_xF_j)^4\big)^{1/2} \big(\E (D^2_{x,y}F_k)^4\big)^{1/2}\\
& \hskip 2cm + \big(\E (D^2_{x,y}F_j)^4\big)^{1/2} \big(\E (D^2_{x,y}F_k)^4 \big)^{1/2}  \, \lambda^2(\dint (x,y)).
\end{align*}
Since $\gamma_4<\infty$, the right-hand side is finite, which implies that assumptions \eqref{eqn:LPlambda} and \eqref{eqn:AssumptionDg} are satisfied and, thus, justifies the previous applications of Lemma \ref{lem:PartialIntegration} and Lemma \ref{lem:BoundDelta}. Finally, combining the previous estimates yields
\begin{equation}\label{eqn:BoundJ21}
|J_{2,1}| \leq \frac{1}{2} \|\Sigma^{-1}\|_{op} (\sqrt{M_2} |\log t| \sqrt{d_{convex}(F,N_\Sigma)} + 24 m^{17/12} )  \gamma_4.
\end{equation}

\vskip.3cm
\noindent{\em Part (iv): \  A bound for $J_{2,2}$.} The bound for $|J_{2,2}|$ is more involved and goes as follows.  First, note that the triangle inequality and \eqref{eqn:D3ht} imply that
\begin{align*}
|J_{2,2}| & \leq \sum_{i,j,k=1}^m \E \int_{\mathbb{X}} \int_0^1 \int_0^1 \bigg| \frac{\partial^3 f_{t,h,\Sigma} }{\partial y_i \partial y_j \partial y_k}(F+v u D_xF) - \frac{\partial^3 f_{t,h,\Sigma} }{\partial y_i \partial y_j \partial y_k}(F+v D_xF)\bigg|\\
& \hskip 7cm \times u |D_xF_i \, D_xF_j \, D_xL^{-1}F_k| \, \dint v \, \dint u \, \lambda(\dint x) \allowdisplaybreaks\\
& \leq \sum_{i,j,k=1}^m \E \int_{\mathbb{X}} \int_0^1 \int_0^1 \frac{1}{2} \int_t^1 \int_{\R^m} \frac{\sqrt{1-s}}{s^{3/2}}\\
& \hskip 3cm \times \big|h(\sqrt{s}z+\sqrt{1-s} (F+v u D_xF)) - h(\sqrt{s}z+\sqrt{1-s}(F+v D_xF))\big|\\
& \hskip 3cm \times \bigg|\frac{\partial^3 \varphi_{\Sigma}}{\partial y_i \partial y_j \partial y_k}(z) \bigg| u |D_xF_i \, D_xF_j \, D_xL^{-1}F_k| \, \dint z \, \dint s\, \dint v \, \dint u \, \lambda(\dint x).
\end{align*}
Using the abbreviation
\begin{align*}
U_{ijk} &:= \sup_{\substack{z\in\R^m,\\ s,u\in[0,1]}} \E \int_{\mathbb{X}} \int_0^1 \big|h(\sqrt{s}z+\sqrt{1-s} (F+v u D_xF)) - h(\sqrt{s}z+\sqrt{1-s}(F+v D_xF))\big| \\
& \hskip 5cm \times |D_xF_i \, D_xF_j \, D_xL^{-1}F_k| \, \dint v  \, \lambda(\dint x)
\end{align*}
for $i,j,k\in\{1,\hdots,m\}$ and the Cauchy-Schwarz inequality, we obtain
\begin{align*}
|J_{2,2}| & \leq \frac{1}{2\sqrt{t}}\sum_{i,j,k=1}^m \int_{\R^m}  \bigg|\frac{\partial^3 \varphi_{\Sigma}}{\partial y_i \partial y_j \partial y_k}(z) \bigg| \, \dint z \,  U_{ijk} \\
& \leq \frac{1}{2\sqrt{t}} \int_{\R^m} \bigg( \sum_{i,j,k=1}^m \bigg(\frac{\partial^3 \varphi_{\Sigma}}{\partial y_i \partial y_j \partial y_k}(z) \bigg)^2 \bigg)^{1/2} \, \dint z \ \bigg( \sum_{i,j,k=1}^m U_{ijk}^2 \bigg)^{1/2}.
\end{align*}
By \eqref{eqn:BoundD3Sigma} and substitution the first integral satisfies the bound
\begin{align*}
& \int_{\R^m} \bigg( \sum_{i,j,k=1}^m \bigg(\frac{\partial^3 \varphi_{\Sigma}}{\partial y_i \partial y_j \partial y_k}(z) \bigg)^2 \bigg)^{1/2} \, \dint z \\
& \leq \frac{\|\Sigma^{-1}\|_{op}^{3/2}}{\sqrt{\det(\Sigma)}}\int_{\R^m} \bigg( \sum_{i,j,k=1}^m \bigg(\frac{\partial^3 \varphi_{I}}{\partial y_i \partial y_j \partial y_k}(\Sigma^{-1/2}z) \bigg)^2 \bigg)^{1/2} \, \dint z = M_3 \|\Sigma^{-1}\|_{op}^{3/2}
\end{align*}
with
$$
M_3 := \int_{\R^m} \bigg( \sum_{i,j,k=1}^m \bigg(\frac{\partial^3 \varphi_{I}}{\partial y_i \partial y_j \partial y_k}(z) \bigg)^2 \bigg)^{1/2} \, \dint z
$$
so that
\begin{equation}\label{eqn:BoundJ22M3}
|J_{2,2}| \leq  M_3 \|\Sigma^{-1}\|_{op}^{3/2} \frac{1}{2\sqrt{t}}  \bigg( \sum_{i,j,k=1}^m U_{ijk}^2 \bigg)^{1/2}.
\end{equation}
The Cauchy-Schwarz inequality yields that
\begin{align*}
M_3 &  = \int_{\R^m} \bigg( \sum_{i,j,k=1}^m \bigg(\frac{\partial^3 \varphi_{I}}{\partial y_i \partial y_j \partial y_k}(z) \frac{1}{\varphi_I(z)} \bigg)^2 \bigg)^{1/2} \varphi_I(z) \, \dint z \\
& \leq \bigg( \sum_{i,j,k=1}^m \int_{\R^m} \bigg(\frac{\partial^3 \varphi_{I}}{\partial y_i \partial y_j \partial y_k}(z) \frac{1}{\varphi_I(z)} \bigg)^2 \varphi_I(z) \, \dint z  \bigg)^{1/2}.
\end{align*}
Together with the observation that, for a standard univariate Gaussian random variable $N$ with density $\phi$,
\begin{align*}
\E[ (\phi'(N)/\phi(N))^2 ] & = \E[N^2]=1 \allowdisplaybreaks\\
\E[(\phi''(N)/\phi(N))^2] & =\E[(N^2-1)^2]=\E[N^4-2N^2+1]=2 \allowdisplaybreaks\\
\E[(\phi'''(N)/\phi(N))^2] & = \E[(N^3-3N)^2] = \E[N^6-6 N^4 + 9N^2] = 6
\end{align*}
this implies that
\begin{equation}\label{eqn:BoundM3}
M_3 \leq \sqrt{6} m^{3/2}.
\end{equation}

Next we bound $U_{ijk}$ for fixed $i,j,k\in\{1,\hdots,m\}$. We define $r(D_xF):=\frac{1}{\|D_xF\|} D_xF$. Using the substitution $w=v \|D_xF\|$ for the first term, we obtain
\begin{align*}
U_{ijk} & \leq	 \sup_{\substack{z\in\R^m,\\ s,u\in[0,1]}} \E \int_{\mathbb{X}} \int_0^{\|D_xF\|} \big| h(\sqrt{s} z+ \sqrt{1-s}(F+uwr(D_xF))) \\
& \hskip 5cm- h(\sqrt{s} z+ \sqrt{1-s}(F+wr(D_xF))) \big| \\
& \hskip 4cm \times \mathbf{1}\{\|D_xF\|\leq 1\} \frac{|D_xF_i|}{\|D_xF\|} |D_xF_j| \, |D_xL^{-1}F_k|  \, \dint w \, \lambda(\dint x) \\
& \quad + \sup_{\substack{z\in\R^m,\\ s,u\in[0,1]}} \E \int_{\mathbb{X}} \int_0^1 \big| h(\sqrt{s} z+ \sqrt{1-s}(F+uv D_xF )) \\
& \hskip 5cm- h(\sqrt{s} z+ \sqrt{1-s}(F+v D_xF )) \big| \\
& \hskip 3.5cm \times \mathbf{1}\{\|D_xF\|\geq 1\} |D_xF_i| \, \, |D_xF_j| \, |D_xL^{-1}F_k|  \, \dint v \, \lambda(\dint x) \\
& =: U_{ijk}^{(1)} + U_{ijk}^{(2)}.
\end{align*}
Recall that $h(\cdot)=\mathbf{1}\{\cdot\in K\}$ for a measurable convex set $K\subseteq\R^m$. We have that
\begin{align*}
U_{ijk}^{(2)} & \leq \E \int_{\mathbb{X}} \mathbf{1}\{\|D_xF\|\geq 1\} |D_xF_i| \, \, |D_xF_j| \, |D_xL^{-1}F_k|  \, \lambda(\dint x) \\
& \leq \E \int_{\mathbb{X}} \|D_xF\| \, |D_xF_i| \, \, |D_xF_j| \, |D_xL^{-1}F_k|  \, \lambda(\dint x) \allowdisplaybreaks\\
& \leq \sum_{\ell=1}^m \E \int_{\mathbb{X}} |D_xF_\ell| \, |D_xF_i| \, \, |D_xF_j| \, |D_xL^{-1}F_k|  \, \lambda(\dint x) \\
& \leq \frac{1}{4} \bigg( \sum_{\ell=1}^m \int_{\mathbb{X}} \E (D_xF_\ell)^4 \, \lambda(\dint x) + m \int_{\mathbb{X}} \E (D_xF_i)^4 + \E (D_xF_j)^4 + \E (D_xF_k)^4\, \lambda(\dint x) \bigg),
\end{align*}
where we used the arithmetic geometric mean inequality and Proposition \ref{prop:LPS}(a) in the last step. This implies
\begin{equation}\label{eqn:SumUijk2}
\sqrt{ \sum_{i,j,k=1}^m (U_{ijk}^{(2)})^2 } \leq \sum_{i,j,k=1}^m U_{ijk}^{(2)} \leq m^2 \gamma_4^2.
\end{equation}

Next we bound $\sum_{i,j,k=1}^m (U_{ijk}^{(1)})^2$.  We shall do this with the aid of Proposition \ref{prop:Mehler} and the Poincar\'e inequality. By way of preparation, define $K_{s,z}:=\frac{1}{\sqrt{1-s}} (K-\sqrt{s}z)$. Then
\begin{align*}
& \big| h(\sqrt{s} z+ \sqrt{1-s}(F+uwr(D_xF))) - h(\sqrt{s} z+ \sqrt{1-s}(F+wr(D_xF))) \big| \\
& = \big| \mathbf{1}\{ F+uwr(D_xF) \in K_{s,z}\}  - \mathbf{1}\{ F+wr(D_xF) \in K_{s,z} \} \big| \\
& \leq \mathbf{1}\{ d(F,\partial K_{s,z}) \leq w \}.
\end{align*}
Thus, we have that
\begin{align*}
U^{(1)}_{ijk} & \leq \sup_{z\in\R^m, s\in[0,1]} \E \int_{\mathbb{X}} \int_0^1 \mathbf{1}\{d(F,\partial K_{s,z})\leq w\} \mathbf{1}\{w\leq \|D_xF\|\} |D_xF_j \, D_xL^{-1}F_k| \, \dint w \, \lambda(\dint x) \\
& \leq \sup_{z\in\R^m, s\in[0,1]} \int_0^1 \mathbb{P}(d(F,\partial K_{s,z})\leq w) \E \int_{\mathbb{X}} \mathbf{1}\{w\leq \|D_xF\|\} |D_xF_j \, D_xL^{-1}F_k| \, \lambda(\dint x) \, \dint w \\
& \quad + \sup_{z\in\R^m, s\in[0,1]} \int_0^1 \E \mathbf{1}\{d(F,\partial K_{s,z})\leq w\}\\
& \hskip 3.5cm \times \bigg| \int_{\mathbb{X}} \mathbf{1}\{w\leq \|D_xF\|\} |D_xF_j \, D_xL^{-1}F_k|  \, \lambda(\dint x) \\
& \hskip 4cm - \E\int_{\mathbb{X}} \mathbf{1}\{w\leq \|D_xF\|\} |D_xF_j \, D_xL^{-1}F_k|  \, \lambda(\dint x)\bigg|  \, \dint w \\
& =: R^{(1)}_{jk} + R^{(2)}_{jk}.
\end{align*}
Now Lemma \ref{lem:DistanceBoundary}, the arithmetic geometric mean inequality, and Proposition \ref{prop:LPS}(a) imply that
\begin{align*}
R^{(1)}_{jk} & \leq \int_0^1 \bigg(  2\sqrt{m} \|\Sigma^{-1/2}\|_{op} w + 2d_{convex}(F,N_\Sigma) \bigg)\\
& \hskip 1.25cm \times \E \int_{\mathbb{X}} \mathbf{1}\{w\leq \|D_xF\|\} |D_xF_j \, D_xL^{-1}F_k|  \, \lambda(\dint x)  \, \dint w\\
& \leq 2\sqrt{m} \|\Sigma^{-1/2}\|_{op}  \int_{\mathbb{X}} \E \int_0^1 w \mathbf{1}\{w\leq \|D_xF\|\} \, \dint w \, |D_xF_j \, D_xL^{-1}F_k| \, \lambda(\dint x) \\
& \quad + 2 d_{convex}(F,N_\Sigma) \int_{\mathbb{X}} \E \int_0^1 \mathbf{1}\{w\leq \|D_xF\|\} \, \dint w \, |D_xF_j \, D_xL^{-1}F_k| \, \lambda(\dint x) \allowdisplaybreaks\\
& \leq \sqrt{m} \|\Sigma^{-1/2}\|_{op}  \int_{\mathbb{X}} \E \|D_xF\|^2 \, |D_xF_j \, D_xL^{-1}F_k| \, \lambda(\dint x) \\
& \quad + 2d_{convex}(F,N_\Sigma) \int_{\mathbb{X}} \E \|D_xF\| \, |D_xF_j \, D_xL^{-1}F_k|  \, \lambda(\dint x)\\
& \leq \sqrt{m} \|\Sigma^{-1/2}\|_{op} \bigg(\frac{1}{2} \sum_{\ell=1}^m \int_{\mathbb{X}} \E (D_xF_\ell)^4 \, \lambda(\dint x) + \frac{m}{4} \int_{\mathbb{X}} \E (D_xF_j)^4 + \E (D_xF_k)^4 \, \lambda(\dint x)\bigg) \\
& \quad +2d_{convex}(F,N_\Sigma) \frac{1}{3} \bigg( \sum_{\ell=1}^m \int_{\mathbb{X}} \E |D_xF_\ell|^3 \, \lambda(\dint x) + m \int_{\mathbb{X}} \E |D_xF_j|^3 + \E |D_xF_k|^3 \, \lambda(\dint x) \bigg).
\end{align*}
Consequently, we have that
\begin{equation}\label{eqn:SumRjk1}
\sqrt{m\sum_{j,k=1}^m (R_{jk}^{(1)})^2} \leq \sqrt{m} \sum_{j,k=1}^m R_{jk}^{(1)} \leq  m^2 \|\Sigma^{-1/2}\|_{op} \gamma_4^2 + 2 d_{convex}(F,N_\Sigma) m^{5/2} \gamma_3.
\end{equation}

For $R^{(2)}_{jk}$ we obtain by the Cauchy-Schwarz inequality and Lemma \ref{lem:DistanceBoundary} that
\begin{align*}
R^{(2)}_{jk} & \leq \sup_{z\in\R^m, s\in[0,1]} \int_0^1 \mathbb{P}(d(F,\partial K_{s,z})\leq w)^{1/2}\\
& \hskip 2.8cm \times \bigg(\E\bigg| \int_{\mathbb{X}} \mathbf{1}\{w\leq \|D_xF\|\} |D_xF_j \, D_xL^{-1}F_k|  \, \lambda(\dint x) \\
& \hskip 3.1cm \quad - \E\int_{\mathbb{X}} \mathbf{1}\{w\leq \|D_xF\|\} |D_xF_j \, D_xL^{-1}F_k|  \, \lambda(\dint x)\bigg|^2\bigg)^{1/2}  \, \dint w \allowdisplaybreaks\\
& \leq \int_0^1 \bigg( 2d_{convex}(F,N_\Sigma) + 2\sqrt{m} \|\Sigma^{-1/2}\|_{op} w \bigg)^{1/2}\\
& \hskip 2.5cm \times \bigg(\Var \bigg( \int_{\mathbb{X}} \mathbf{1}\{w\leq \|D_xF\|\} |D_xF_j \, D_xL^{-1}F_k|  \, \lambda(\dint x) \bigg)\bigg)^{1/2} \, \dint w \allowdisplaybreaks\\
& \leq \bigg( 2 d_{convex}(F,N_\Sigma) V_{jk}^{(1)} + 2\sqrt{m} \|\Sigma^{-1/2}\|_{op} V_{jk}^{(2)} \bigg)^{1/2}
\end{align*}
with
\begin{align*}
V_{jk}^{(1)} & := \int_0^1 \Var\bigg( \int_{\mathbb{X}} \mathbf{1}\{ w\leq \|D_xF\|\} |D_xF_j \, D_xL^{-1}F_k| \, \lambda(\dint x) \bigg) \, \dint w, \\
V_{jk}^{(2)} & := \int_0^1 w \Var\bigg( \int_{\mathbb{X}} \mathbf{1}\{ w\leq \|D_xF\|\} |D_xF_j \, D_xL^{-1}F_k| \, \lambda(\dint x) \bigg) \, \dint w.
\end{align*}
The existence of the variances in the definitions of $V_{jk}^{(1)}$ and $V_{jk}^{(2)}$ will be discussed below.

To  further bound $V_{jk}^{(1)}$ and $V_{jk}^{(2)}$ we will apply the Poincar\'e inequality (see Theorem \ref{thm:Poincare}). We prepare this by computing difference operators. We have that
\begin{align*}
|D_y\mathbf{1}\{w\leq\|D_xF\|\}| & = |\mathbf{1}\{w\leq\|D_xF\|+D_y\|D_xF\|\}-\mathbf{1}\{w\leq\|D_xF\|\}|\\
& \leq \mathbf{1}\{w\leq \|D_xF\|+\big|D_y\|D_xF\|\big|\}
\end{align*}
and
$$
\big|D_y\|D_xF\|\big| = \big| \|D_xF+D^2_{x,y}F\|-\|D_xF\| \big| \leq \|D^2_{x,y}F\|,
$$
whence
$$
|D_y\mathbf{1}\{w\leq\|D_xF\|\}| \leq \mathbf{1}\{w\leq \|D_xF\|+\|D^2_{x,y}F\|\} \mathbf{1}\{D^2_{x,y}F\neq \0\}.
$$
Together with Lemma \ref{lem:DProduct}, we obtain
\begin{align*}
& \bigg( D_y \bigg( \int_{\mathbb{X}} \mathbf{1}\{w\leq \|D_xF\|\} |D_xF_j \, D_xL^{-1}F_k| \, \lambda(\dint x) \bigg) \bigg)^2 \\
& \leq \bigg( \int_{\mathbb{X}} \mathbf{1}\{w\leq \|D_xF\|+\|D^2_{x,y}F\|\} \mathbf{1}\{D^2_{x,y}F\neq \0\}
\big( |D_xF_j \, D_xL^{-1}F_k| + \big|D_y |D_xF_j \, D_xL^{-1}F_k| \big| \big)  \\
& \quad \quad \quad \quad + \mathbf{1}\{w\leq \|D_xF\|\} \big|D_y |D_xF_j \, D_xL^{-1}F_k| \big| \, \lambda(\dint x)  \bigg)^2 \allowdisplaybreaks\\
& \leq 3 \int_{\mathbb{X}^2} \mathbf{1}\{w\leq \min\{\|D_{x_1}F\|+\|D^2_{x_1,y}F\|, \|D_{x_2}F\|+\|D^2_{x_2,y}F\| \}\} \mathbf{1}\{D^2_{x_1,y}F\neq \0, D^2_{x_2,y}F\neq \0\} \\
& \hskip 1.5cm \times \big( |D_{x_1}F_j \, D_{x_1}L^{-1}F_k| \, |D_{x_2}F_j \, D_{x_2}L^{-1}F_k|\\
& \hskip 2cm + \big|D_y|D_{x_1}F_j \, D_{x_1}L^{-1}F_k| \big| \, \big|D_y|D_{x_2}F_j \, D_{x_2}L^{-1}F_k|\big| \big) \, \lambda^2(\dint(x_1,x_2))\\
& \quad + 3 \int_{\mathbb{X}^2}  \mathbf{1}\{ w\leq \min\{\|D_{x_1}F\|,\|D_{x_2}F\|\} \} \big|D_y|D_{x_1}F_j \, D_{x_1}L^{-1}F_k|\big| \\
& \hskip 3cm \times \big|D_y|D_{x_2}F_j \, D_{x_2}L^{-1}F_k|\big| \, \lambda^2(\dint(x_1,x_2)).
\end{align*}
Now it follows from the Poincar\'e inequality that
\begin{align*}
V_{jk}^{(1)} & \leq 3 \int_{\mathbb{X}^3} \E \mathbf{1}\{D^2_{x_1,y}F\neq \0, D^2_{x_2,y}F\neq \0\} \min\{\|D_{x_1}F\|+\|D^2_{x_1,y}F\|, \|D_{x_2}F\|+\|D^2_{x_2,y}F\| \} \\
& \hskip 1.5cm \times \big( |D_{x_1}F_j \, D_{x_1}L^{-1}F_k| \, |D_{x_2}F_j \, D_{x_2}L^{-1}F_k|\\
& \hskip 2cm + \big|D_y|D_{x_1}F_j \, D_{x_1}L^{-1}F_k|\big| \, \big|D_y|D_{x_2}F_j \, D_{x_2}L^{-1}F_k|\big| \big) \, \lambda^3(\dint(x_1,x_2,y))\\
& \quad + 3 \int_{\mathbb{X}^3} \E \min\{\|D_{x_1}F\|,\|D_{x_2}F\|\} \big|D_y|D_{x_1}F_j \, D_{x_1}L^{-1}F_k|\big|  \\
& \hskip 3cm \times \big|D_y|D_{x_2}F_j \, D_{x_2}L^{-1}F_k|\big| \, \lambda^3(\dint(x_1,x_2,y))
\end{align*}
and
\begin{align*}
V_{jk}^{(2)} & \leq 3 \int_{\mathbb{X}^3} \E \mathbf{1}\{D^2_{x_1,y}F\neq \0, D^2_{x_2,y}F\neq \0\} \min\{\|D_{x_1}F\|^2+\|D^2_{x_1,y}F\|^2, \|D_{x_2}F\|^2+\|D^2_{x_2,y}F\|^2 \} \\
& \hskip 1.5cm \times \big( |D_{x_1}F_j \, D_{x_1}L^{-1}F_k| \, |D_{x_2}F_j \, D_{x_2}L^{-1}F_k|\\
& \hskip 2cm + \big|D_y|D_{x_1}F_j \, D_{x_1}L^{-1}F_k|\big| \, \big|D_y|D_{x_2}F_j \, D_{x_2}L^{-1}F_k|\big| \big) \, \lambda^3(\dint(x_1,x_2,y))\\
& \quad + \frac{3}{2} \int_{\mathbb{X}^3} \E \min\{\|D_{x_1}F\|^2,\|D_{x_2}F\|^2\} \big|D_y|D_{x_1}F_j \, D_{x_1}L^{-1}F_k|\big| \\
& \hskip 3cm \times \big|D_y|D_{x_2}F_j \, D_{x_2}L^{-1}F_k|\big| \, \lambda^3(\dint(x_1,x_2,y)).
\end{align*}
This implies that
\begin{align*}
V_{jk}^{(\ell)} & \leq 3 \int_{\mathbb{X}} \E \bigg( \int_{\mathbb{X}} \mathbf{1}\{D^2_{x,y}F\neq \0\} \sqrt{\|D_{x}F\|^\ell+\|D^2_{x,y}F\|^\ell} \, |D_{x}F_j \, D_{x}L^{-1}F_k| \, \lambda(\dint x)\bigg)^2\\
& \quad \quad \quad + \E \bigg( \int_{\mathbb{X}} \mathbf{1}\{D^2_{x,y}F\neq \0\} \sqrt{\|D_{x}F\|^\ell+\|D^2_{x,y}F\|^\ell} \, \big|D_y|D_{x}F_j \, D_{x}L^{-1}F_k|\big| \, \lambda(\dint x)\bigg)^2 \, \lambda(\dint y)\\
& \quad + \frac{3}{\ell}  \int_{\mathbb{X}} \E \bigg(\int_{\mathbb{X}} \sqrt{\|D_xF\|^\ell} \, \big|D_y|D_xF_j \, D_xL^{-1}F_k|\big| \, \lambda(\dint x) \bigg)^2 \, \lambda(\dint y) \\
& \leq 3 \int_{\mathbb{X}} \E \bigg( \int_{\mathbb{X}} \mathbf{1}\{D^2_{x,y}F\neq \0\} \sqrt{\|D_{x}F\|^\ell+\|D^2_{x,y}F\|^\ell} \, |D_{x}F_j \, D_{x}L^{-1}F_k| \, \lambda(\dint x)\bigg)^2 \lambda(\dint y)\\
& \quad  + \bigg(3 + \frac{3}{\ell}\bigg) \int_{\mathbb{X}} \E \bigg( \int_{\mathbb{X}} \sqrt{\|D_{x}F\|^\ell+\|D^2_{x,y}F\|^\ell} \, \big|D_y|D_{x}F_j \, D_{x}L^{-1}F_k|\big| \, \lambda(\dint x)\bigg)^2 \, \lambda(\dint y)
\end{align*}
for $\ell\in\{1,2\}$. Lemma \ref{lem:DProduct} yields
\begin{equation}\label{eqn:IteratedD}
\begin{split}
\big|D_y|D_xF_j D_xL^{-1}F_k|\big| & \leq |D_y(D_xF_j D_xL^{-1}F_k)|\\
& = |D^2_{x,y}F_j  D_xL^{-1}F_k + D_xF_j  D^2_{x,y}L^{-1}F_k + D^2_{x,y}F_j  D^2_{x,y}L^{-1}F_k|
\end{split}
\end{equation}
so that
\begin{align*}
V_{jk}^{(\ell)} & \leq 3 \int_{\mathbb{X}} \E \bigg( \int_{\mathbb{X}} \mathbf{1}\{D^2_{x,y}F\neq \0\} \sqrt{\|D_{x}F\|^\ell+\|D^2_{x,y}F\|^\ell} \, |D_{x}F_j \, D_{x}L^{-1}F_k| \, \lambda(\dint x)\bigg)^2  \, \lambda(\dint y)\\
& \quad + \bigg(9 + \frac{9}{\ell}\bigg) \int_{\mathbb{X}} \E \bigg( \int_{\mathbb{X}} \sqrt{\|D_{x}F\|^\ell+\|D^2_{x,y}F\|^\ell} \, |D^2_{x,y}F_j \, D_{x}L^{-1}F_k| \, \lambda(\dint x)\bigg)^2 \\
& \hskip 2.5cm + \E \bigg( \int_{\mathbb{X}} \sqrt{\|D_{x}F\|^\ell+\|D^2_{x,y}F\|^\ell} \, |D_{x}F_j \, D^2_{x,y}L^{-1}F_k| \, \lambda(\dint x)\bigg)^2 \\
& \hskip 2.5cm + \E \bigg( \int_{\mathbb{X}} \sqrt{\|D_{x}F\|^\ell+\|D^2_{x,y}F\|^\ell} \, |D^2_{x,y}F_j \, D^2_{x,y}L^{-1}F_k| \, \lambda(\dint x)\bigg)^2 \, \lambda(\dint y).
\end{align*}
Now it follows from Proposition \ref{prop:Mehler} with $p=3/2$ and $q=3$ that
\begin{align*}
V_{jk}^{(\ell)} & \leq 3 \int_{\mathbb{X}^3} \big( \E \mathbf{1}\{D^2_{x_1,y}F\neq \0, D^2_{x_2,y}F\neq \0\} \, \big(\|D_{x_1}F\|^\ell+\|D^2_{x_1,y}F\|^\ell\big)^{3/4} \\
& \hskip 2cm \times \big(\|D_{x_2}F\|^\ell+\|D^2_{x_2,y}F\|^\ell\big)^{3/4} |D_{x_1}F_j|^{3/2} \, |D_{x_2}F_j|^{3/2} \big)^{2/3}\\
& \hskip 1.5cm \times \big( \E |D_{x_1}F_k|^3 \, |D_{x_2}F_k|^3 \big)^{1/3} \, \lambda^3(\dint(x_1,x_2,y)) \\
& \quad + \bigg(9 + \frac{9}{\ell}\bigg) \int_{\mathbb{X}^3} \big( \E \big(\|D_{x_1}F\|^\ell+\|D^2_{x_1,y}F\|^\ell\big)^{3/4} \big(\|D_{x_2}F\|^\ell+\|D^2_{x_2,y}F\|^\ell\big)^{3/4} \\
& \hskip 3.5cm \times |D^2_{x_1,y}F_j|^{3/2} \, |D^2_{x_2,y}F_j|^{3/2} \big)^{2/3} \\
& \hskip 2.85cm \times \big( \E |D_{x_1}F_k|^3 \, |D_{x_2}F_k|^3 \big)^{1/3} \, \lambda^3(\dint(x_1,x_2,y)) \\
& \quad + \frac{1}{4} \bigg(9 + \frac{9}{\ell}\bigg) \int_{\mathbb{X}^3} \big( \E \big(\|D_{x_1}F\|^\ell+\|D^2_{x_1,y}F\|^\ell\big)^{3/4} \big(\|D_{x_2}F\|^\ell+\|D^2_{x_2,y}F\|^\ell\big)^{3/4} \\
& \hskip 3.5cm \times |D_{x_1}F_j|^{3/2} \, |D_{x_2}F_j|^{3/2} \big)^{2/3}\\
& \hskip 2.85cm \times \big( \E |D^2_{x_1,y}F_k|^3 \, |D^2_{x_2,y}F_k|^3 \big)^{1/3} \, \lambda^3(\dint(x_1,x_2,y)) \\
& \quad + \frac{1}{4} \bigg(9 + \frac{9}{\ell}\bigg) \int_{\mathbb{X}^3} \big( \E \big(\|D_{x_1}F\|^\ell+\|D^2_{x_1,y}F\|^\ell\big)^{3/4} \big(\|D_{x_2}F\|^\ell+\|D^2_{x_2,y}F\|^\ell\big)^{3/4} \\
& \hskip 3.5cm \times |D^2_{x_1,y}F_j|^{3/2} \, |D^2_{x_2,y}F_j|^{3/2} \big)^{2/3}\\
& \hskip 2.85cm \times \big( \E |D^2_{x_1,y}F_k|^3 \, |D^2_{x_2,y}F_k|^3 \big)^{1/3} \, \lambda^3(\dint(x_1,x_2,y)).
\end{align*}
A short computation using H\"older's inequality shows that
\begin{equation}\label{eqn:SumRjk2}
\begin{split}
\sum_{j,k=1}^m (R_{jk}^{(2)})^2 & \leq  2 d_{convex}(F,N_\Sigma) \sum_{j,k=1}^m  V_{jk}^{(1)} + 2\sqrt{m} \|\Sigma^{-1/2}\|_{op} \sum_{j,k=1}^m  V_{jk}^{(2)} \\
& \leq  2d_{convex}(F,N_\Sigma) \gamma_5^3 + 2\sqrt{m} \|\Sigma^{-1/2}\|_{op} \gamma_6^4.
\end{split}
\end{equation}

By the Poincar\'e inequality (see Theorem \ref{thm:Poincare}), we have that
\begin{align*}
& \E \bigg( \int_{\mathbb{X}} |D_xF_j| \, |D_xL^{-1}F_k| \, \lambda(\dint x) \bigg)^2 \\
& \leq \bigg(\E  \int_{\mathbb{X}} |D_xF_j| \, |D_xL^{-1}F_k| \, \lambda(\dint x) \bigg)^2 + \E \int_{\mathbb{X}} \bigg( \int_{\mathbb{X}} D_y\big(|D_xF_j| \, |D_xL^{-1}F_k|  \big) \, \lambda(\dint x)\bigg)^2 \, \lambda(\dint y).
\end{align*}
Here, the first term is bounded because $F_j,F_k\in \operatorname{dom}D$. Using \eqref{eqn:IteratedD} and Proposition \ref{prop:Mehler} in a similar way as above, one obtains that the second term can be bounded by $3(\gamma_1+\gamma_2)<\infty$. This guarantees that the variances in the definitions of $V_{jk}^{(1)}$ and $V_{jk}^{(2)}$ exist.

Combining
\begin{align*}
\sqrt{\sum_{i,j,k=1}^m U_{ijk}^2} & \leq \sqrt{\sum_{i,j,k=1}^m (U_{ijk}^{(1)})^2} + \sqrt{\sum_{i,j,k=1}^m (U_{ijk}^{(2)})^2}\\
& \leq  \sqrt{m\sum_{j,k=1}^m (R_{jk}^{(1)})^2} + \sqrt{m\sum_{j,k=1}^m (R_{jk}^{(2)})^2} + \sqrt{\sum_{i,j,k=1}^m (U_{ijk}^{(2)})^2}
\end{align*}
with \eqref{eqn:BoundJ22M3}, \eqref{eqn:SumRjk1}, \eqref{eqn:SumRjk2}, and \eqref{eqn:SumUijk2} leads to
\begin{equation} \label{eqn:BoundJ22}
\begin{split}
|J_{2,2}| & \leq \frac{M_3 \|\Sigma^{-1}\|_{op}^{3/2}}{2\sqrt{t}}\bigg( m^2 \|\Sigma^{-1/2}\|_{op}  \gamma_4^2 + 2 d_{convex}(F,N_\Sigma) m^{5/2} \gamma_3 \\
& \hskip 2.75cm +\sqrt{ 2m d_{convex}(F,N_\Sigma) \gamma_5^3 + 2  m^{3/2}  \|\Sigma^{-1/2}\|_{op} \gamma_6^4} + m^2\gamma_4^2 \bigg).
\end{split}
\end{equation}

\vskip.3cm
\noindent{\em Part (v): \  Putting the pieces together and choosing $t$.} Finally, we may evaluate the right-hand side of \eqref{maininequality}. Recalling the definition of $\gamma$ at \eqref{defgamma}, we may simplify \eqref{eqn:BoundJ1}, \eqref{eqn:BoundJ21}, and \eqref{eqn:BoundJ22} to
\begin{align*}
| \tilde{J}| &  \leq  4 \big(\sqrt{M_2} |\log t| \sqrt{d_{convex}(F,N_\Sigma)} + 24 m^{17/12} \big) \gamma,
\end{align*}
$$
|J_{2,1}| \leq \frac{1}{2} (\sqrt{M_2} |\log t| \sqrt{d_{convex}(F,N_\Sigma)} + 24 m^{17/12} )  \gamma,
$$
and
\begin{align*}
|J_{2,2}| & \leq \frac{M_3 }{2\sqrt{t}}\bigg(  m^{2} \gamma^2 + 2 m^{5/2} \|\Sigma^{-1/2}\|_{op}  d_{convex}(F,N_\Sigma) \gamma + \sqrt{2} \sqrt{m} \sqrt{d_{convex}(F,N_\Sigma)} \gamma^{3/2}\\
& \hskip 1.5cm  + \sqrt{2} m^{3/4}  \|\Sigma^{-1/2}\|_{op}^{-1/2} \gamma^{2}  +\|\Sigma^{-1/2}\|_{op}^{-1} m^2 \gamma^2 \bigg),
\end{align*}
where we used $\|\Sigma^{-1}\|_{op}=\|\Sigma^{-1/2}\|_{op}^2$ for the last inequality.

In view of \eqref{maininequality} and Lemma \ref{lem:SmoothingStandard},  we have that
\begin{align*}
& d_{convex}(F,N_\Sigma) \\
& \leq 6 \big( \sqrt{M_2} |\log t| \sqrt{d_{convex}(F,N_\Sigma)} +  24 m^{17/12} \big) \gamma\\
& \quad + \frac{4M_3}{3\sqrt{t}} \bigg( \frac{1}{ 2} m^{2} \gamma^2 + m^{5/2} \|\Sigma^{-1/2}\|_{op} d_{convex}(F,N_\Sigma) \gamma + \frac{1}{\sqrt{2}} \sqrt{m} \sqrt{d_{convex}(F,N_\Sigma)} \gamma^{3/2} \\
& \hskip 3cm + \frac{1}{ \sqrt{2} }  m^{3/4} \|\Sigma^{-1/2}\|_{op}^{-1/2} \gamma^{2} + \frac{1}{2} m^2 \|\Sigma^{-1/2}\|_{op}^{-1} \gamma^2 \bigg)\\
& \quad +  \frac{20}{\sqrt{2}} m  \frac{\sqrt{t}}{1-t}
\end{align*}
for $t\in(0,1)$. For $t\in(0,1/2)$ the inequalities $t^{1/4} |\log t|\leq 2$ and $1-t\geq 1/2$ yield that
\begin{align*}
& d_{convex}(F,N_\Sigma) \\
& \leq \frac{12\sqrt{M_2}}{t^{1/4}}\sqrt{d_{convex}(F,N_\Sigma)} \gamma + 144 m^{17/12}  \gamma\\
& \quad + \frac{4M_3}{3\sqrt{t}}\bigg( \frac{1}{2} m^{2}  \gamma^2 +  m^{5/2} \|\Sigma^{-1/2}\|_{op} d_{convex}(F,N_\Sigma) \gamma + \frac{1}{\sqrt{2}}\sqrt{m} \sqrt{d_{convex}(F,N_\Sigma)} \gamma^{3/2} \\
& \hskip 3cm + \frac{1}{\sqrt{2}} m^{3/4}  \|\Sigma^{-1/2}\|_{op}^{-1/2} \gamma^{2} + \frac{1}{2} m^2 \|\Sigma^{-1/2}\|_{op}^{-1} \gamma^2 \bigg)\\
& \quad + \frac{40}{\sqrt{2}} m \sqrt{t}.
\end{align*}
Assume that $\gamma<1/\sqrt{2}$ (otherwise \eqref{GMBdHl} is obviously true). Then, the choice $\sqrt{t}=\max\big\{ \frac{  \sqrt{2}}{80 m } d_{convex}(F,N_\Sigma),\gamma\big\}$ leads to
\begin{align*}
& d_{convex}(F,N_\Sigma) \\
& \leq \frac{12 \sqrt{80} \sqrt{M_2}} {2^{1/4}} \sqrt{m}  \gamma + 144 m^{17/12} \gamma\\
& \quad + \frac{4M_3}{3} \bigg( \frac{1}{ 2} m^{2} \gamma +  \frac{80}{\sqrt{2}} m^{7/2} \|\Sigma^{-1/2}\|_{op} \gamma \\
& \hskip 3cm + \frac{\sqrt{40}}{2^{1/4}}  m   \gamma + \frac{1}{ \sqrt{ 2} } m^{3/4} \|\Sigma^{-1/2}\|_{op}^{-1/2} \gamma + \frac{1}{2} m^2 \|\Sigma^{-1/2}\|_{op}^{-1} \gamma \bigg)\\
& \quad + \frac{40}{\sqrt{2} } m \gamma + \frac{1}{2} d_{convex}(F,N_\Sigma).
\end{align*}
Together with \eqref{eqn:M2} and \eqref{eqn:BoundM3} we obtain
\begin{align*}
d_{convex}(F,N_\Sigma)
& \leq 2 \bigg( \frac{48\sqrt{5}}{ 2^{1/4}} + 144 + \frac{4}{\sqrt{6}}+ \frac{320}{\sqrt{3}} + \frac{16\sqrt{5}}{\sqrt{3} \cdot 2^{1/4}}+\frac{4}{ \sqrt{3} } + \frac{2\sqrt{6}}{3} + \frac{40}{\sqrt{2}}\bigg)\\
& \quad \quad \times m^5 \max\{\|\Sigma^{-1/2}\|_{op}^{-1},\|\Sigma^{-1/2}\|_{op}\} \gamma\\
& \leq 941 m^5  \max\{\|\Sigma^{-1/2}\|_{op}^{-1},\|\Sigma^{-1/2}\|_{op}\} \gamma,
\end{align*}
which completes the proof.
\end{proof}

\section{Applications}\label{sec:Applications}

\subsection{Multivariate normal approximation of first order Poisson integrals}\label{subsec:I1}

In this subsection we apply our main results to first order Poisson integrals with respect to the Poisson process $\eta$ (as considered before). For $f\in L^1(\lambda)\cap L^2(\lambda)$ we define $I_1(f)$ to be the Poisson integral of $f$ (also called the Wiener-It\^o integral of $f$  in \cite{LastPenroseBook}), namely
$$
I_1(f) := \int_{\mathbb{X}} f(x) \, \eta(\dint x) - \int_{\mathbb{X}} f(x) \, \lambda(\dint x).
$$
If $\eta$ is a proper Poisson process, i.e., it has almost surely a representation $\eta=\sum_{i\in I} \delta_{X_i}$ with a countable collection $(X_i)_{i\in I}$ of random elements of $\mathbb{X}$, this can be rewritten as
$$
I_1(f)= \sum_{i\in I} f(X_i) - \int_{\mathbb{X}} f(x) \, \lambda(\dint x).
$$
Using approximation arguments in $L^2(\mathbb{P})$, one can extend the above definition to integrands $f\in L^2(\lambda)$. Note that, for all $f,g\in L^2(\lambda)$,
\begin{equation}\label{eqn:PropertiesI1}
\E I_1(f) = 0 \quad \text{and} \quad \E I_1(f) I_1(g) = \int_{\mathbb{X}} f(x) g(x) \, \lambda(\dint x).
\end{equation}
For an exact definition and more details on first order Poisson integrals with respect to Poisson processes we refer to \cite[Subsection 12.1]{LastPenroseBook}.

\begin{coro}\label{cor:FirstOrderIntegrals}
Let $F=(I_1(f_1),\hdots,I_1(f_m))$ with $f_1,\hdots,f_m\in L^2(\lambda)$ and $m\in\N$ and let $\Sigma=(\sigma_{ij})_{i,j\in\{1,\hdots,m\}}\in\R^{m\times m}$ be positive semi-definite.
\begin{itemize}
\item [(a)] It is the case that
$$
d_3(F,N_\Sigma)\leq \frac{m}{2} \sum_{i,j=1}^m \big| \sigma_{ij} - \int_{\mathbb{X}} f_i(x) f_j(x) \, \lambda(\dint x) \big| + \frac{m^2}{4} \sum_{i=1}^m \int_{\mathbb{X}} |f_i(x)|^3 \, \lambda(\dint x).
$$
\item [(b)] If $\Sigma$ is positive definite,
\begin{align*}
d_2(F,N_\Sigma) & \leq \|\Sigma^{-1}\|_{op} \|\Sigma\|_{op}^{1/2} \sum_{i,j=1}^m \big| \sigma_{ij} - \int_{\mathbb{X}} f_i(x) f_j(x) \, \lambda(\dint x) \big|\\
&\quad + \frac{\sqrt{2\pi}m^2}{8} \|\Sigma^{-1}\|_{op}^{3/2} \|\Sigma\|_{op} \sum_{i=1}^m \int_{\mathbb{X}} |f_i(x)|^3 \, \lambda(\dint x).
\end{align*}
\item [(c)] If $\Sigma$ is positive definite, then
\begin{align*}
d_{convex}(F,N_\Sigma) \leq & 941 m^{11/2} \max\{\|\Sigma^{-1/2}\|_{op}, \|\Sigma^{-1/2}\|_{op}^{3}\} \\
&\times \max\bigg\{ \sum_{i,j=1}^m \big| \sigma_{ij} - \int_{\mathbb{X}} f_i(x) f_j(x) \, \lambda(\dint x) \big|,
\sum_{i=1}^m \int_{\mathbb{X}} |f_i(x)|^3 \, \lambda(\dint x), \\
& \quad \quad \quad \quad \quad
\bigg(\sum_{i=1}^m \int_{\mathbb{X}} f_i(x)^4 \, \lambda(\dint x) \bigg)^{1/2} \bigg\}.
\end{align*}
\end{itemize}
\end{coro}

\begin{proof}
It follows from \eqref{eqn:PropertiesI1} that, for $i,j\in \{1,\hdots,m\}$,
$$
\Cov(I_1(f_i),I_1(f_j)) = \int_{\mathbb{X}} f_i(x) f_j(x) \, \lambda(\dint x).
$$
Moreover, it is well-known (see, for example, Eqn.\ (2.6) in \cite{LPS}) that, for $f\in L^2(\lambda)$ and $x,x_1,x_2\in\mathbb{X}$,
$$
D_xI_1(f)=f(x) \quad \text{and} \quad D^2_{x_1,x_2}I_1(f) =0.
$$
This implies that $\gamma_1=\gamma_2=\gamma_5 =\gamma_6=0$, $\gamma_3 = \sum_{i=1}^m \int_{\mathbb{X}} |f_i(x)|^3 \, \lambda(\dint x)$, and
$$
\gamma_4 = \sqrt{m} \bigg(\sum_{i=1}^m \int_{\mathbb{X}} f_i(x)^4 \, \lambda(\dint x) \bigg)^{1/2}.
$$
Now (a) and (b) are immediate consequences of Theorem \ref{thm:GeneralMultivariateBound}, while (c) follows from Theorem \ref{thm:Generaldconvex}.
One final technical remark is in order.  To ensure that $\gamma_5$ and  $\gamma_6$  vanish in the event that $\int_{\mathbb{X}} f_i(x)^6 \lambda(\dint x)$ is not finite for some $i \in \{1,...,m \}$, we use the convention that $0 \cdot \infty = 0$.  This convention is supported by the technical details of the proof of Theorem \ref{thm:Generaldconvex}; in particular we have $V_{jk}^{(1)} = V_{jk}^{(2)} = 0$ because the difference operator is a deterministic function.
\end{proof}

The idea of the following proof of Corollary \ref{cor:AnalogCLT} is to show that it is only a special case of Corollary \ref{cor:FirstOrderIntegrals}.

\begin{proof}[Proof of Corollary \ref{cor:AnalogCLT}.]
Let $\mathbb{X}=\R^m$ (equipped with its Borel $\sigma$-field) and $\lambda(\cdot)=s\mathbb{P}(X_1\in\cdot)$, i.e., $\lambda$ is $s$ times the probability measure of $X_1$. For $i\in\{1,\hdots,m\}$ let us denote by $\pi_i$ the projection $\R^m\ni (y_1,\hdots,y_m)\mapsto y_i$. Then we have that
$$
Z_s =(I_1(\pi_1/\sqrt{s}),\hdots,I_1(\pi_m/\sqrt{s})).
$$
Together with the observation that, for $i\in \{1,\hdots,m\}$, $p\in(0,\infty)$, and $s>0$,
$$
\int_{\mathbb{X}} |\pi_i(x)/\sqrt{s}|^p \, \lambda(\dint x) = \E |X_1^{(i)}|^p s^{1-p/2},
$$
we see that conclusions (a) and (b) of Corollary \ref{cor:AnalogCLT} follow from conclusions (a) and (b) of Corollary \ref{cor:FirstOrderIntegrals}, with $p = 3$,  whereas  conclusion (c) follows from its counterpart in  Corollary \ref{cor:FirstOrderIntegrals} with $p \in\{3, 4\}$.
\end{proof}

\subsection{Multivariate central limit theorems for intrinsic volumes of Boolean models}\label{subsec:BM}

In the following, we derive quantitative multivariate central limit theorems for Boolean models, extending previous findings in \cite{HLS2016} and \cite[Chapter 22]{LastPenroseBook}. Our proofs rely on the general bounds from Subsection \ref{sec:MainResults} as well as arguments from \cite{HLS2016} and \cite[Chapter 22]{LastPenroseBook}.

We denote by $\mathcal{K}^d$ the set of compact convex sets in $\R^d$, $d \geq 1$.  For a probability measure $\mathbb{Q}$ on $\mathcal{K}^d$ such that $\mathbb{Q}(\{\emptyset\})=0$ and $\gamma>0$ let $\eta$ be a Poisson process on $\R^d\times\mathcal{K}^d$ with intensity measure $\gamma \lambda_d\otimes \mathbb{Q}$, where $\lambda_d$ is Lebesgue measure on $\R^d$. Note that  $\eta$ is a stationary Poisson process in $\R^d$ with independent marks in $\mathcal{K}^d$ distributed according to $\mathbb{Q}$.
A random compact convex set $Z_0$ distributed according to $\mathbb{Q}$ is called the typical grain.  From $\eta$ we construct the random closed set
$$
Z:=\bigcup_{(x,K)\in\eta} (x+K),
$$
which is called the Boolean model. For more details on Boolean models and further references we refer to \cite{SW}.

By the convex ring $\mathcal{R}^d$ we mean the set of all finite unions of elements from $\mathcal{K}^d$.
Let $V_0,V_1,\hdots,V_d:\mathcal{R}^d\to\R$ be the intrinsic volumes (see, for example, \cite[Section 14.2]{SW} for a definition via the Steiner formula and additive extensions). In particular, for $A\in\mathcal{R}^d$, $V_d(A)$ is the volume of $A$, $V_{d-1}(A)$ is half the surface area of $A$ (if $A$ is the closure of its interior), and $V_0(A)$ is the Euler characteristic of $A$.

In the sequel we study the intersection of the Boolean model $Z$ with a compact convex observation window $W\in\mathcal{K}^d$. Note that $Z\cap W$ almost surely belongs to $\mathcal{R}^d$ if $\E V_i(Z_0)<\infty$ for $i\in\{1,\hdots,d\}$. Questions of interest include finding the fraction of $W$ covered by $Z$ and the surface area of $Z\cap W$.
We address both problems simultaneously by considering
$$
\mathcal{V}(Z\cap W):=(V_0(Z\cap W),V_1(Z\cap W),\hdots, V_d(Z\cap W)).
$$

Denote by $r(K)$ the inradius of $K\in\mathcal{K}^d$.  In \cite[Theorem 3.1]{HLS2016} it is shown that there exists a matrix $\Sigma=(\sigma_{i,j})_{i,j\in\{0,\hdots,d\}}\in\R^{(d+1)\times (d+1)}$ such that
$$
\Sigma(W):=\frac{1}{V_d(W)} (\Cov(V_i(Z\cap W),V_j(Z\cap W)))_{i,j\in\{0,\hdots,d\}} \to \Sigma \quad \text{as} \quad r(W)\to\infty
$$
if $\E V_i(Z_0)^2<\infty$ for $i\in\{1,\hdots,d\}$. If, additionally, $\mathbb{P}(V_d(Z_0)>0)>0$, the asymptotic covariance matrix $\Sigma$ is positive definite (see \cite[Theorem 4.1]{HLS2016}). We describe the asymptotic behavior of $\mathcal{V}(Z\cap W)$ as $r(W)\to\infty$ with respect to
$d_3, d_2,$ and $d_{convex}$.

\begin{theo}\label{thm:CLTBM}
\begin{itemize}
\item [(a)] If $\E V_i(Z_0)^3<\infty$ for $i\in\{1,\hdots,d\}$, there exists a constant $C_1\in(0,\infty)$ depending on $d$, $\gamma$, and $\mathbb{Q}$ such that
$$
d_3\bigg( \frac{\mathcal{V}(Z\cap W)-\E \mathcal{V}(Z\cap W)}{\sqrt{V_d(W)}} ,N_\Sigma\bigg) \leq C_1 \frac{1}{r(W)^{\min\{1,d/2\}}}
$$
for all $W\in\mathcal{K}^d$ with $r(W)\geq 1$.
\item [(b)] If $\E V_i(Z_0)^3<\infty$ for $i\in\{1,\hdots,d\}$ and $\mathbb{P}(V_d(Z_0)>0)>0$, there exists a constant $C_2\in(0,\infty)$ depending on $d$, $\gamma$, and $\mathbb{Q}$ such that
$$
d_2\bigg( \frac{\mathcal{V}(Z\cap W)-\E \mathcal{V}(Z\cap W)}{\sqrt{V_d(W)}} ,N_\Sigma\bigg) \leq C_2 \frac{1}{r(W)^{\min\{1,d/2\}}}
$$
for all $W\in\mathcal{K}^d$ with $r(W)\geq 1$.
\item [(c)] If $\E V_i(Z_0)^4<\infty$ for $i\in\{1,\hdots,d\}$ and $\mathbb{P}(V_d(Z_0)>0)>0$, there exists a constant $C_{3}\in(0,\infty)$ depending on $d$, $\gamma$, and $\mathbb{Q}$ such that
$$
d_{convex}\bigg( \frac{\mathcal{V}(Z\cap W)-\E \mathcal{V}(Z\cap W)}{\sqrt{V_d(W)}} ,N_\Sigma\bigg) \leq C_{3} \frac{1}{r(W)^{\min\{1,d/2\}}}
$$
for all $W\in\mathcal{K}^d$ with $r(W)\geq 1$.
\item [(d)] If $N_\Sigma$ is replaced by $N_{\Sigma(W)}$, the assertions (a)-(c) hold with the rate $1/\sqrt{V_d(W)}$.
\end{itemize}
\end{theo}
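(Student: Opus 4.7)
The plan is to apply Theorems~\ref{thm:GeneralMultivariateBound} and \ref{thm:GeneralMultivariateKolmogorov} with $m=d+1$ to the centered, rescaled vector
\[
F^{(W)} := \frac{1}{\sqrt{V_d(W)}}\bigl(V_0(Z\cap W) - \E V_0(Z\cap W), \hdots, V_d(Z\cap W) - \E V_d(Z\cap W)\bigr),
\]
whose underlying Poisson process $\eta$ lives on $\mathbb{X} = \R^d \times \mathcal{K}^d$ with intensity measure $\gamma\lambda_d \otimes \mathbb{Q}$. The right-hand sides of \eqref{GMB}, \eqref{GMBd2}, and \eqref{GMBdHl} then split into (i) a covariance discrepancy term $\sum_{i,j}|\sigma_{ij} - \Cov(F^{(W)}_i, F^{(W)}_j)|$, which measures the proximity of $\Sigma(W)$ to $\Sigma$, and (ii) the integrated moment terms $\gamma_1,\hdots,\gamma_5$. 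For (i) the $O(1/r(W))$ convergence rate of $\Sigma(W) \to \Sigma$ established in \cite[Theorem 3.1]{HLS2016} gives what is needed; in part (d) the comparison is made against $N_{\Sigma(W)}$, so this term is absent.

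For (ii), the key geometric input is the add-one cost identity
\[
D_{(x,K)} V_i(Z \cap W) = V_i((x+K) \cap W) - V_i((x+K) \cap Z \cap W),
\]
which follows from the additivity of intrinsic volumes on the convex ring. It shows that $D_{(x,K)} V_i(Z \cap W)$ vanishes unless $(x+K) \cap W \neq \emptyset$; iterating, $D^2_{(x_1,K_1),(x_2,K_2)} V_i(Z \cap W)$ is supported on the stricter localization event that $(x_1+K_1) \cap (x_2+K_2) \cap W \neq \emptyset$ and depends only on how $Z$ hits $(x_1+K_1) \cap (x_2+K_2)$. Standard estimates relating intrinsic volumes on the convex ring to those of their convex constituents, combined with the Steiner formula, yield moment bounds on $|D_{(x,K)} V_i(Z \cap W)|^p$ and $|D^2_{(x_1,K_1),(x_2,K_2)} V_i(Z \cap W)|^p$ in terms of polynomials in the intrinsic volumes of $K$, respectively of $K_1$ and $K_2$, in the spirit of \cite[Chapter 22]{LastPenroseBook} and \cite{HLS2016}.

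The $\gamma_j$ are then evaluated by integration against $\gamma\lambda_d \otimes \mathbb{Q}$. Translation invariance of $\lambda_d$ together with the localization of $D$ and $D^2$ implies that each spatial integration against $\lambda_d$ contributes a factor of order $V_d(W) + O(r(W)^{d-1})$, while the intersection constraints in the $D^2$-terms confine any subsequent spatial integrations to a bounded region around a previously fixed grain. After dividing by the normalization $V_d(W)^{p/2}$ built into each component of $F^{(W)}$ and integrating in $K$ against $\mathbb{Q}$, every $\gamma_j$ collapses to a constant multiple of $V_d(W)^{-1/2}$, with the constant depending only on $d$, $\gamma$, and certain moments $\E V_i(Z_0)^q$ of the typical grain. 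The moment hypotheses in the theorem are exactly those needed to keep these constants finite: third moments for $\gamma_3$ drive (a), (b) and the corresponding statements in (d), while the sixth-power integrands in $\gamma_4$ and $\gamma_5$, combined with the H\"older-type steps in the proof of Theorem~\ref{thm:GeneralMultivariateKolmogorov}, force the seventh-moment condition in (c).

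Substituting the $O(V_d(W)^{-1/2})$ bound for $\gamma_1,\hdots,\gamma_5$ together with the $O(1/r(W))$ covariance bound into \eqref{GMB}, \eqref{GMBd2}, and \eqref{GMBdHl} yields (a)--(c); using $V_d(W) \geq c_d\, r(W)^d$ for $W \in \mathcal{K}^d$ we have $V_d(W)^{-1/2} \leq c_d'\, r(W)^{-d/2}$, and the overall rate becomes $\max(r(W)^{-1}, r(W)^{-d/2}) = r(W)^{-\min(1,d/2)}$. Part (d) discards the covariance term and retains the sharper $V_d(W)^{-1/2}$ rate. The main obstacle will be making the moment estimates for $D^2$ precise enough to reveal the $V_d(W)^{-1/2}$ order of $\gamma_1$ and $\gamma_2$: these are three-fold integrals in which the triple localization imposed by the requirement that three shifted grains mutually intersect $W$ must be exploited carefully while preserving the correct dependence on the moments of $V_i(Z_0)$, and this is where the bulk of the technical work will lie.
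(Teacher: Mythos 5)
Your proposal follows essentially the same route as the paper: apply Theorems~\ref{thm:GeneralMultivariateBound} and \ref{thm:GeneralMultivariateKolmogorov} with $m=d+1$, bound $\gamma_1,\hdots,\gamma_5$ using moment estimates for $D$ and $D^2$ together with localization of $D^2$ to the event $(x_1+K_1)\cap(x_2+K_2)\cap W\neq\emptyset$, exploit translation invariance to extract one $W$-dependent factor and bound the rest by moments of the typical grain, and finish with $V_d(W)\gtrsim r(W)^d$ and the $O(1/r(W))$ covariance convergence from \cite[Theorem 3.1]{HLS2016}. The only thing you leave implicit is the precise technical device the paper uses to make the moment estimates and the spatial integrations tractable: the Wills functional $\overline{V}$, the bounds $\E|D^mV_i(Z\cap W)|^p\leq C^p\,\overline{V}(\cdot\cap W)^p$ (Lemma~\ref{lem:BoundDiffsBoolean}), the translative inequality $\int_{\R^d}\overline{V}((x+K)\cap L)\,\dint x\leq\overline{V}(K)\overline{V}(L)$ (Lemma~\ref{lem:Translation}), and the ratio bound $\overline{V}(W)/V_d(W)\leq C_d$ for $r(W)\geq 1$ — these are exactly what turns your heuristic ``one integration gives $V_d(W)+O(r(W)^{d-1})$, the rest are localized'' into a clean chain of inequalities, and they are drawn from the same sources you cite.
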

Theorem \ref{thm:CLTBM}(a) improves upon the moment assumptions of \cite[Theorem 9.1]{HLS2016} by requiring existence of third moments (i.e., $\E V_i(Z_0)^3<\infty$ for $i\in\{1,\hdots,d\}$) and not fourth moments. Parts (b) and (c) extend  \cite[Theorem 9.1]{HLS2016} to different distances, in particular, the non-smooth $d_{convex}$-distance.  The findings of \cite{HLS2016} as well as the univariate results in \cite{LastPenroseBook} consider so-called geometric functionals, which include intrinsic volumes. Theorem \ref{thm:CLTBM} could be also generalized to these functionals, but for the sake of simplicity we consider only intrinsic volumes. Since our proof of Theorem \ref{thm:CLTBM} is based on second order Poincar\'e inequalities, it does not require dealing with the whole chaos expansion as in \cite{HLS2016}. For previous results on volume and surface area of Boolean models we refer the reader to \cite{HLS2016}. Theorem \ref{thm:CLTBM} indicates that the slow convergence of $\Sigma(W)$ to $W$ weakens the rate of convergence for $d\geq 3$ (see also \cite[Remark 9.5]{HLS2016}). The rate of convergence $1/\sqrt{V_d(W)}$ for the distance to $N_{\Sigma(W)}$ is comparable to $1/\sqrt{n}$ in the classical central limit theorem for sums of $n$  i.i.d.\ random vectors and, thus, presumably optimal.

We prepare the proof of Theorem \ref{thm:CLTBM} by two lemmas. In the sequel, we use the Wills functional $\overline{V}(K):=\sum_{i=0}^d \kappa_{d-i} V_i(K)$ for $K\in\mathcal{K}^d$, where $\kappa_{d-i}$ is the volume of the $(d-i)$-dimensional unit ball. We write the difference operator $D$ with respect to the pair $(x, K)$, with $x \in \R^d, K\in\mathcal{K}^d$.

\begin{lemm}\label{lem:BoundDiffsBoolean}
There exists a constant $C\in(0,\infty)$ only depending on $d$, $\gamma$, and $\mathbb{Q}$ such that, for $x,x_1,x_2\in\R^d$, $K,K_1,K_2\in\mathcal{K}^d$, $i,j\in\{0,\hdots,d\}$, and $m,m_1,m_2\in\{1,\hdots,6\}$,
$$
\E |D_{(x,K)}V_i(Z\cap W)|^m \leq C^m \overline{V}((x+K)\cap W)^m,
$$
$$
\E |D^2_{(x_1,K_1),(x_2,K_2)}V_i(Z\cap W)|^m \leq C^m \overline{V}((x_1+K_1)\cap (x_2+K_2)\cap W)^m,
$$
\begin{align*}
& \E |D_{(x_1,K_1)}V_i(Z\cap W)|^{m_1} |D_{(x_2,K_2)}V_j(Z\cap W)|^{m_2}\\
& \leq C^{m_1+m_2} \overline{V}((x_1+K_1)\cap W)^{m_1} \overline{V}((x_2+K_2)\cap W)^{m_2},
\end{align*}
and
\begin{align*}
& \E |D^2_{(x_1,K_1),(x,K)}V_i(Z\cap W)|^{m_1} |D^2_{(x_2,K_2),(x,K)}V_j(Z\cap W)|^{m_2}\\
& \leq C^{m_1+m_2} \overline{V}((x_1+K_1)\cap (x+K)\cap  W)^{m_1} \overline{V}((x_2+K_2)\cap (x+K)\cap W)^{m_2}.
\end{align*}
\end{lemm}

\begin{proof}
For $m\in\{2,3\}$ or $i=j$ and $m_1=m_2=2$ this is shown in \cite{LastPenroseBook} in Proposition 22.4 in connection with (22.30) and (22.31) (see also \cite[Lemma 3.3]{HLS2016}), but the proof can be extended to $i\neq j$ and the other choices for $m,m_1,m_2$.
\end{proof}

Moreover, we will use the following translative integral formula from \cite[Proposition 22.5]{LastPenroseBook} and \cite[Lemma 3.4]{HLS2016}.

\begin{lemm}\label{lem:Translation}
For all $K,L\in \mathcal{K}^d$,
$$
\int_{\R^d} \overline{V}((x+K)\cap L) \, \dint x \leq \overline{V}(K) \overline{V}(L).
$$
\end{lemm}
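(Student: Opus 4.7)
The plan is to reduce the inequality involving the Wills functional $\overline{V}$ to the standard translative Fubini-type identity $\int_{\R^d} V_d((x+A)\cap B)\,\dint x = V_d(A) V_d(B)$ for $A,B\in\mathcal{K}^d$, which holds by Fubini's theorem applied to the indicator $\mathbf{1}\{y-x\in A\}\mathbf{1}\{y\in B\}$. The bridge between $\overline{V}$ and $V_d$ is provided by the Steiner formula: for any $M\in\mathcal{K}^d$ and $r\geq 0$,
$$
V_d(M+rB^d) = \sum_{i=0}^d r^{d-i}\kappa_{d-i} V_i(M),
$$
so that, specializing to $r=1$, one has $\overline{V}(M)=V_d(M+B^d)$, where $B^d$ denotes the Euclidean unit ball centered at the origin.

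First I would apply this identity with $M=(x+K)\cap L$, which is convex (possibly empty, in which case all terms vanish), giving
$$
\overline{V}((x+K)\cap L) = V_d\bigl(((x+K)\cap L)+B^d\bigr).
$$
The key geometric step, which I expect to be completely routine, is the set inclusion
$$
((x+K)\cap L)+B^d \subseteq (x+K+B^d)\cap(L+B^d),
$$
which is immediate: any point $z+b$ with $z\in(x+K)\cap L$ and $b\in B^d$ lies in both $x+K+B^d$ and $L+B^d$. Taking volumes and using the monotonicity of $V_d$, we obtain pointwise in $x$
$$
\overline{V}((x+K)\cap L) \leq V_d\bigl((x+(K+B^d))\cap(L+B^d)\bigr).
$$

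Integrating this inequality over $x\in\R^d$ and applying the translative integral formula for $V_d$ with $A=K+B^d$ and $B=L+B^d$ yields
$$
\int_{\R^d} \overline{V}((x+K)\cap L)\,\dint x \leq V_d(K+B^d)\,V_d(L+B^d) = \overline{V}(K)\,\overline{V}(L),
$$
which is the asserted bound. There is essentially no obstacle here: the Steiner formula converts the Wills functional into a volume of a parallel body, the containment lemma is trivial, and the translative $V_d$ identity is standard. The only thing to verify for completeness is that the measurability and integrability questions pose no problem, which follows since $(x+K)\cap L=\emptyset$ for $x$ outside the bounded set $L-K$, making the integrand compactly supported and bounded.
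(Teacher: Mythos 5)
Your proof is correct. The paper does not supply its own argument here --- it simply cites Proposition 22.5 of the Last--Penrose book and Lemma 3.4 of Hug--Last--Schulte --- so there is no in-text proof to compare against. Your argument is a clean, self-contained derivation: the identity $\overline{V}(M)=V_d(M+B^d)$ is exactly the Steiner formula at $r=1$, the inclusion $((x+K)\cap L)+B^d\subseteq(x+K+B^d)\cap(L+B^d)$ is elementary, and the final step is the standard Fubini identity $\int_{\R^d}V_d((x+A)\cap B)\,\dint x=V_d(A)V_d(B)$ applied with $A=K+B^d$ and $B=L+B^d$. This is essentially the same route the cited sources take (reduce the Wills functional to a volume of a parallel body and integrate), and you have correctly handled the empty-intersection case and the integrability remark. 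No gaps.
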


\begin{proof}[Proof of Theorem \ref{thm:CLTBM}]
We deduce Theorem \ref{thm:CLTBM} from Theorem \ref{thm:GeneralMultivariateBound} and Theorem \ref{thm:Generaldconvex} by bounding $\gamma_1,\hdots,\gamma_6$ from Subsection \ref{sec:MainResults} as follows.  We denote by $\tilde{\gamma}_1,\hdots,\tilde{\gamma}_6$ the corresponding terms without the normalization $1/\sqrt{V_d(W)}$ of the functionals. Without loss of generality we can assume that $\gamma=1$. In the sequel let $(Z_n)_{n\in\N}$ be independent copies of the typical grain $Z_0$. It follows from Lemma \ref{lem:BoundDiffsBoolean}, the monotonicity and the translation invariance of the Wills functional (i.e., $\overline{V}(K)\leq \overline{V}(L)$ for $K,L\in\mathcal{K}^d$ with $K \subseteq L$ and $\overline{V}(K+x)=\overline{V}(K)$ for $K\in\mathcal{K}^d$ and $x\in\R^d$), and Lemma \ref{lem:Translation} that
\begin{align*}
\tilde{\gamma}_1^2 & \leq (d+1)^2 C^4 \E \int_{(\R^d)^3} \overline{V}((x_1+Z_1)\cap (x_3+Z_3)\cap W) \overline{V}((x_2+Z_1)\cap (x_3+Z_3)\cap W)\\
& \hskip 4cm \overline{V}((x_1+Z_1)\cap W) \overline{V}((x_2+Z_2)\cap W) \, \dint(x_1,x_2,x_3) \\
& \leq (d+1)^2 C^4 \E \int_{(\R^d)^3} \overline{V}((x_1+Z_1)\cap (x_3+Z_3)\cap W) \overline{V}((x_2+Z_1)\cap (x_3+Z_3)\cap W)\\
& \hskip 4cm \overline{V}(Z_1) \overline{V}(Z_2) \, \dint(x_1,x_2,x_3) \allowdisplaybreaks\\
& \leq (d+1)^2 C^4 \E  \int_{\R^d} \overline{V}(Z_1)^2 \overline{V}(Z_2)^2 \overline{V}((x+Z_3)\cap W)^2 \, \dint x \allowdisplaybreaks\\
& \leq (d+1)^2 C^4 \E  \int_{\R^d} \overline{V}(Z_1)^2 \overline{V}(Z_2)^2 \overline{V}(Z_3) \overline{V}((x+Z_3)\cap W) \, \dint x \\
& \leq (d+1)^2 C^4 \E \overline{V}(Z_1)^2 \E \overline{V}(Z_2)^2 \E \overline{V}(Z_3)^2 \overline{V}(W) \\
& \leq (d+1)^2 C^4 (\E \overline{V}(Z_0)^2)^3 \overline{V}(W)
\end{align*}
and
\begin{align*}
\tilde{\gamma}_2^2 & \leq (d+1)^2 C^4 \E \int_{(\R^d)^3} \overline{V}((x_1+Z_1)\cap (x_3+Z_3)\cap W)^2 \overline{V}((x_2+Z_2)\cap (x_3+Z_3)\cap W)^2 \\
& \hskip 4cm \dint (x_1,x_2,x_3)\\
& \leq (d+1)^2 C^4 \E \int_{\R^d}  \overline{V}(Z_1)^2 \overline{V}(Z_2)^2 \overline{V}((x+Z_3)\cap W)^2 \, \dint x \allowdisplaybreaks\\
& \leq (d+1)^2 C^4 \E \overline{V}(Z_1)^2 \overline{V}(Z_2)^2 \overline{V}(Z_3)^2 \overline{V}(W)\\
& = (d+1)^2 C^4  (\E \overline{V}(Z_0)^2)^3 \overline{V}(W).
\end{align*}
Hence, we see that $\gamma_1$ and $\gamma_2$ are at most of the order $\sqrt{\overline{V}(W)}/V_d(W)$. From the same arguments as above we obtain that, for $k\in\N$,
\begin{equation}\label{eqn:BoundD1k}
\E \int_{\R^d} \overline{V}((x+Z_0)\cap W)^k \, \dint x
\leq \E \overline{V}(Z_0)^{k-1} \int_{\R^d} \overline{V}((x+Z_0)\cap W) \, \dint x \\
\leq \E \overline{V}(Z_0)^{k} \overline{V}(W),
\end{equation}
whence $\gamma_3$ is at most of order $\overline{V}(W)/V_d(W)^{3/2}$. We can also show that
\begin{align*}
& \E \int_{(\R^d)^2} \overline{V}((x_1+Z_1 ) \cap (x_2+Z_2)\cap W)^2\\
& \hskip1.5cm  \big(\overline{V}((x_1+Z_1 ) \cap (x_2+Z_2)\cap W)^2  + \overline{V}((x_1+Z_1 ) \cap W)^2  \big) \dint(x_1,x_2)\\
& \leq 2 \E \int_{(\R^d)^2} \overline{V}((x_1+Z_1 ) \cap (x_2+Z_2)\cap W) \overline{V}(Z_2) \overline{V}(Z_1)^2 \dint(x_1,x_2)\\
& \leq 2 \E \overline{V}(Z_1)^3 \overline{V}(Z_2)^2 \overline{V}(W)
\end{align*}
so that together with \eqref{eqn:BoundD1k}, we deduce that $\gamma_4$ is at most of order $\sqrt{\overline{V}(W)}/V_d(W)$.
Jensen's inequality and Lemma \ref{lem:BoundDiffsBoolean} lead to
\begin{equation}\label{eqn:NormDBM}
\E \| D_{(x,K)}\mathcal{V}(Z\cap W) \|^6 \leq (d+1)^3 C^6 \overline{V}((x+K)\cap W)^6
\end{equation}
for $x\in\mathbb{R}^d$ and $K\in\mathcal{K}^d$ and
\begin{equation}\label{eqn:NormD2BM}
\begin{split}
\E \| D^2_{(x_1,K_1),(x_2,K_2)}\mathcal{V}(Z\cap W) \|^6 & \leq (d+1)^3 C^6 \overline{V}((x_1+K_1)\cap (x_2+K_2)\cap W)^6\\
& \leq (d+1)^3 C^6 \overline{V}((x_1+K_1)\cap W)^6
\end{split}
\end{equation}
for $x_1,x_2\in\mathbb{R}^d$ and $K_1,K_2\in\mathcal{K}^d$. This implies that, for $x,y\in\mathbb{R}^d$, $K,L\in\mathcal{K}^d$, and $\ell\in\{1,2\}$,
\begin{align*}
& \E \big( \|D_{(x,K)}\mathcal{V}(Z\cap W)\|^\ell + \|D^2_{(x,K),(y,L)}\mathcal{V}(Z\cap W)\|^\ell \big)^3 \\
& \leq 4 \E \|D_{(x,K)}\mathcal{V}(Z\cap W)\|^{3\ell} + 4 \E \|D^2_{(x,K),(y,L)}\mathcal{V}(Z\cap W)\|^{3\ell} \\
& \leq 4 \big( \E \|D_{(x,K)}\mathcal{V}(Z\cap W)\|^{6}\big)^{\ell/2} + 4 \big(\E \|D^2_{(x,K),(y,L)}\mathcal{V}(Z\cap W)\|^6\big)^{\ell/2} \\
& \leq 8 (d+1)^{3\ell/2} C^{3\ell} \overline{V}((x+K)\cap W)^{3\ell}.
\end{align*}
From \cite[Lemma 3.2]{HLS2016} or \cite[Lemma 22.6]{LastPenroseBook} it follows that, for $i\in\{0,\hdots,d\}$, $x_1,x_2\in\R^d$, and $K_1,K_2\in\mathcal{K}^d$,
$$
D^2_{(x_1,K_1),(x_2,K_2)}V_i(Z\cap W) = V_i(Z\cap (x_1+K_1)\cap(x_2+K_2)\cap W) - V_i((x_1+K_1)\cap(x_2+K_2)\cap W).
$$
Consequently, we have that
\begin{align*}
{\bf 1}\{D^2_{(x_1,K_1),(x_2,K_2)}\mathcal{V}(Z\cap W)\neq \0\} & \leq {\bf 1}\{(x_1+K_1)\cap(x_2+K_2)\cap W\neq \emptyset\}\\
& \leq \overline{V}((x_1+K_1)\cap(x_2+K_2)\cap W).
\end{align*}
From H\"older's inequality, \eqref{eqn:NormDBM}, and \eqref{eqn:NormD2BM}, we obtain that, for $x_1,x_2,y\in\mathbb{R}^d$, $K_1,K_2,L\in\mathcal{K}^d$, and $\ell\in\{1,2\}$,
\begin{align*}
& \E {\bf 1}\{D^2_{(x_1,K_1),(y,L)}\mathcal{V}(Z\cap W)\neq \0, D^2_{(x_2,K_2),(y,L)}\mathcal{V}(Z\cap W)\neq \0 \}\\
&\quad \times \big( \|D_{(x_1,K_1)}\mathcal{V}(Z\cap W)\|^\ell + \|D^2_{(x_1,K_1),(y,L)}\mathcal{V}(Z\cap W)\|^\ell \big)^{3/4}\\
&\quad \times \big( \|D_{(x_2,K_2)}\mathcal{V}(Z\cap W)\|^\ell + \|D^2_{(x_2,K_2),(y,L)}\mathcal{V}(Z\cap W)\|^\ell \big)^{3/4}\\
&\quad \times |D_{(x_1,K_1)}V_i(Z\cap W)|^{3/2} \, |D_{(x_2,K_2)}V_i(Z\cap W)|^{3/2} \\
& \leq {\bf 1}\{(x_1+K_1)\cap(y+L)\cap W\neq \emptyset\} {\bf 1}\{(x_2+K_2)\cap(y+L)\cap W\neq \emptyset\} \\
&\quad  \times \E \big( \|D_{(x_1,K_1)}\mathcal{V}(Z\cap W)\|^{3\ell/4} + \|D^2_{(x_1,K_1),(y,L)}\mathcal{V}(Z\cap W)\|^{3\ell/4} \big)\\
&\quad \quad \quad \times \big( \|D_{(x_2,K_2)}\mathcal{V}(Z\cap W)\|^{3\ell/4} + \|D^2_{(x_2,K_2),(y,L)}\mathcal{V}(Z\cap W)\|^{3\ell/4} \big)\\
&\quad \quad \quad \times |D_{(x_1,K_1)}V_i(Z\cap W)|^{3/2} \, |D_{(x_2,K_2)}V_i(Z\cap W)|^{3/2} \allowdisplaybreaks\\
& \leq {\bf 1}\{(x_1+K_1)\cap(y+L)\cap W\neq \emptyset\} {\bf 1}\{(x_2+K_2)\cap(y+L)\cap W\neq \emptyset\} \\
&\quad \times \big( \big(\E \|D_{(x_1,K_1)}\mathcal{V}(Z\cap W)\|^{6} \big)^{\ell/8} + \big( \E \|D^2_{(x_1,K_1),(y,L)}\mathcal{V}(Z\cap W)\|^{6}\big)^{\ell/8} \big)\\
&\quad \times \big( \big(\E \|D_{(x_2,K_2)}\mathcal{V}(Z\cap W)\|^{6} \big)^{\ell/8} + \big( \E \|D^2_{(x_2,K_2),(y,L)}\mathcal{V}(Z\cap W)\|^{6}\big)^{\ell/8} \big)\\
& \quad \times \big( \E |D_{(x_1,K_1)}V_i(Z\cap W)|^3 \, |D_{(x_2,K_2)}V_i(Z\cap W)|^3 \big)^{1/2} \\
& \leq {\bf 1}\{(x_1+K_1)\cap(y+L)\cap W\neq \emptyset\} {\bf 1}\{(x_2+K_2)\cap(y+L)\cap W\neq \emptyset\}\\
& \quad \times 4 (d+1)^{3\ell/4} C^{3\ell/2} \overline{V}((x_1+K_1)\cap W)^{3\ell/4}  \overline{V}((x_2+K_2)\cap W)^{3\ell/4} \\
& \quad \times C^3 \overline{V}((x_1+K_1)\cap W)^{3/2} \overline{V}((x_2+K_2)\cap W)^{3/2}\\
& = {\bf 1}\{(x_1+K_1)\cap(y+L)\cap W\neq \emptyset\} {\bf 1}\{(x_2+K_2)\cap(y+L)\cap W\neq \emptyset\}\\
& \quad \times 4 (d+1)^{3\ell/4} C^{3+3\ell/2} \overline{V}((x_1+K_1)\cap W)^{3/2+3\ell/4}  \overline{V}((x_2+K_2)\cap W)^{3/2+3\ell/4}.
\end{align*}
Combining the previous estimates with Lemma \ref{lem:BoundDiffsBoolean} yields
\begin{align*}
\tilde{\gamma}_5^3 & \leq 3(d+1)^2 \E \int_{(\mathbb{R}^d)^3} \overline{V}((x_1+Z_1)\cap (x_3+Z_3)\cap W) \overline{V}((x_2+Z_2)\cap (x_3+Z_3)\cap W) \\
& \hskip 2cm \times 4^{2/3} (d+1)^{1/2} C^3 \overline{V}((x_1+Z_1)\cap W)^{3/2} \overline{V}((x_2+Z_2)\cap W)^{3/2}\\
& \hskip 2cm \times C^2 \overline{V}((x_1+Z_1)\cap W)  \overline{V}((x_2+Z_2)\cap W) \, \dint(x_1,x_2,x_3) \\
& \quad + 27 (d+1)^2 \E \int_{(\mathbb{R}^d)^3} 2 (d+1)^{1/2} C \overline{V}((x_1+Z_1)\cap W)^{1/2} \overline{V}((x_2+Z_2)\cap W)^{1/2} \\
& \hskip 2.5cm \times C^2 \overline{V}((x_1+Z_1)\cap(x_3+Z_3)\cap W) \overline{V}((x_2+Z_2)\cap(x_3+Z_3)\cap W)\\
& \hskip 2.5cm \times C^2 \overline{V}((x_1+Z_1)\cap W) \overline{V}((x_2+Z_2)\cap W) \, \dint(x_1,x_2,x_3)
\end{align*}
and
\begin{align*}
\tilde{\gamma}_6^4 & \leq 3(d+1)^2 \E \int_{(\mathbb{R}^d)^3} \overline{V}((x_1+Z_1)\cap (x_3+Z_3)\cap W) \overline{V}((x_2+Z_2)\cap (x_3+Z_3)\cap W) \\
& \hskip 2cm \times 4^{2/3} (d+1) C^4 \overline{V}((x_1+Z_1)\cap W)^{2} \overline{V}((x_2+Z_2)\cap W)^{2}\\
& \hskip 2cm \times C^2 \overline{V}((x_1+Z_1)\cap W)  \overline{V}((x_2+Z_2)\cap W) \, \dint(x_1,x_2,x_3) \\
& \quad + \frac{81}{4} (d+1)^2 \E \int_{(\mathbb{R}^d)^3} 2 (d+1) C^2 \overline{V}((x_1+Z_1)\cap W) \overline{V}((x_2+Z_2)\cap W) \\
& \hskip 2.5cm \times C^2 \overline{V}((x_1+Z_1)\cap(x_3+Z_3)\cap W) \overline{V}((x_2+Z_2)\cap(x_3+Z_3)\cap W)\\
& \hskip 2.5cm \times C^2 \overline{V}((x_1+Z_1)\cap W) \overline{V}((x_2+Z_2)\cap W) \, \dint(x_1,x_2,x_3).
\end{align*}
Monotonicity and translation invariance of the Wills functional and Lemma \ref{lem:Translation} imply
\begin{align*}
\tilde{\gamma}_5^3 & \leq 3\cdot 4^{2/3} (d+1)^{5/2} C^5 \E \overline{V}(Z_1)^{7/2} \overline{V}(Z_2)^{7/2} \overline{V}(Z_3)^2 \overline{V}(W)\\
& \quad + 54 (d+1)^{5/2} C^5 \E \overline{V}(Z_1)^{5/2} \overline{V}(Z_2)^{5/2} \overline{V}(Z_3)^2 \overline{V}(W)
\end{align*}
and
\begin{align*}
\tilde{\gamma}_6^4 & \leq 3\cdot 4^{2/3} (d+1)^3 C^6 \E \overline{V}(Z_1)^{4} \overline{V}(Z_2)^{4} \overline{V}(Z_3)^2 \overline{V}(W) \\
& \quad + \frac{81}{2} (d+1)^{3} C^6 \E \overline{V}(Z_1)^{3} \overline{V}(Z_2)^{3} \overline{V}(Z_3)^2 \overline{V}(W).
\end{align*}
Thus, $\gamma_5$ and $\gamma_6$ are at most of the orders $\overline{V}(W)^{1/3}/V_d(W)^{5/6}$ and $\overline{V}(W)^{1/4}/V_d(W)^{3/4}$, respectively. By \cite[Lemma 3.7]{HLS2016}, there exists a dimension dependent constant $C_d\in(0,\infty)$ such that
$$
\frac{\overline{V}(W)}{V_d(W)} \leq C_d \quad \text{for all} \quad W\in\mathcal{K}^d \quad \text{with} \quad r(W)\geq 1.
$$
This implies that $\gamma_1$, $\gamma_2$, $\gamma_3$, $\gamma_4$, $\gamma_5$, and $\gamma_6$ have at most the order $1/\sqrt{V_d(W)}$. It is known \cite[Theorem 3.1]{HLS2016} that there exists a constant $C_\Sigma\in(0,\infty)$ only depending on $d$, $\gamma$, and $\mathbb{Q}$ such that
$$
\bigg| \frac{\Cov(V_i(Z\cap W),V_j(Z\cap W))}{V_d(W)} - \sigma_{i,j} \bigg| \leq C_\Sigma \frac{1}{r(W)}
$$
for $i,j\in\{0,\hdots,d\}$ and $W\in\mathcal{K}^d$ with $r(W)\geq 1$. Now Theorem \ref{thm:GeneralMultivariateBound} and Theorem \ref{thm:Generaldconvex} complete the proof.
\end{proof}

\vskip.3cm

\subsection{Multivariate normal approximation for functionals of marked Poisson processes}\label{subsec:Marks}

In this subsection we establish a consequence of Theorem \ref{thm:GeneralMultivariateBound} and Theorem \ref{thm:Generaldconvex}, which can be seen as a multivariate version of Proposition 1.4 and Theorem 6.1 in \cite{LPS}. This result will be used heavily in the companion paper \cite{SchulteYukich2017}, in order to deduce rates of normal approximation for Poisson functionals which may be expressed as sums of stabilizing score functions.
We work in the context of marked Poisson processes, where $(\MM,\mathcal{F}_\MM,\la_\MM)$ denotes the probability space of marks.  Let $\widehat{\XX}:=\XX\times \MM$, put $\widehat{\mathcal{F}}$ to be the product $\sigma$-field of $\mathcal{F}$ and $\mathcal{F}_\MM$,  and let $\widehat{\la}$ be the product measure of $\la$ and $\la_{\MM}$. Here, $(\XX,\mathcal{F},\lambda)$ is as before.
For a given point $x \in \XX$ we denote by $M_x$ the corresponding random mark, which has distribution $\la_{\MM}$ and which is independent of everything else.

Let $F=(F_1,\hdots,F_m)$, $m\in\N$, be a vector of Poisson functionals $F_1,\hdots,F_m\in\operatorname{dom}D$ with $\E F_i=0$, $i\in\{1,\hdots,m\}$.  Define for all $c,p \in (0, \infty)$,
\begin{align*}
\Gamma_1(c,p) & :=    c^{\frac{2}{4+p}} \bigg(\sum_{i=1}^m\int_{\mathbb{X}} \bigg( \int_{\mathbb{X}} \PP(D_{(x_1,M_{x_1}), (x_2,M_{x_2})}^2 F_i\neq 0)^{\frac{p}{16+4p}} \, \lambda(\dint x_2) \bigg)^2 \, \lambda(\dint x_1) \bigg)^{1/2}  \allowdisplaybreaks \\
\Gamma_2(c,p) & :=  c^{\frac{3}{4+p}} \sum_{i=1}^m \int_{\mathbb{X}} \PP(D_{(x,M_x)} F_i\neq 0)^{\frac{1+p}{4+p}} \, \lambda(\dint x) \allowdisplaybreaks\\
\Gamma_3(c,p) & :=  c^{\frac{2}{4+p}} \bigg(\sum_{i=1}^m 9 \int_{\mathbb{X}^2} \mathbb{P}(D^2_{(x_1,M_{x_1}),(x_2,M_{x_2})}F_i\neq 0)^{\frac{p}{8+2p}} \, \lambda^2(\dint(x_1,x_2)) \\
& \hskip 2.5cm  +\int_{\mathbb{X}} \mathbb{P}(D_{(x,M_x)}F_i\neq 0)^{\frac{p}{4+p}} \, \lambda(\dint x) \bigg)^{1/2} \allowdisplaybreaks\\
\Gamma_4(c,p) & := c^{\frac{5}{3(4+p)}} \bigg(62 \int_{\mathbb{X}} \bigg( \int_{\mathbb{X}} \mathbb{P} (D^2_{(x_1,M_{x_1}),(x_2,M_{x_2})}F\neq\0)^{\frac{p-2}{24+6p}}  \, \lambda(\dint x_2) \bigg)^2  \, \lambda(\dint x_1) \bigg)^{1/3} \allowdisplaybreaks\\
\Gamma_5(c,p) & := c^{\frac{3}{2(4+p)}}\bigg(49 \int_{\mathbb{X}} \bigg( \int_{\mathbb{X}} \mathbb{P}(D^2_{(x_1,M_{x_1}),(x_2,M_{x_2})}F\neq\0)^{\frac{p-2}{24+6p}}  \, \lambda(\dint x_2) \bigg)^2  \, \lambda(\dint x_1)\bigg)^{1/4}.
\end{align*}

\begin{theo}\label{thm:generalStabilization}
Let $F=(F_1,\hdots,F_m)$, $m\in\N$, be a vector of Poisson functionals $F_1,\hdots,F_m\\ \in\operatorname{dom}D$ with $\E F_i=0$, $i\in\{1,\hdots,m\}$, and assume that there are constants $c,p\in (0,\infty)$ such that
\begin{equation}\label{eqn:AssumptionMomentBoundsD}
\E |D_{(x,M_x)} F_i|^{4+p} \leq c, \quad \lambda\text{-a.e. } x\in \mathbb{X},
\end{equation}
and
\begin{equation}\label{eqn:AssumptionMomentBoundsD2}
\E |D^2_{(x_1,M_{x_1}), (x_2,M_{x_2})} F_i|^{4+p} \leq c, \quad \lambda^2\text{-a.e. } (x_1, x_2)\in \mathbb{X}^2,
\end{equation}
for all $i\in\{1,\hdots,m\}$.
\begin{itemize}
\item [(a)] For positive semi-definite $\Sigma=(\sigma_{ij})_{i,j\in\{1,\hdots,m\}}\in\R^{m\times m}$,
$$
d_3(F,N_{\Sigma}) \leq \frac{m}{2} \sum_{i,j=1}^m |\sigma_{ij}-\Cov(F_i,F_j)| + \frac{3m^{3/2}}{2} \Gamma_1(c,p)  +  \frac{m^2}{4} \Gamma_2(c,p).
$$
\item [(b)] For positive definite $\Sigma\in\R^{m\times m}$,
\begin{align*}
d_2(F,N_{\Sigma}) & \leq \|\Sigma^{-1}\|_{op} \|\Sigma\|_{op}^{1/2} \sum_{i,j=1}^m |\sigma_{ij}-\Cov(F_i,F_j)| + 3\|\Sigma^{-1}\|_{op} \|\Sigma\|_{op}^{1/2} \sqrt{m}  \Gamma_1(c,p)\\
& \quad  + \frac{\sqrt{2\pi}}{8} \|\Sigma^{-1}\|_{op}^{3/2} \|\Sigma\|_{op}m^2\Gamma_2(c,p).
\end{align*}
\item[(c)] Let $\Sigma\in\R^{m\times m}$ be positive definite and assume that $p>2$. Then
\begin{align*}
d_{convex}(F, N_\Sigma)
 \leq & 941 m^5  \max\{\|\Sigma^{-1/2}\|_{op}, \|\Sigma^{-1/2}\|_{op}^{3} \}\\
& \times
 \max\bigg\{ \sum_{i,j\in\{1,\hdots,m\}} |\sigma_{ij}-\Cov(F_i,F_j)|, \sqrt{m} \Gamma_1(c,p), \Gamma_2(c,p),\\
& \hskip 2.5cm \sqrt{m} \Gamma_3(c,p), m^{5/6} \Gamma_4(c,p), m^{3/4} \Gamma_5(c,p) \bigg\}.
\end{align*}
\end{itemize}
\end{theo}

\begin{proof}
Obviously, Theorem \ref{thm:GeneralMultivariateBound} and Theorem \ref{thm:Generaldconvex} can be also applied to marked Poisson processes. By combining the product form of $\widehat{\lambda}$ with the Cauchy-Schwarz inequality we obtain
\begin{align*}
& \int_{\widehat{\mathbb{X}}^3} \big[ \E (D^2_{\wx_1,\wx_3}F_i)^2 (D^2_{\wx_2,\wx_3}F_i)^2 \big]^{1/2} \big[ \E (D_{\wx_1}F_j)^2 (D_{\wx_2}F_j)^2 \big]^{1/2} \, \widehat{\lambda}^3(\dint(\wx_1,\wx_2,\wx_3)) \\
& = \int_{\mathbb{X}^3} \int_{\MM^3} \big[ \E (D^2_{(x_1,m_1),(x_3,m_3)}F_i)^2 (D^2_{(x_2,m_2),(x_3,m_3)}F_i)^2 \big]^{1/2}\\
& \hskip 1.8cm \times \big[ \E (D_{(x_1,m_1)}F_j)^2 (D_{(x_2,m_2)}F_j)^2 \big]^{1/2}\, \lambda_\MM^3(\dint(m_1,m_2,m_3)) \, \lambda^3(\dint(x_1,x_2,x_3)) \allowdisplaybreaks \\
& \leq \int_{\mathbb{X}^3}  \bigg[ \int_{\MM^3} \E (D^2_{(x_1,m_1),(x_3,m_3)}F_i)^2 (D^2_{(x_2,m_2),(x_3,m_3)}F_i)^2 \, \lambda_\MM^3(\dint(m_1,m_2,m_3))  \bigg]^{1/2}\\
& \quad \quad \quad \quad \times \bigg[\int_{\MM^3} \E (D_{(x_1,m_1)}F_j)^2 (D_{(x_2,m_2)}F_j)^2 \, \lambda_\MM^3(\dint(m_1,m_2,m_3)) \bigg]^{1/2}  \, \lambda^3(\dint(x_1,x_2,x_3)) \allowdisplaybreaks\\
& = \int_{\mathbb{X}^3} \big[ \E (D^2_{(x_1,M_{x_1}),(x_3,M_{x_3})}F_i)^2 (D^2_{(x_2,M_{x_2}),(x_3,M_{x_3})}F_i)^2 \big]^{1/2}\\
& \hskip 1.2cm \times \big[ \E (D_{(x_1,M_{x_1})}F_j)^2 (D_{(x_2,M_{x_2})}F_j)^2 \big]^{1/2} \, \lambda^3(\dint(x_1,x_2,x_3)).
\end{align*}
Since we can apply the same arguments to the other terms, we see that the bounds from Theorem \ref{thm:GeneralMultivariateBound} and Theorem \ref{thm:Generaldconvex} are still valid if we integrate with respect to $\lambda$ and always replace $x_i$ by $(x_i,M_{x_i})$, where $M_{x_i}$ is an independent random mark. We denote the corresponding versions of $\gamma_1,\hdots,\gamma_6$ by $\hat{\gamma}_1,\hdots,\hat{\gamma}_6$.
For $i\in\{1,\hdots,m\}$ and $q\in(0,4+p)$ it follows from \eqref{eqn:AssumptionMomentBoundsD}, \eqref{eqn:AssumptionMomentBoundsD2}, and H\"older's inequality that
$$
\E |D_{(x,M_x)}F_i|^q \leq c^{\frac{q}{4+p}} \PP(D_{(x,M_x)}F_i\neq 0)^{\frac{4+p-q}{4+p}}, \quad \lambda\text{-a.e.}\ x\in\mathbb{X},
$$
and
$$
\E |D^2_{(x_1,M_{x_1}),(x_2,M_{x_2})}F_i|^q \leq c^{\frac{q}{4+p}} \PP(D^2_{(x_1,M_{x_1}),(x_2,M_{x_2})}F_i\neq 0)^{\frac{4+p-q}{4+p}}, \quad \lambda^2\text{-a.e.}\ (x_1,x_2)\in\mathbb{X}^2.
$$
Applying H\"older's inequality to separate expectations of products and using these inequalities, one obtains
\begin{equation}\label{eqn:Gammas1-4}
\hat{\gamma}_1\leq \sqrt{m} \Gamma_1(c,p), \ \hat{\gamma}_2\leq \sqrt{m} \Gamma_1(c,p), \ \hat{\gamma}_3\leq \Gamma_2(c,p), \ \text{and} \ \hat{\gamma}_4\leq \sqrt{m} \Gamma_3(c,p).
\end{equation}
Next we bound $\hat{\gamma}_5$ and $\hat{\gamma}_6$. Combining Jensen's inequality with \eqref{eqn:AssumptionMomentBoundsD} and \eqref{eqn:AssumptionMomentBoundsD2} yields that
\begin{equation}\label{eqn:BoundNormD}
\E \|D_{(x,M_x)}F\|^{4+p} \leq m^{\frac{4+p}{2}} c, \quad \lambda\text{-a.e.}\ x\in\mathbb{X},
\end{equation}
and
\begin{equation}\label{eqn:BoundNormD2}
\E \|D^2_{(x_1,M_{x_1}),(x_2,M_{x_2})}F\|^{4+p} \leq m^{\frac{4+p}{2}} c, \quad \lambda^2\text{-a.e.}\ (x_1,x_2)\in\mathbb{X}^2.
\end{equation}
Consequently, we have that, for $\ell\in\{1,2\}$ and $\lambda^2\text{-a.e.}\ (x_1,x_2)\in\mathbb{X}^2$,
\begin{align*}
& \E \big( \|D_{(x_1,M_{x_1})}F\|^\ell + \| D^2_{(x_1,M_{x_1}),(x_2,M_{x_2})}F \|^\ell \big)^3 \\
& \leq 4 \E \|D_{(x_1,M_{x_1})}F\|^{3\ell} + 4 \E \| D^2_{(x_1,M_{x_1}),(x_2,M_{x_2})}F \|^{3\ell} \\
& \leq 4 \big( \E \|D_{(x_1,M_{x_1})}F\|^{4+p} \big)^{\frac{3\ell}{4+p}} + 4 \big( \E \|D^2_{(x_1,M_{x_1}),(x_2,M_{x_2})}F\|^{4+p} \big)^{\frac{3\ell}{4+p}} \leq 8 m^{\frac{3\ell}{2}} c^{\frac{3\ell}{4+p}}.
\end{align*}
For $p>2$, $\ell\in\{1,2\}$, and $\lambda^3\text{-a.e.}\ (x_1,x_2,y)\in\mathbb{X}^3$, H\"older's inequality with $q_1=\frac{4+p}{p-2}$, $q_2=q_3=\frac{4}{3\ell}(4+p)$, and $q_4=q_5=\frac{2}{3}(4+p)$ (and $q_6=\frac{2}{3}(4+p)$ for the factor one if $\ell=1$) as well as \eqref{eqn:BoundNormD} and \eqref{eqn:BoundNormD2} lead to
\begin{align*}
& \E \mathbf{1}\{D^2_{(x_1,M_{x_1}),(y,M_y)}F\neq \0, D^2_{(x_2,M_{x_2}),(y,M_y)}F\neq \0\} \big(\|D_{(x_1,M_{x_1})}F\|^\ell+\|D^2_{(x_1,M_{x_1}),(y,M_y)}F\|^\ell\big)^{3/4}\\
& \quad \times \big(\|D_{(x_2,M_{x_2})}F\|^\ell+\|D^2_{(x_2,M_{x_2}),(y,M_y)}F\|^\ell\big)^{3/4}  |D_{(x_1,M_{x_1})}F_i|^{3/2} \, |D_{(x_2,M_{x_2})}F_i|^{3/2} \\
& \leq \E \mathbf{1}\{D^2_{(x_1,M_{x_1}),(y,M_y)}F\neq \0, D^2_{(x_2,M_{x_2}),(y,M_y)}F\neq \0\}\\
& \quad \quad \times \big(\|D_{(x_1,M_{x_1})}F\|^{\frac{3\ell}{4}}+\|D^2_{(x_1,M_{x_1}),(y,M_y)}F\|^{\frac{3\ell}{4}}\big)\\
& \quad \quad \times \big(\|D_{(x_2,M_{x_2})}F\|^{\frac{3\ell}{4}}+\|D^2_{(x_2,M_{x_2}),(y,M_y)}F\|^{\frac{3\ell}{4}}\big) |D_{(x_1,M_{x_1})}F_i|^{3/2} \, |D_{(x_2,M_{x_2})}F_i|^{3/2} \allowdisplaybreaks\\
& \leq \mathbb{P}(D^2_{(x_1,M_{x_1}),(y,M_y)}F\neq \0, D^2_{(x_2,M_{x_2}),(y,M_y)}F\neq \0)^{\frac{p-2}{4+p}} \\
& \quad \quad \times \big( \big(\E \|D_{(x_1,M_{x_1})}F\|^{4+p}\big)^{\frac{3\ell}{4(4+p)}}+\big(\E \|D^2_{(x_1,M_{x_1}),(y,M_y)}F\|^{4+p}\big)^{\frac{3\ell}{4(4+p)}}  \big) \\
& \quad \quad \times \big( \big(\E \|D_{(x_2,M_{x_2})}F\|^{4+p}\big)^{\frac{3\ell}{4(4+p)}}+\big(\E \|D^2_{(x_2,M_{x_2}),(y,M_y)}F\|^{4+p}\big)^{\frac{3\ell}{4(4+p)}}  \big) \\
& \quad \quad \times \big( \E |D_{(x_1,M_{x_1})}F_i|^{4+p}\big)^{\frac{3}{2(4+p)}}  \big( \E |D_{(x_2,M_{x_2})}F_i|^{4+p}\big)^{\frac{3}{2(4+p)}} \\
& \leq 4 m^{\frac{3\ell}{4}} c^{\frac{3(\ell+2)}{2(4+p)}} \mathbb{P}(D^2_{(x_1,M_{x_1}),(y,M_y)}F\neq \0, D^2_{(x_2,M_{x_2}),(y,M_y)}F\neq \0)^{\frac{p-2}{4+p}}.
\end{align*}
From H\"older's inequality and the previous estimates, we obtain that, for $p>2$,
\begin{align*}
\hat{\gamma}_5^3 & \leq 3\sum_{i,j=1}^m \int_{\mathbb{X}^3} 4^{2/3} \sqrt{m} c^{\frac{3}{4+p}} \mathbb{P}(D^2_{(x_1,M_{x_1}),(y,M_y)}F\neq \0, D^2_{(x_2,M_{x_2}),(y,M_y)}F\neq \0)^{\frac{2}{3}\frac{p-2}{4+p}} c^{\frac{2}{4+p}} \\
& \hskip 3cm \lambda^3(\dint(x_1,x_2,y))\\
& \quad + \sum_{i,j=1}^m \int_{\mathbb{X}^3} 2\sqrt{m} c^{\frac{1}{4+p}}\\
& \hskip 2cm \times \bigg( \frac{45}{2}  c^{\frac{2}{4+p}} \mathbb{P}(D^2_{(x_1,M_{x_1}),(y,M_y)}F_i\neq 0, D^2_{(x_2,M_{x_2}),(y,M_y)}F_i\neq 0)^{\frac{1}{3}\frac{p-2}{4+p}} c^{\frac{2}{4+p}}\\
& \hskip 2.5cm + \frac{9}{2} c^{\frac{2}{4+p}} \mathbb{P}(D^2_{(x_1,M_{x_1}),(y,M_y)}F_i\neq 0, D^2_{(x_2,M_{x_2}),(y,M_y)}F_i\neq 0)^{\frac{1}{3}\frac{p-2}{4+p}} c^{\frac{2}{4+p}} \bigg) \\
& \hskip 3cm \lambda^3(\dint(x_1,x_2,y)) \\
& \leq m^{5/2} \Gamma_4(c,p)^3
\end{align*}
and
\begin{align*}
\hat{\gamma}_6^4 & \leq 3\sum_{i,j=1}^m \int_{\mathbb{X}^3} 4^{2/3} m c^{\frac{4}{4+p}} \mathbb{P}(D^2_{(x_1,M_{x_1}),(y,M_y)}F\neq \0, D^2_{(x_2,M_{x_2}),(y,M_y)}F\neq \0)^{\frac{2}{3}\frac{p-2}{4+p}} c^{\frac{2}{4+p}} \\
& \hskip 3cm \lambda^3(\dint(x_1,x_2,y))\\
& \quad + \sum_{i,j=1}^m \int_{\mathbb{X}^3} 2m c^{\frac{2}{4+p}}\\
& \hskip 2cm \times \bigg( \frac{135}{8}  c^{\frac{2}{4+p}} \mathbb{P}(D^2_{(x_1,M_{x_1}),(y,M_y)}F_i\neq 0, D^2_{(x_2,M_{x_2}),(y,M_y)}F_i\neq 0)^{\frac{1}{3}\frac{p-2}{4+p}} c^{\frac{2}{4+p}}\\
& \hskip 2.5cm + \frac{27}{8} c^{\frac{2}{4+p}} \mathbb{P}(D^2_{(x_1,M_{x_1}),(y,M_y)}F_i\neq 0, D^2_{(x_2,M_{x_2}),(y,M_y)}F_i\neq 0)^{\frac{1}{3}\frac{p-2}{4+p}} c^{\frac{2}{4+p}} \bigg) \\
& \hskip 2.5cm \lambda^3(\dint(x_1,x_2,y)) \\
& \leq m^3 \Gamma_5(c,p)^4.
\end{align*}
This implies that
\begin{equation}\label{eqn:Gammas56}
\hat{\gamma}_5 \leq m^{5/6} \Gamma_4(c,p) \quad \text{and} \quad \hat{\gamma}_6 \leq m^{3/4} \Gamma_5(c,p).
\end{equation}
Combining the estimates in \eqref{eqn:Gammas1-4} and in \eqref{eqn:Gammas56} with the marked versions of Theorem \ref{thm:GeneralMultivariateBound} and Theorem \ref{thm:Generaldconvex} described at the beginning of this proof completes the proof of Theorem \ref{thm:generalStabilization}.
\end{proof}

\begin{proof}[Proof of Corollary \ref{coro:BoundedDifferenceOperator}]
We aim to apply Theorem \ref{thm:generalStabilization} without marks. We choose $s_0$ such that $\Sigma_s$ is positive definite for $s\geq s_0$ and such that $\|\Sigma_s\|_{op}$ and $\|\Sigma_s^{-1}\|_{op}$ are uniformly bounded for $s\geq s_0$. For $\lambda=s\mu$, the assumptions \eqref{eqn:AssumptionMomentBoundsD} and \eqref{eqn:AssumptionMomentBoundsD2} of Theorem \ref{thm:generalStabilization} are satisfied with $c=a/s^{3+\varepsilon/2}$ and $p=2+\varepsilon$. The assumptions \eqref{As11}, \eqref{As12}, and \eqref{As2} show that $\Gamma_j(c,p)$, $j\in\{1,\hdots,5\}$, are all of order $s^{-1/2}$. Together with $\Sigma=\Sigma_s$, this yields the conclusion of Corollary \ref{coro:BoundedDifferenceOperator}.
\end{proof}

\section*{Acknowledgements}

The authors are very thankful to Xiao Fang for bringing to their attention his PhD thesis \cite{FangPhD}, which inspired the part of the proof of Theorem \ref{thm:Generaldconvex} that deals with $J_{2,2}$. This led to a significant improvement of the previous version \cite{ArXivVersion}, where a weaker distance $d_{\mathbb{H}_\ell}$ based on the intersection of half-spaces was considered and where bounds for the $d_{convex}$-distance could  only be given under more restrictive assumptions. The second author gratefully acknowledges the generous support of the University of Bern, where some of this research was completed. Finally, we thank the referee for an attentive reading and for pointing out a typo in the statement of Theorem \ref{thm:Generaldconvex}.



\appendix

\section{Appendix: Malliavin calculus on the Poisson space}\label{sec:MalliavinCalculus}

We recall the definitions of the Malliavin operators as well as some of their relations. For more details we refer to, for example, \cite[Section 2]{LPS}.

We start with a pathwise product formula for the difference operator.

\begin{lemm}\label{lem:DProduct}
For Poisson functionals $F$ and $G$ and $x\in\mathbb{X}$,
$$
D_x(FG)=(D_xF) G + F (D_xG) + (D_xF) (D_xG).
$$
\end{lemm}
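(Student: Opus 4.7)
The plan is to unwind the definition of the difference operator at \eqref{eqn:DifferenceOperator} and do the direct algebraic expansion; no heavy machinery is needed.

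First, I would fix measurable representatives $f, g: \mathbf{N} \to \R$ of $F$ and $G$, so that $F = f(\eta)$ and $G = g(\eta)$ almost surely. The product $FG$ then admits the pointwise representative $\nu \mapsto f(\nu) g(\nu)$, and by definition
\[
D_x(FG) = f(\eta + \delta_x)\, g(\eta + \delta_x) - f(\eta)\, g(\eta) \quad \mathbb{P}\text{-a.s.}
\]

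Next, I would rewrite the two ``added atom'' evaluations using
\[
f(\eta + \delta_x) = F + D_xF, \qquad g(\eta + \delta_x) = G + D_xG,
\]
which is just a restatement of \eqref{eqn:DifferenceOperator}. Substituting and expanding the product gives
\[
(F + D_xF)(G + D_xG) = FG + F\,(D_xG) + (D_xF)\, G + (D_xF)(D_xG).
\]
Subtracting $FG$ yields the claimed identity. Since every equality above holds $\mathbb{P}$-a.s.\ for each fixed $x \in \mathbb{X}$, the conclusion follows.

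There is essentially no obstacle: the identity is purely algebraic once one recognizes that $D_x$ acts on products by the non-commutative-free ``discrete Leibniz rule'' induced by adding one atom. The only care needed is to select a single joint representative for $FG$ so that the two sides of the identity refer to the same $\omega$-by-$\omega$ random variable; this is why the statement is almost sure rather than pointwise on $\mathbf{N}$.
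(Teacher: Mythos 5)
The paper does not supply a proof of Lemma \ref{lem:DProduct}; it simply states it as a known pathwise identity. Your argument is the standard direct one — fix representatives, write $f(\eta+\delta_x)=F+D_xF$ and $g(\eta+\delta_x)=G+D_xG$, expand, and subtract $FG$ — and it is correct and complete, including the small but appropriate remark about choosing a single representative so that the identity is read as an almost-sure equality for each fixed $x$.
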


The second moment and the variance of a Poisson functional can be bounded in terms of the difference operator:

\begin{theo}[Poincar\'e inequality]\label{thm:Poincare}
For a Poisson functional $F$ with $\E |F|<\infty$,
$$
\E F^2 \leq \big( \E F \big)^2 + \E \int_{\mathbb{X}} (D_xF)^2 \, \lambda(\dint x).
$$
\end{theo}

For $n\in\N$ let us denote by $I_n(g)$ the multiple Wiener-It\^o integral of $g\in L^2(\lambda^n)$ with respect to the Poisson process $\eta$. Note that for $g\in L^2(\lambda^n)$, $n\in\N$, and $h\in L^2(\lambda^m)$, $m\in\N$,
\begin{equation}\label{eqn:OrthogonalityItoIntegrals}
\E I_n(g) I_m(h) = {\bf 1}\{n=m\} n! \int_{\mathbb{X}^n} g(x) h(x) \, \lambda^n(\dint x).
\end{equation}
Any square integrable Poisson functional $F$ has a so-called Wiener-It\^o chaos expansion
$$
F=\E F + \sum_{n=1}^\infty I_n(f_n),
$$
where the functions $f_n\in L^2(\lambda^n)$, $n\in\N$, are symmetric and $\lambda^n$-a.e.\ uniquely defined and the right-hand side converges in $L^2(\mathbb{P})$. Together with \eqref{eqn:OrthogonalityItoIntegrals} one sees that
$$
\Var F = \sum_{n=1}^\infty n! \|f_n\|^2_{n},
$$
where $\|\cdot\|_n$ denotes the usual norm in $L^2(\lambda^n)$ for $n\in\N$.

If $F\in\operatorname{dom} D$ (see \eqref{eqn:DomD}), the difference operator defined in \eqref{eqn:DifferenceOperator} satisfies the identity
$$
D_xF = \sum_{n=1}^\infty n I_{n-1}(f_n(x,\cdot)) \quad \mathbb{P}\text{-a.s.}
$$
for $\lambda$-a.e.\ $x\in\mathbb{X}$. Here, $f_n(x,\cdot)$ denotes the function in $n-1$ variables one obtains after fixing the first argument to be $x$. Moreover, $F\in\operatorname{dom} D$ is equivalent to
$$
\sum_{n=1}^\infty n \, n! \|f_n\|_n^2<\infty.
$$
The inverse Ornstein-Uhlenbeck generator of $F$ is given by
$$
L^{-1} F = -\sum_{n=1}^\infty \frac{1}{n} I_n(f)
$$
and is the pseudo-inverse of the Ornstein-Uhlenbeck generator $L$, which we do not need for our purposes. Next we present the definition of the Skorohod integral $\delta$. We say that a random function $g: \mathbb{X}\to\R$ depending only on $\eta$ such that
\begin{equation}\label{eqn:LPlambda}
\E \int_{\mathbb{X}} g(x)^2 \, \lambda(\dint x) <\infty
\end{equation}
belongs to $\operatorname{dom} \delta$ if
$$
g(x) = g_0(x) + \sum_{n=1}^\infty I_{n}(g_n(x,\cdot))
$$
for $\lambda$-a.e.\ $x\in\mathbb{X}$ with functions $g_n\in L^2(\lambda^{n+1})$, $n\in\N\cup\{0\}$, such that
$$
\sum_{n=0}^\infty (n+1)! \|\tilde{g}_{n}\|^2_{n+1}<\infty.
$$
Here, $\tilde{g}_n\in L^2(\lambda^{n+1})$ denotes the symmetrization
$$
\tilde{g}_{n}(x_1,\hdots,x_{n+1}) = \frac{1}{(n+1)!} \sum_{\pi\in\Pi(n+1)} g_n(x_{\pi(1)},\hdots,x_{\pi(n+1)})
$$
of $g_n$, where $\Pi(n+1)$ stands for the set of all permutations of $\{1,\hdots,n+1\}$. For $g\in\operatorname{dom} \delta$ the Skorohod integral $\delta(g)$ is defined as
$$
\delta(g)=\sum_{n=0}^\infty I_{n+1}(\tilde{g}_{n}),
$$
i.e., $\delta$ maps a random function to a random variable. The difference operator and the Skorohod integral are adjoint operators in the sense that they satisfy the following well-known integration by parts formula.

\begin{lemm}\label{lem:PartialIntegration}
For $F\in \operatorname{dom} D$ and $g\in \operatorname{dom}\delta$,
$$
\E \int_{\mathbb{X}} D_xF g(x) \, \lambda(\dint x) = \E F \delta(g).
$$
\end{lemm}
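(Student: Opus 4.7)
The plan is to prove the identity by expanding both sides in terms of the Wiener--It\^o chaos and checking that they agree chaos-by-chaos, using the orthogonality relation \eqref{eqn:OrthogonalityItoIntegrals} together with Fubini justified by the integrability assumptions on $F$ and $g$.

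First I would write $F = \E F + \sum_{n\ge 1} I_n(f_n)$ with symmetric kernels $f_n \in L^2(\lambda^n)$, so that $D_xF = \sum_{n\ge 1} n I_{n-1}(f_n(x,\cdot))$ $\mathbb{P}$-a.s.\ for $\lambda$-a.e.\ $x$, as recalled in the Appendix. Simultaneously, expand the random field $g$ as $g(x) = g_0(x) + \sum_{m\ge 1} I_m(g_m(x,\cdot))$ with $g_m \in L^2(\lambda^{m+1})$, so that $\delta(g) = \sum_{m\ge 0} I_{m+1}(\tilde g_m)$ where $\tilde g_m$ is the symmetrization of $g_m$.

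Next I would compute the left-hand side. By the definition of $\operatorname{dom}\delta$ at \eqref{eqn:LPlambda}, $g$ is square integrable against $\lambda\otimes\mathbb{P}$, and the chaos expansion of $D_xF$ converges in $L^2(\lambda\otimes\mathbb{P})$ because $F\in\operatorname{dom}D$ guarantees $\sum_{n\ge 1} n\cdot n!\|f_n\|_n^2 < \infty$. Hence the Cauchy--Schwarz inequality legitimizes a Fubini-type interchange, and for each $x$ the orthogonality \eqref{eqn:OrthogonalityItoIntegrals} yields
\begin{equation*}
\E\, D_xF\cdot g(x) \;=\; \sum_{n\ge 1} n\,(n-1)!\int_{\mathbb{X}^{n-1}} f_n(x,\vec y)\, g_{n-1}(x,\vec y)\, \lambda^{n-1}(\dint \vec y).
\end{equation*}
Integrating in $x$ against $\lambda$ and using the symmetry of $f_n$ in all $n$ arguments to replace $g_{n-1}$ by its symmetrization $\tilde g_{n-1}$, I obtain
\begin{equation*}
\E\int_{\mathbb{X}} D_xF\, g(x)\,\lambda(\dint x) \;=\; \sum_{n\ge 1} n!\int_{\mathbb{X}^n} f_n\,\tilde g_{n-1}\,\dint\lambda^n.
\end{equation*}

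Then I would compute the right-hand side in the same manner: inserting the two chaos expansions into $\E\, F\,\delta(g)$ and again invoking \eqref{eqn:OrthogonalityItoIntegrals} kills every cross term except those with matching order $n$ in both factors, giving
\begin{equation*}
\E\, F\,\delta(g)\;=\;\sum_{n\ge 1}\E\, I_n(f_n)\, I_n(\tilde g_{n-1})\;=\;\sum_{n\ge 1} n!\int_{\mathbb{X}^n} f_n\,\tilde g_{n-1}\,\dint\lambda^n,
\end{equation*}
which coincides with the expression obtained for the left-hand side. The only real obstacle is the bookkeeping needed to interchange the expectation, the $\lambda$-integral, and the infinite sums defining the two chaos expansions; this is handled by a single application of Cauchy--Schwarz together with the summability conditions $\sum_{n\ge 1} n\cdot n!\|f_n\|_n^2<\infty$ (from $F\in\operatorname{dom}D$) and $\sum_{n\ge 0}(n+1)!\|\tilde g_n\|_{n+1}^2<\infty$ (from $g\in\operatorname{dom}\delta$), which together dominate each of the rearrangements used above.
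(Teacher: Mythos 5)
Your chaos-by-chaos verification is correct and is exactly the standard proof of this identity; the paper itself offers no proof of Lemma \ref{lem:PartialIntegration}, presenting it as well known and deferring to \cite[Section 2]{LPS}, where the same expansion argument appears. The only point worth spelling out is the precise Cauchy--Schwarz step: $\sum_n n!\,\|f_n\|_n\,\|\tilde g_{n-1}\|_n \leq \big(\sum_n n!\,\|f_n\|_n^2\big)^{1/2}\big(\sum_m (m+1)!\,\|\tilde g_m\|_{m+1}^2\big)^{1/2}$, both factors being finite by $\E F^2<\infty$ and $g\in\operatorname{dom}\delta$, while the Fubini interchange on the left-hand side uses $\E\int (D_xF)^2\,\lambda(\dint x)<\infty$ together with \eqref{eqn:LPlambda}; with that made explicit, the argument is complete.
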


The following lemma (see \cite[Proposition 2.3 and Corollary 2.4]{LPS}) provides a criterion for $g$ belonging to $\operatorname{dom}\delta$ and an upper bound for the second moment of $\delta(g)$.

\begin{lemm}\label{lem:BoundDelta}
Let $g$ be a random function depending only on $\eta$ and satisfying \eqref{eqn:LPlambda} and
\begin{equation}\label{eqn:AssumptionDg}
\E\int_{\mathbb{X}^2} (D_yg(x))^2\, \lambda^2(\dint(x,y)) <\infty.
\end{equation}
Then, $g\in \operatorname{dom} D$ and
$$
\E \delta(g)^2 \leq \E \int_{\mathbb{X}} g(x)^2 \, \lambda(\dint x) + \E\int_{\mathbb{X}^2} (D_yg(x))^2\, \lambda^2(\dint(x,y)).
$$
\end{lemm}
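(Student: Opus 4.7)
The plan is to exploit the Wiener–It\^o chaos expansion of $g$ viewed as a family of square integrable Poisson functionals indexed by $x\in\mathbb{X}$. Under assumption \eqref{eqn:LPlambda}, for $\lambda$-a.e.\ $x\in\mathbb{X}$ the random variable $g(x)$ lies in $L^2(\mathbb{P})$, so it admits a chaos decomposition
$$
g(x) = g_0(x) + \sum_{n=1}^\infty I_n(g_n(x,\cdot)), \qquad g_0(x):=\E g(x),
$$
with symmetric kernels $g_n(x,\cdot)\in L^2(\lambda^n)$ for $n\geq 1$. By Fubini and the isometry \eqref{eqn:OrthogonalityItoIntegrals}, this identifies $g_n$ with an element of $L^2(\lambda^{n+1})$, and
\begin{equation}\label{eqn:planIsom1}
\E\int_{\mathbb{X}} g(x)^2\,\lambda(\dint x) = \sum_{n=0}^\infty n!\,\|g_n\|_{n+1}^2.
\end{equation}

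Next I would use the chaos-expansion formula for the difference operator, $D_y F = \sum_{n\geq 1} n\, I_{n-1}(f_n(y,\cdot))$, applied to the Poisson functional $g(x)$ to obtain, for $\lambda$-a.e.\ $x$,
$D_y g(x) = \sum_{n=1}^\infty n\, I_{n-1}(g_n(x,y,\cdot))$. Invoking \eqref{eqn:OrthogonalityItoIntegrals} twice and Fubini gives
\begin{equation}\label{eqn:planIsom2}
\E\int_{\mathbb{X}^2}(D_y g(x))^2\,\lambda^2(\dint(x,y))
= \sum_{n=1}^\infty n^2\,(n-1)!\,\|g_n\|_{n+1}^2
= \sum_{n=1}^\infty n\cdot n!\,\|g_n\|_{n+1}^2,
\end{equation}
which together with \eqref{eqn:AssumptionDg} shows that the right-hand side is finite.

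With both \eqref{eqn:planIsom1} and \eqref{eqn:planIsom2} at hand, I can verify membership of $g$ in $\operatorname{dom}\delta$ and bound $\E\delta(g)^2$ as follows. The symmetrization operator is a contraction on $L^2(\lambda^{n+1})$, so $\|\tilde g_n\|_{n+1}\leq \|g_n\|_{n+1}$, and therefore
$$
\sum_{n=0}^\infty (n+1)!\,\|\tilde g_n\|_{n+1}^2
\leq \sum_{n=0}^\infty (n+1)\cdot n!\,\|g_n\|_{n+1}^2,
$$
which is the sum of the right-hand sides of \eqref{eqn:planIsom1} and \eqref{eqn:planIsom2}, hence finite; this verifies $g\in\operatorname{dom}\delta$. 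Since $\delta(g)=\sum_{n\geq 0} I_{n+1}(\tilde g_n)$, the orthogonality of chaoses yields
$$
\E\delta(g)^2 = \sum_{n=0}^\infty (n+1)!\,\|\tilde g_n\|_{n+1}^2
\leq \sum_{n=0}^\infty n!\,\|g_n\|_{n+1}^2 + \sum_{n=1}^\infty n\cdot n!\,\|g_n\|_{n+1}^2,
$$
and substituting \eqref{eqn:planIsom1}–\eqref{eqn:planIsom2} gives the desired estimate.

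The only genuine subtlety is the measurability of $x\mapsto g_n(x,\cdot)$ and the identification of the kernels with elements of $L^2(\lambda^{n+1})$; this is routine via projection of $g$ onto the $n$-th Wiener chaos pointwise in $x$ and Fubini, which is why \eqref{eqn:LPlambda} is stated as an $\E\int$ rather than a pointwise condition. Everything else is bookkeeping: the factor $n+1=1+n$ after symmetrization is exactly what splits into the two terms on the right-hand side of the claim, so no fine estimate is needed beyond the contraction property of symmetrization.
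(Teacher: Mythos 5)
Your proposal is correct, and the conclusion ``$g\in\operatorname{dom}D$'' in the statement is a typographical slip for ``$g\in\operatorname{dom}\delta$'', which is what you establish. The paper does not prove this lemma itself but cites \cite[Proposition 2.3 and Corollary 2.4]{LPS}, and the argument there runs along the same chaos-expansion lines you follow: express $\E\int g^2\,\dint\lambda$ and $\E\iint (D_y g(x))^2\,\dint\lambda^2$ via the It\^o isometry as $\sum_n n!\,\|g_n\|_{n+1}^2$ and $\sum_n n\cdot n!\,\|g_n\|_{n+1}^2$, then control $\|\tilde g_n\|_{n+1}\leq\|g_n\|_{n+1}$ by the contraction property of symmetrization, so your proposal essentially reconstructs the cited proof.
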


\end{document}